\NeedsTeXFormat{LaTeX2e}
\include{BoxedEPS}

\documentclass[12pt]{amsart}
\usepackage{amsmath,amsthm}
\usepackage{graphicx,amsmath,amsfonts}




%


\chardef\bslash=`\\ 




\makeatletter
\def\verbatim{\interlinepenalty\@M \@verbatim
  \leftskip\@totalleftmargin\advance\leftskip2pc
  \frenchspacing\@vobeyspaces \@xverbatim}
\makeatother
\hfuzz1pc 



\newtheorem{thm}{Theorem}[section]
\newtheorem{cor}[thm]{Corollary}
\newtheorem{lem}[thm]{Lemma}
\newtheorem{prop}[thm]{Proposition}

\newtheorem{rem}[thm]{Remark} 



\numberwithin{equation}{section}




\newcommand{\ZZ}{{\mathbb Z}}
\newcommand{\RR}{{\mathbb R}}

\newcommand{\Rd}{ {\Bbb R}^d}
\newcommand{\Zd} {{\Bbb Z}^d}

\begin{document}
\bibliographystyle{plain}

\title[Nonlinear functional equations and   sampling] {Localized nonlinear functional equations
and two  sampling problems in signal processing
}

\author{ Qiyu Sun }

\address{
Sun: Department of Mathematics,  University of Central Florida,
Orlando, FL 32816}
\email{qiyu.sun@ucf.edu}
\thanks{Research of the  author was  supported in part by  the National Science Foundation
(DMS-1109063).}

\date{\today }

\subjclass{
47H07, 47J05, 65J15,  94A20, 94A12,  42C15, 46H99, 46T20,  47H05, 39B42}

\keywords{Nonlinear functional equation, strict monotonicity, inverse-closedness, Banach algebra, infinite matrix,
instantaneous companding,  average sampling,
 signal with finite rate of innovation, shift-invariant space}


\begin{abstract}
Let $1\le p\le \infty$. In this paper, we consider solving a nonlinear functional equation
 $$f(x)=y,$$
where $x, y$ %
belong to  $\ell^p$ and
$f$ has continuous bounded gradient
 in an inverse-closed  subalgebra of ${\mathcal B}(\ell^2)$, the Banach algebra of all bounded linear operators on the Hilbert space $\ell^2$. We introduce  strict monotonicity property for  functions $f$ on Banach spaces  $\ell^p$ so that the above nonlinear functional equation is solvable and
  the solution $x$ depends continuously on the given data $y$ in $\ell^p$. We show that the Van-Cittert iteration
 converges in $\ell^p$ with exponential rate and hence it could be used to locate the true solution of the above nonlinear functional equation. We apply the above theory 
 to handle two  problems in signal processing:  nonlinear sampling termed with instantaneous companding and subsequently average sampling; and
local identification of  innovation positions and qualification of amplitudes of signals  with finite rate of innovation.
\end{abstract}

\maketitle

\section{Introduction}

Let  $\ell^p:=\ell^p({\mathcal N}), 1\le p\le \infty$,
be  the  space of all real-valued $p$-summable column vectors  $x:=(x(n))_{n\in {\mathcal N}}$ on a countable index set ${\mathcal N}$
 with its  
norm   denoted by $\|\cdot\|_p:=\|\cdot\|_{\ell^p({\mathcal N})}$.
In the first part of this paper, we consider solvability and stability of a  
nonlinear functional equation
\begin{equation}\label{nfe.def}
f(x)=y
\end{equation}
in Banach spaces $\ell^p, 1\le p\le \infty$. We assume that both  the original signal (image) $x$
 to be recovered and the  distorted signal (image) $y$ to be observed belong to  $\ell^p, 1\le p\le \infty$. We also assume that the function $f$
in the nonlinear functional equation
\eqref{nfe.def}
 has continuous bounded gradient $\nabla f$  in an
inverse-closed Banach  algebra 
of infinite matrices with certain off-diagonal decay (hence being localized and near sparse). Here
the gradient $\nabla f$ of a  function $f$ on $\ell^p$ is defined by
$$\nabla f(x) c:=\lim_{t\to 0} \frac{ f(x+tc)-f(x)}{t} \quad {\rm for} \ x, c\in \ell^p.$$
Such a hypothesis is satisfied for   functions $f$ modeling a distortion procedure that has strong neighborhood dependency; i.e., the change in amplitude  of the distorted signal (image)  $f(x)$ at each position depends essentially on the change in amplitudes of the original signal (image) $x$
at neighboring  positions. 

For the nonlinear functional equation \eqref{nfe.def} in the Hilbert space $\ell^2$, a well-known result is  that it
 is solvable and stable 
 provided that the function  $f$ 
is
 strictly monotonic  on $\ell^2$  and has
     continuous bounded gradient   in  ${\mathcal B}(\ell^2)$, see for instance \cite{minty62, nonlinearbook}
or
Lemma \ref{inverse.lm}.
   Here ${\mathcal B}(\ell^p), 1\le p\le \infty$, is the Banach algebra of all bounded linear operators on $\ell^p$,
    and a function $f$ on $\ell^2$ is said to be {\em strictly monotonic} \cite{nonlinearbook}
if  there exists a positive constant $m_0$ such that
\begin{equation}\label{monotone.def}
(x-x')^T\big (f(x)-f(x')\big)\ge m_0 (x-x')^T (x-x')\quad {\rm for \ all} \  x, x'\in \ell^2.
\end{equation}
The {\em strict monotonicity constant} of a function $f$ on $\ell^2$ is the largest constant $m_0$ such that \eqref{monotone.def} holds.
If a function $f$ on $\ell^2$ has continuous gradient $\nabla f$ in ${\mathcal B}(\ell^2)$,
an equivalent formulation of its strict monotonicity
is that
\begin{equation}\label{monotoneequiv.def}
c^T \nabla f(x)\!\ c\ge m_0 c^T c \quad  {\rm for\ all} \ c\in \ell^2 \ {\rm and} \  x\in \ell^2.
\end{equation}

 In  this paper, we consider  the nonlinear functional equation \eqref{nfe.def} in Banach spaces $\ell^p, p\ne 2$, see Remark \ref{sfm.rem} and Section \ref{friinnovation.section}
for some of our motivations related to signal fidelity measures and local identification of innovations.
We  introduce the  following ``strict monotonicity"  property for functions on $\ell^p, 1\le p\le \infty$,
\begin{equation} \label{pmonotone.def}
 c^T \nabla f(x) c\ge m_0 c^T c\quad {\rm for \ all} \ c\in \ell^2\ {\rm and}\  x\in \ell^{p},
  \end{equation}
where $m_0>0$.
 We call  \eqref{pmonotone.def}  the strict monotonicity property  because
of its similarity to  the equivalent formulation \eqref{monotoneequiv.def} of the strict monotonicity for a function  on $\ell^2$.
We remark that the strict monotonicity  property \eqref{pmonotone.def} for functions on Banach spaces  could be
verified more easily than  the strong accretivity defined by Browder \cite{browder67} and Kato \cite{kato67}
 via normalized duality mapping, 
especially for functions arisen in nonlinear sampling and reconstruction.
We show in Theorems \ref{pinvertibility.tm3} and \ref{errorestimate.tm} that   the nonlinear functional equation \eqref{nfe.def} is solvable and stable
 in $\ell^p, 1\le p\le \infty$, if 
 $f$ has continuous bounded gradient
 in  some Banach subalgebra 
  of ${\mathcal B}(\ell^2)$ and satisfies the strict monotonicity  property \eqref{pmonotone.def}.
The proofs of Theorems \ref{pinvertibility.tm3} and \ref{errorestimate.tm} rely heavily
on  a nonlinear version of Wiener's lemma   (Theorem \ref{nonlinearwiener.tm}) and  a unique   Lipschitz extension
theorem (Theorem \ref{pinvertibility.tm})  for
strictly monotonic functions.

\bigskip

In the second part of this paper, we consider finding the true solution of
 the nonlinear functional equation
\eqref{nfe.def} with $1\le p\le \infty$. For a strictly monotonic function $f$ on $\ell^2$, 
consider the  Van-Cittert  iteration:
\begin{equation} \label{oldvancittert.def}
x_{n+1}=x_{n}-\alpha  (f(x_{n})-y), \quad   n\ge 0,
\end{equation}
where  $x_0\in \ell^2$ is the initial guess
and $\alpha>0$ is the relaxation factor. By the strict monotonicity property \eqref{monotoneequiv.def}, we obtain that
$$\|x_{n+1}-x_n\|_2^2\le \big(1-2 \alpha m_0+ \alpha^2\big (\sup_{x\in \ell^2} \|\nabla f(x)\|_{{\mathcal B}(\ell^2)}\big)\big)\|x_{n}-x_{n-1}\|_2^2\quad {\rm for \ all} \ n\ge 1.$$
 Thus we conclude that if $0<\alpha< 2 m_0/ (\sup_{x\in \ell^2} \|\nabla f(x)\|_{{\mathcal B}(\ell^2)})$
then for any initial guess $x_0$,  the sequence $x_n, n\ge 0$, in the  Van-Cittert  iteration \eqref{oldvancittert.def} converges exponentially
 to the true solution $f^{-1}(y)$  of the nonlinear functional equation \eqref{nfe.def}, i.e., there exist constants $C\in (0,\infty)$ and $r\in (0, 1)$ such that
  $$\|x_n-f^{-1}(y)\|_2\le C r^n \quad {\rm for \ all} \ n\ge 1.$$
In Theorem \ref{vancittertiteration.tm}, we borrow a technique  in \cite{suncasp05, suntams07} used for establishing  Wiener's lemma for infinite matrices,
 and we show that the sequence $x_n, n\ge 0$,  in the
Van-Cittert  iteration method converges to the true solution $f^{-1}(y)$ in $\ell^p, 1\le p\le \infty$,  exponentially,
 provided that
$0<\alpha< m_0/(  \sup_{x\in \ell^\infty} \|\nabla f(x)\|_{{\mathcal B}(\ell^2)}+\sup_{x\in \ell^\infty} \|\nabla f(x)\|_{{\mathcal B}(\ell^2)}^2) $
 and
 the function $f$  has  continuous bounded gradient in some inverse-closed
Banach algebra ${\mathcal B}$ satisfying the differential norm property \eqref{paracompactcondition}.
 Therefore  the
Van-Cittert  iteration method
 could be applied to locate the true solution of the (infinite-dimensional) nonlinear functional equation \eqref{nfe.def} with $p\ne 2$.
We are working on other 
 methods that could solve the nonlinear functional equation \eqref{nfe.def} with
 $p\ne 2$ in a more stable and efficient way. 

\bigskip
In the third part of this paper, we consider applying
 the above theory about solving the nonlinear functional equation \eqref{nfe.def}
  to  two sampling problems in  signal processing.
We first apply it to  a nonlinear sampling problem
related to
instantaneous companding
\begin{equation} \label{companding.def} F:\ h(t)\longmapsto F(h(t)),\end{equation}
see Figure \ref{einstein.fig}.
The  technique to transmit (process) the companded signal  $F(h(t))$ instead of the original signal $h(t)$
 is  useful in a transmission
system  that does not respond well to very high or very low
signal levels, since that instantaneous companding technique  may amplify the weaker parts of a signal  and de-emphasize its stronger parts.
\begin{figure}[hbt]
\centering
\begin{tabular}{cc}
         \includegraphics[width=70mm]{./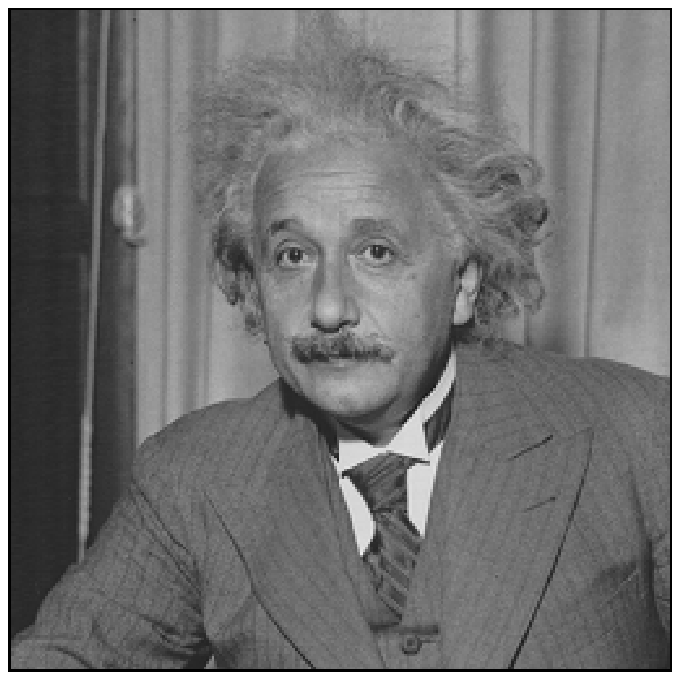} & \includegraphics[width=70mm]{./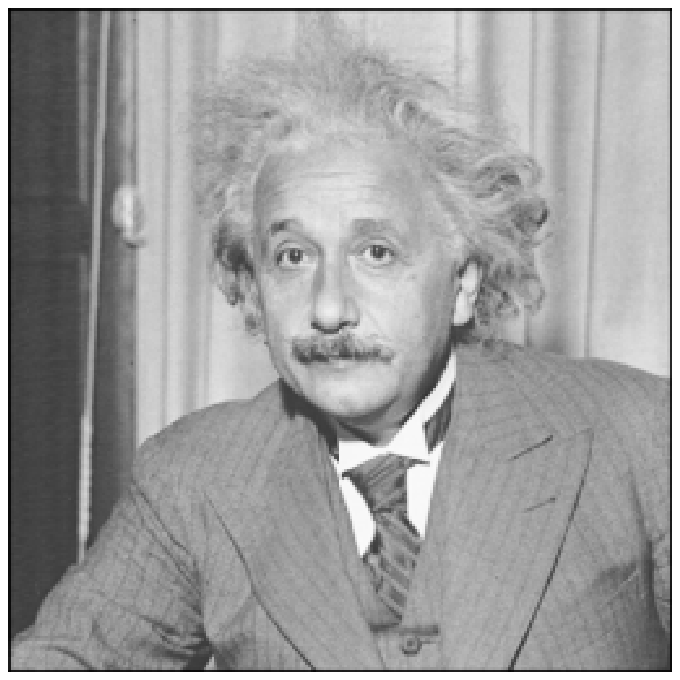}
   \end{tabular}
\caption{\small The left hand side is the original image, while the right hand side is the distorted image via $F(t)= 255\sin( \pi t/510)$.}
\label{einstein.fig}
\end{figure}
There are some theoretical obstacles in the path of instantaneous companding.
For instance,
the instantaneous companding \eqref{companding.def} does not preserve the bandlimitedness. 
A good news  by Beurling \cite{landau61} is that
 no information of a signal $h(t)$  bandlimited to $[-\pi\Omega, \pi\Omega], \Omega>0$, is lost
   by transmitting the companded signal $F(h(t))$ over 
   bandlimiting channel
   $[-\pi \Omega,\pi \Omega]$, provided  that the companding function $F$  is monotonic increasing. 
Precisely,
the procedure  of instantaneous companding and subsequently  bandlimiting,
 \begin{equation*} 
 B_{\pi\Omega}\ni h\overset{\rm instantaneous\ companding}\longrightarrow F(h) \overset{\rm bandlimiting}\longrightarrow P_{\pi\Omega} (F(h))\in B_{\pi\Omega},\end{equation*}
 is one-to-one on $B_{\pi\Omega}$ \cite{landau61, sandberg94}. Here    $B_{\pi\Omega}$ is the Paley-Wiener space containing all
 square-integrable functions  bandlimited to $[-\pi\Omega, \pi\Omega]$,
    $P_{\pi\Omega}$ is the projection operator from $L^2$ to the Paley-Wiener space $B_{\pi\Omega}$, and
 $L^p:=L^p(\Rd), 1\le p\le \infty$, is the space of all $p$-integrable functions on $\Rd$ with
its 
norm  denoted by $\|\cdot\|_p:=\|\cdot\|_{L^p(\Rd)}$, while the inner product on $L^2$
 is denoted by $\langle \cdot, \cdot\rangle$.

   In this paper,
    we consider  a nonlinear sampling procedure of instantaneous  companding
    and subsequently  average sampling \cite{eldar08}:
      \begin{equation}\label{sfpsi.def}
    S_{F, \Psi}: \ h\overset{\rm instantaneous\ companding }
    \longrightarrow F(h) 
   \overset{\rm average\ sampling
 }\longrightarrow  \langle F(h), \Psi\rangle, 
     \end{equation}
  where $\Psi=(\psi_\gamma)_{\gamma\in \Gamma}$ is the average sampler,
   $\Gamma\subset \Rd$ is the index set representing
the location of the nonideal acquisition devices, and
the sampling functional $\psi_\gamma$
  at the sampling position $\gamma\in \Gamma$ reflects the
characteristics of the  acquisition device  at that 
location.
  Such a  nonlinear sampling procedure 
  appears in  a variety of setups and applications, such as
 charge-coupled device  image sensors \cite{Holst96, Kawai95}, power electronics \cite{Pejovic95}, radiometric photography,
magnetic resonance imaging \cite{Faris01,  Kose90, Shaf04}, and band-limited signal transmission
 \cite{landau61, sandberg94}.
 It would be an ill-posed problem to reconstruct a signal $h$ from its
 nonlinear sampling data $S_{F, \Psi}(h)$ 
 if
 no a priori information  on the original signal $h$ is given.
 In this paper, we assume that  signals  to be recovered
are superposition of impulse response of varying positions and amplitudes,
\begin{equation}\label{firstmodel.def}
h(t)=\sum_{\lambda\in \Lambda} c(\lambda)\phi_\lambda(t),\end{equation}
where $\Lambda$ is the index set with each $\lambda\in \Lambda$ representing an  innovation position  of the signal,
   $\Phi:=(\phi_\lambda)_{\lambda\in \Lambda}$ is the generator with each entry $\phi_\lambda$ being the impulse response of the signal generating
device at the  location $\lambda$,
and $c:=(c(\lambda))_{\lambda\in \Lambda}$ is the amplitude vector with each entry $c(\lambda)$ being the amplitude of the signal  at  the innovation position $\lambda$, which is  assumed
to be $p$-summable for some $1\le p\le \infty$ in this paper. In other words, signals to be recovered reside in the  space
 \begin{equation}\label{vp.def}
 V_p(\Phi):=\{c^T\Phi |\ c\in \ell^p(\Lambda)\}, \ 1\le p\le\infty
  \end{equation}
(\cite{sunaicm08}).
We remark that signals, such as bandlimited signals in a Paley-Wiener space, time signals in a finitely-generated shift-invariant space, time-frequency signals in Gabor
analysis, and signals in a reproducing kernel subspace of $L^p$,   have the parametric representation \eqref{firstmodel.def}
\cite{nashedsun10, sunsiam06, sunaicm08}.

 Associated with the nonlinear sampling procedure \eqref{sfpsi.def} for signals in the space $V_p(\Phi)$ is the function
 \begin{equation}
 \label{ffphipsi.def} f_{F, \Phi, \Psi}:\ \ell^p(\Lambda)\ni c\longrightarrow  c^T\Phi\
  \overset{\rm instantaneous\ companding \ and \  average\ sampling
 }\longrightarrow  \langle F(c^T\Phi), \Psi\rangle\in \ell^p(\Gamma). 
 \end{equation}
For the function $f_{F, \Phi, \Psi}$  just defined, 
its gradient function $$\nabla f_{F, \Phi, \Psi}(x):\ \ell^p(\Lambda)\longmapsto \ell^p(\Gamma), \ x\in \ell^p(\Lambda)$$
is a family of infinite matrices $(a_{\lambda,\gamma}(x))_{\lambda\in \Lambda, \gamma\in \Gamma}$
 with uniform off-diagonal decay, provided that
 the companding function $F$ has continuous bounded gradient and that both the generator $\Phi$ and the average sampler $\Psi$
 are well-localized,  such as  when they have polynomial decay of order $\beta>d$,
 see Lemma \ref{frirecovery.lem1}. Here we say that $\Phi=(\phi_\lambda)_{\lambda\in \Lambda}$ has {\em polynomial decay} of order $\beta$ if
   \begin{equation} \label{polynomialdecay.def}
  \|\Phi\|_{\infty,\beta}:= \sup_{\lambda\in \Lambda} \|\phi_\lambda(\cdot) (1+|\cdot-\lambda|)^\beta\|_\infty<\infty.
   \end{equation}
 So  we may apply the  established solvability and stability of
 the nonlinear functional equation \eqref{nfe.def} to study the stability of
 the nonlinear  sampling procedure \eqref{sfpsi.def}, and the  Van-Cittert iteration method to reconstruct
 signals $h\in V_p(\Phi)$ from its nonlinear sampling data $\langle F(h), \Psi\rangle$, 
 see Theorems \ref{frirecovery.tm1} and \ref{frirecovery.tm2} for the theoretical results and Section \ref{fri3.subsection} for the numerical simulations.
The readers may refer to \cite{eldar08, Faktor10} for the study of the
 nonlinear  sampling procedure \eqref{sfpsi.def}  from an engineering viewpoint.

\bigskip

A signal is said to have {\em finite rate of
innovation} if it has finitely many degrees of freedom per unit of time \cite{vetterli02}.
In the second application of our theory about solving the nonlinear functional  equation \eqref{nfe.def}, we consider
precise identification of  innovation positions  and  accurate qualification of amplitudes
of signals with finite rate of innovation. 
These are important to reach meaningful conclusions
in many applications, such as global positioning system, ultra wide-band
communication, and mass spectrometry.
The readers may refer to \cite{bns09, dvb07,  kusuma03,   mv05, mvb06,  michaeli11, sd07, 
vetterli02} and references therein for
various techniques, such as the annihilating filter technique and the polynomial reproducing technique,
that have been developed by Vetterli and his school to attack that challenging
problem.

Signals living in  the space $V_p(\Phi)$ have finite rate of innovation provided that
the generator $\Phi=(\phi_\lambda)_{\lambda\in \Lambda}$ is well-localized (for instance, $\Phi$ has  polynomial decay of order $\beta>d$) and
the index set $\Lambda$ is
a   relatively-separated subset of $\Rd$ \cite{sunaicm08}. Here
 a subset $\Lambda$ of $\Rd$  is  said to be {\em relatively-separated} if
 \begin{equation} \label{rsset.def}
\sup_{t\in \Rd} \sum_{\lambda\in \Lambda} \chi_{\lambda+[0,1)^d}(t)<\infty,
\end{equation}
where $\chi_E$ is a characteristic function on a measurable set $E$.
In applications such as global positioning system, cellular radio, ultra wide-band
communication, and mass spectrometry \cite{coombes05, win02,   donoho06, kusuma03}, signals  have
the parametric  representation \eqref{firstmodel.def} but
 both the innovation position set $\Lambda$  and the amplitude vector  $c:=(c(\lambda))_{\lambda\in \Lambda}$
need to be determined.
The dependence of the impulse response $\phi_\lambda$ at the  innovation position $\lambda$  could be given or determined.
For instance, the impulse response $\phi_\lambda$ is assumed to be approximately the shift of a given impulse $\varphi$ (i.e., $\phi_\lambda\approx \varphi(\cdot-\lambda)$) in \cite{coombes05, win02, kusuma03},
and
  the dilated shift of a given shape $\varphi$ with unknown width $w_\lambda$
  (i.e., $\phi_\lambda \approx \varphi((\cdot-\lambda)/w_\lambda)$ ) in \cite{donoho06}.
In this paper, we assume that the dependence of
the impulse response $\phi_\lambda$ on the  innovation position $\lambda$  is the shift of a given impulse response $\varphi$, i.e.,
  $\phi_\lambda=\varphi(\cdot-\lambda)$. So
 signals 
 have the following
 parametric representation
  \begin{equation}\label{secondmodel.def}
h(t)=\sum_{\lambda\in \Lambda} c(\lambda) \varphi(\cdot-\lambda).\end{equation}
 We could also say that signals live in an {\em almost-shift-invariant space} generated by $\varphi$, as in the Fourier domain
\begin{equation} \label{secondmodelfourier.def}\hat h(\xi)=C(\xi)\hat \phi(\xi)\end{equation} is the product of an almost periodic function
$C(\xi):=\sum_{\lambda\in \Lambda} c(\lambda) e^{-i\lambda \xi}$
 and the Fourier transform $\hat \varphi$ of the impulse response $\varphi$.
Here
the {\em Fourier transform}  $\hat f$ of an integrable function $f$  on $\Rd$ is defined by
$ \hat f(\xi)=\int_{\Rd} e^{-ix\xi} f(x) dx$.

In Section \ref{friinnovation.section}, 
we consider recovering a signal $h$
in the parametric representation \eqref{secondmodel.def} accurately from  its nonlinear  sampling data
$\langle F(h), \Psi\rangle$, with a priori approximate information on  innovation positions 
 and
 amplitudes. 
For that purpose, we require that the linearization
of the   nonlinear sampling process
\begin{equation} \label{localnonlinearsampling.def}
\Big(\begin{matrix}\Lambda\\  c\end{matrix}\Big) 
\overset{\rm parametric\ representation \ \eqref{secondmodel.def}}
 \longrightarrow
h 
\overset{\rm 
companding\ and \ 
sampling}  \longrightarrow
\langle F(h), \Psi\rangle
\end{equation}
is stable at
the given  approximate innovation position vector $\Lambda_0$ and
amplitude vector $c_0=(c_0(\lambda))_{\lambda\in \Lambda_0}$; i.e.,
there exist positive constants $A$ and $B$ such that
\begin{equation}\label{slambda0.stability}
A\|e\|_2\le \|S_{\Lambda_0, c_0} e\|_2\le B\|e\|_{2}\quad {\rm for \ all} \ e\in (\ell^2(\Lambda_0))^{d+1},
\end{equation}
where
\begin{equation}\label{slambda0.def}
S_{\Lambda_0, c_0}:=\left (\begin{array}{c}
-c_0(\lambda) \langle F'(h_0)  \nabla \varphi(\cdot-\lambda), \psi_\gamma\rangle\\
\langle F'(h_0) \varphi(\cdot-\lambda), \psi_\gamma\rangle\end{array}\right )_{\gamma\in \Gamma, \lambda\in \Lambda_0}\end{equation}
and $h_0=\sum_{\lambda\in \Lambda_0} c_0(\lambda) \varphi(\cdot-\lambda)$.
Associated with the linearization  of the   nonlinear sampling process \eqref{localnonlinearsampling.def} at $(\Lambda_0, c_0)$ is the function 
\begin{equation} \label{flambda0.def}
f_{\Lambda_0, c_0, \Psi}:\  
\Big(\begin{matrix} \sigma\\ c\end{matrix}\Big)\longmapsto 
 (S_{\Lambda_0, c_0}^T S_{\Lambda_0, c_0})^{-1} S_{\Lambda_0, c_0}^T
\langle F(h)-F(h_0), \Psi\rangle,
\end{equation}
where  $\sigma=(\sigma(\lambda))_{\lambda\in \Lambda_0}, c=(c(\lambda))_{\lambda\in \Lambda_0}$ and $h=\sum_{\lambda\in \Lambda_0} (c_0(\lambda)+c(\lambda)) \varphi(\cdot-\lambda-\sigma(\lambda))$.
We observe  that the gradient of  the function $f_{\Lambda_0, c_0, \Psi}$
is a  family of infinite matrices with certain off-diagonal decay, and that
$f_{\Lambda_0, c_0, \Psi}$ has strict monotonicity property in a small neighborhood of the origin; i.e.,
\begin{equation*}
\inf_{\|e\|_{(\ell^2(\Lambda_0))^{d+1}}=1} e^T \nabla f_{\Lambda_0, c_0, \Psi} \Big(\begin{matrix} \sigma\\  c\end{matrix}\Big)  e>0
\end{equation*}
for all $\sigma\in (\ell^\infty(\Lambda_0))^d$ and $ c\in \ell^\infty(\Lambda_0)$ with $\|c\|_\infty+\|\sigma\|_\infty\le \delta_0$,
where $\delta_0>0$ is a sufficiently small number.
Thus we may apply our theory
about solving the nonlinear functional equation \eqref{nfe.def} with $p=\infty$ indirectly to local identification of
innovation  positions  and qualification of  amplitudes, see Theorem  \ref{friidentification.tm}.

Finally we consider the following natural questions: 1) how to find approximate innovation positions and amplitudes
from the given sampling data and  2) how to verify the stability condition
\eqref{slambda0.stability} for the linearization  matrix. 
From  the stability  condition  \eqref{slambda0.stability} 
it follows  that  approximate innovation positions should be separated  from each other and that
approximate amplitudes at  innovation positions should be above certain level.
So we may model those  signals as  superposition of impulse response of
{\em active} and {\em nonactive} generating devices located at an unknown neighborhood of a uniform grid;
i.e.,
after appropriate scaling,  we may assume that signals  live in a
perturbed shift-invariant space  
\begin{equation}\label{vinfinity0.def}
V_{\infty,\oslash }(\varphi; \sigma):=\Big\{\sum_{k\in \Zd} c(k) \varphi(\cdot-k-\sigma(k)) \ \big| \ (c(k))_{k\in \Zd}\in \ell^\infty_{\oslash}(\Zd)\Big\}
\end{equation}
with {\em unknown}  perturbation $\sigma:=
(\sigma(k))_{k\in \Zd}$, where
\begin{equation}\label{linftyzero.def}
\ell^\infty_{\oslash}(\Zd)=\Big\{c:=(c(k))_{k\in \Zd}  \big |\
 \|c\|_{\ell^\infty_\oslash}:= \sup_{c(k)\ne 0} |c(k)|+ |c(k)|^{-1}<\infty\Big\}.\end{equation}
We may assume that signals live in the infinite unions of almost-shift-invariant spaces
$\cup_{\sigma} V_{\infty,\oslash }(\varphi; \sigma)$, see \cite{dolu08} for sampling in finite unions of subspaces.
A negative result for sampling in the  perturbed shift-invariant space
 with unknown perturbations is that not all signals in such a space
can be recovered
from their samples provided that
 $\varphi$ 
satisfies the popular and traditional Strang-Fix condition. The reason is that
 in this case, one cannot determine the perturbation $\sigma_0$ of signals $\sum_{k\in\Zd}
\varphi(\cdot-k-\sigma_0), \sigma_0\in \Rd$, 
as they have  same constant amplitudes and are identical. 
In Theorem  \ref{blindsampling.tm}, we  provide a positive result for
 sampling in a perturbed shift-invariant space
 with unknown perturbations. We show that any signal $h
$ 
in a perturbed shift-invariant space $V_{\infty, \oslash}(\varphi; \sigma)$  with small perturbation error $\|\sigma\|_\infty$ 
can be  recovered from its  sampling data
$\{\langle h, \psi_m(\cdot-k)\rangle| 1\le m\le M, k\in \Zd\}$ provided that
\begin{equation}\label{blindcondition} {\rm rank}
\begin{pmatrix} [\widehat{\nabla \varphi}, \widehat\psi_1](\xi) & \cdots & [\widehat{\nabla \varphi}, \widehat\psi_M](\xi)
\\
[\hat\varphi, \widehat\psi_1](\xi) & \cdots & [\hat{\varphi}, \widehat\psi_M](\xi)
\end{pmatrix}=d+1 \quad {\rm for \ all} \ \xi\in [-\pi, \pi]^d,
\end{equation}
see  Remark \ref{blindsampling.averagesamplingremark} for the connection of the above assumption with sampling in shift-invariant spaces.
Here  the {\em bracket product} $[f, g]$
of two square-integrable functions $f$ and $g$ is given by
$[f, g](\xi)=\sum_{l\in \Zd} f(\xi+2l\pi) \overline{ g(\xi+2l\pi)}$.

\bigskip
The paper is organized as follows. The first part of this paper contains four sections concerning 
 solvability and stability of the 
nonlinear functional equation \eqref{nfe.def}. The starting point of  solvability
 is  Wiener's lemma for strictly monotone functions in Section \ref{wiener.section}.
 A sufficient condition for the solvability of the nonlinear functional equation \eqref{nfe.def} in $\ell^p$
is introduced in Section \ref{invertibility.section}, while
 the unique Lipschitz extension theorem  in Section \ref{extension.section} and  Wiener's lemma in Section \ref{wiener.section} are crucial in the proof. The stability of the nonlinear functional equation \eqref{nfe.def} in $\ell^p$
is studied in Section \ref{error.section}.
 The central pieces of the second part of this paper  are
 the global  exponential convergence of the Van-Cittert iteration  in $\ell^p$
  and the local R-quadratic convergence of the quasi-Newton iteration  in $\ell^p, 1\le p\le \infty$, see Section \ref{algorithm.section} for details.
Our proof of the global exponential convergence depends heavily on the paracompactness idea used in \cite{suncasp05, suntams07} to establish Wiener's lemma for infinite matrices.
In the third part of this paper, we apply  solvability and stability
 of the nonlinear functional equation \eqref{nfe.def} in the first part
and  numerical implementation in the second part to two sampling problems in signal processing:
1)  stable recovery of signals $h$ living
 in the space $V_p(\Phi)$  from  their nonlinear sampling data $\langle F(h), \Psi\rangle$ in
Sections \ref{fri.section}; and  2)  local identification of innovation positions and qualification of amplitudes of signals $h$ having the parametric representation \eqref{secondmodel.def} from their nonlinear sampling data $\langle F(h), \Psi\rangle$
in Section  \ref{friinnovation.section}.


In this paper, the capital letter $C$ denotes an absolute constant, which may be different at different occurrences.

\section{Nonlinear Wiener's lemma}\label{wiener.section}

A Banach subalgebra ${\mathcal A}$ of ${\mathcal B}$ is said to be  {\em
inverse-closed}   if an element in ${\mathcal A}$, that is invertible in ${\mathcal B}$, is also invertible in ${\mathcal A}$.
Inverse-closedness occurs  in many fields of mathematics
under various names, such as spectral invariance, Wiener pair, and local subalgebra.
Inverse-closedness (= Wiener's lemma)  has been established
for infinite matrices satisfying
various off-diagonal decay conditions (or equivalently for localized linear functions on $\ell^p$), see for instance \cite{balan,
 baskakov90, gkwieot89, grochenigklotz10,
gltams06,  jaffard90, shincjfa09, sjostrand94, suncasp05,
suntams07,  sunca11} and   the  survey papers \cite{grochenigsurvey, Krishtal11}.
 Wiener's lemma for infinite matrices plays  crucial  roles for  well-localization of dual wavelet frames and  Gabor
frames \cite{balanchl04, gkwieot89, jaffard90, krishtal08},  algebra of pseudo-differential operators
\cite{grochenig06, grochenigs07, sjostrand94},
fast numerical implementation  \cite{christensen05, dahlkejat10, grochenigr10},
stable signal recovery 
\cite{akramjfa09, bns09, grochenigr10, halljin10,  sunsiam06, sunaicm08}, and optimization \cite{mjieee08, mjieee09}.

A quantitative version of inverse-closedness is to admit {\em norm control}, which is fundamental
in our study of   the  nonlinear functional equation \eqref{nfe.def} and its applications to sampling in signal processing.
Here an inverse-closed Banach subalgebra ${\mathcal A}$   of ${\mathcal B}$ is said to
 admit  norm control in ${\mathcal B}$
if there exists a continuous function $h$ from $[0, \infty)\times [0, \infty)$ to $[0, \infty)$ such that
$$\|A^{-1} \|_{\mathcal A}\le h(\|A\|_{\mathcal A}, \|A^{-1}\|_{\mathcal B})$$
for all $A\in {\mathcal A}$ with $A^{-1}\in {\mathcal B}$.
Unlike the inverse-closedness,  there are not many papers devoted to
the above quantitative version of inverse-closedness  \cite{grochenigklotz10, grochenigklotz12, nikolski99, tao05}.
The algebra ${\mathcal W}$ of commutative infinite matrices of the form $A:=(a(i-j))_{i,j\in \ZZ}$ with norm $\|A\|_{\mathcal W}=\sum_{j\in \ZZ} |a(j)|$
 is inverse-closed in ${\mathcal B}(\ell^2)$ by the classical Wiener's lemma \cite{wiener}
but it does not admit norm control \cite{nikolski99}.
The  $C^*$-subalgebras ${\mathcal A}$ of ${\mathcal B}$ with a common unit and a  differential norm, i.e.,
\begin{equation}\label{differentialnorm} \|AB\|_{\mathcal A}\le C (\|A\|_{\mathcal A}\|B\|_{\mathcal B}+ \|A\|_{\mathcal B}\|B\|_{\mathcal A})\quad {\rm for \ all} \ A, B\in {\mathcal A}\subset {\mathcal B},
\end{equation}   was shown in \cite{grochenigklotz10} to admit norm control.
The smoothness described in the above differential subalgebra ${\mathcal A}$ of ${\mathcal B}$ has been widely used in operator theory and non-commutative geometry \cite{blackadarcuntz91, kissin94,rieffel10} and also appears in approximation theory \cite{devore93}.
For the  quantitative version of inverse-closedness, we will show in Proposition \ref{wienerlmforinfinitematrices.prop}
that a Banach subalgebra ${\mathcal A}$  of ${\mathcal B}(\ell^2)$
 admits norm control in ${\mathcal B}(\ell^2)$ if
 it contains the identity matrix $I$,
it is closed under the transpose operation,  and it satisfies
\begin{equation}\label{paracompactcondition} 
\|AB\|_{\mathcal A}\le C_0 \big( \|A\|_{\mathcal A} \|B\|_{\mathcal A}^\theta \|B\|_{{\mathcal B}(\ell^2)}^{1-\theta}
  + \|B\|_{\mathcal A} \|A\|_{\mathcal A}^\theta \|A\|_{{\mathcal B}(\ell^2)}^{1-\theta}\big)
%
\quad {\rm for \ all}\
A, B\in {\mathcal A}\subset {\mathcal B}(\ell^2),\end{equation}
 where $C_0\in (0, \infty)$ and  $\theta\in [0, 1)$. The above inequality \eqref{paracompactcondition}, a weak form
of the differential norm property \eqref{differentialnorm},
is satisfied by many families of Banach subalgebras of ${\mathcal B}(\ell^2)$ \cite{gltams06,  jaffard90, suncasp05,
suntams07,  sunca11},  and it will also be  used  later to
 guarantee the  exponential convergence of the  Van-Cittert iteration method, see Theorem \ref{vancittertiteration.tm}.
\smallskip

The main topic of this section is to introduce   Wiener's lemma for strictly monotonic functions, see Theorem \ref{nonlinearwiener.tm}. It states that
 a  strictly monotonic function  on $\ell^2$
 with  continuous bounded  gradient   in some inverse-closed subalgebra
 of ${\mathcal B}(\ell^2)$ has
its inverse with continuous bounded gradient
 in  the same Banach subalgebra.
The above nonlinear Wiener's lemma  will be used later  to establish the  unique extension theorem (Theorem \ref{pinvertibility.tm})
 and the invertibility theorem (Theorem \ref{pinvertibility.tm3}) for functions with
 continuous bounded gradient in a Banach subalgebra of ${\mathcal B}(\ell^2)$.

\begin{thm}\label{nonlinearwiener.tm} 
Let ${\mathcal A}$  be a Banach subalgebra of ${\mathcal B}(\ell^2)$   that admits norm control in ${\mathcal B}(\ell^2)$.
 If
 $f$ is a strictly monotonic function on $\ell^2$ such that its gradient $\nabla f$
 is bounded and continuous in ${\mathcal A}$ (i.e.,
$
\sup_{x\in \ell^2}\|\nabla f(x)\|_{{\mathcal A}}<\infty
$
and
$
\lim_{x'\to x \ {\rm in } \ \ell^2} \|\nabla f(x')-\nabla f(x)\|_{{\mathcal A}}=0$ for all $x\in \ell^2$),
then  $f$ is invertible and  its inverse $f^{-1}$ has continuous bounded gradient
 $\nabla (f^{-1})$ in ${\mathcal A}$.
%
\end{thm}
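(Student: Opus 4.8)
The plan is to build $f^{-1}$ and its gradient out of the pointwise inverses $(\nabla f(x))^{-1}$: strict monotonicity will invert each $\nabla f(x)$ in ${\mathcal B}(\ell^2)$, and norm control will push these inverses into ${\mathcal A}$ with uniform bounds. First I would record the $\ell^2$ picture. Since ${\mathcal A}\subset{\mathcal B}(\ell^2)$, the hypotheses say in particular that $\nabla f$ is continuous and bounded in ${\mathcal B}(\ell^2)$, so by \lemref{inverse.lm} strict monotonicity makes $f$ a bijection of $\ell^2$ whose inverse is globally Lipschitz,
$$\|f^{-1}(y)-f^{-1}(y')\|_2\le m_0^{-1}\|y-y'\|_2\quad\text{for all }y,y'\in\ell^2.$$
Moreover the equivalent formulation \eqref{monotoneequiv.def} says that the symmetric part of each $\nabla f(x)$ dominates $m_0 I$, so $\|\nabla f(x)c\|_2\ge m_0\|c\|_2$, and the same lower bound holds for $(\nabla f(x))^T$ since $c^T(\nabla f(x))^Tc=c^T\nabla f(x)c$. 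Hence every $\nabla f(x)$ is invertible in ${\mathcal B}(\ell^2)$ with $\|(\nabla f(x))^{-1}\|_{{\mathcal B}(\ell^2)}\le m_0^{-1}$, uniformly in $x$.

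Next I would move these inverses into ${\mathcal A}$. As $\nabla f(x)\in{\mathcal A}$ and $(\nabla f(x))^{-1}\in{\mathcal B}(\ell^2)$, inverse-closedness (implicit in the norm-control hypothesis) gives $(\nabla f(x))^{-1}\in{\mathcal A}$, and norm control converts the two uniform bounds above into a uniform ${\mathcal A}$-bound: with $M:=\sup_{x}\|\nabla f(x)\|_{\mathcal A}<\infty$ and continuity of the control function $h$ on the compact rectangle $[0,M]\times[0,m_0^{-1}]$,
$$\sup_{x\in\ell^2}\|(\nabla f(x))^{-1}\|_{\mathcal A}\le \max_{[0,M]\times[0,m_0^{-1}]}h<\infty.$$
I would then identify the gradient of $f^{-1}$ by an inverse-function argument. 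Continuity of $\nabla f$ in ${\mathcal B}(\ell^2)$ makes $f$ continuously Fr\'echet differentiable; writing $x=f^{-1}(y)$, $x_0=f^{-1}(y_0)$ and expanding $y-y_0=\nabla f(x_0)(x-x_0)+o(\|x-x_0\|_2)$, the Lipschitz bound converts $o(\|x-x_0\|_2)$ into $o(\|y-y_0\|_2)$ and yields $x-x_0=(\nabla f(x_0))^{-1}(y-y_0)+o(\|y-y_0\|_2)$. Thus $f^{-1}$ is differentiable with
$$\nabla(f^{-1})(y)=\big(\nabla f(f^{-1}(y))\big)^{-1},$$
and the uniform ${\mathcal A}$-bound just established gives $\sup_{y}\|\nabla(f^{-1})(y)\|_{\mathcal A}<\infty$.

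It remains to prove continuity of $\nabla(f^{-1})$ in ${\mathcal A}$, and this is where the argument concentrates. The map $y\mapsto f^{-1}(y)$ is $\ell^2$-continuous and $x\mapsto\nabla f(x)$ is ${\mathcal A}$-continuous by hypothesis, so $y\mapsto\nabla f(f^{-1}(y))$ is continuous from $\ell^2$ into ${\mathcal A}$; it suffices to show that inversion $A\mapsto A^{-1}$ is continuous in the ${\mathcal A}$-norm along this family. For $A,B$ in the family the resolvent identity $A^{-1}-B^{-1}=A^{-1}(B-A)B^{-1}$ holds in ${\mathcal A}$, so submultiplicativity gives
$$\|A^{-1}-B^{-1}\|_{\mathcal A}\le\|A^{-1}\|_{\mathcal A}\,\|B^{-1}\|_{\mathcal A}\,\|B-A\|_{\mathcal A},$$
and the uniform ${\mathcal A}$-bound on the inverses makes the right-hand side $\le C\|B-A\|_{\mathcal A}$. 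The main obstacle is precisely this uniform control of $\|(\nabla f(x))^{-1}\|_{\mathcal A}$: inverse-closedness alone does not bound the subalgebra norms of inverses (as the Wiener algebra ${\mathcal W}$ shows), so the norm-control hypothesis is indispensable — it is what lets the resolvent identity deliver Lipschitz, hence continuous, dependence of the inverse in the ${\mathcal A}$-norm, completing the proof that $\nabla(f^{-1})$ is bounded and continuous in ${\mathcal A}$.
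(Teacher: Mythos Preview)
Your argument is correct and follows essentially the same route as the paper's proof: invoke \lemref{inverse.lm} for the $\ell^2$ picture, use the chain-rule identity $\nabla(f^{-1})(y)=(\nabla f(f^{-1}(y)))^{-1}$, upgrade the pointwise inverses to a uniform ${\mathcal A}$-bound via norm control, and then use the resolvent identity together with that uniform bound to get ${\mathcal A}$-continuity. The only cosmetic difference is that the paper lets \lemref{inverse.lm} supply the existence and $\ell^2$-continuity of $\nabla(f^{-1})$ directly and then reads off the formula from $\nabla f(f^{-1}(x))\nabla f^{-1}(x)=I$, whereas you rederive the formula by an explicit inverse-function expansion; either way the substance is the same.
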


For a relatively-separated subset $\Lambda$ in \eqref{rsset.def} and a  nonnegative number $\beta\ge 0$, define  the {\em Jaffard class} ${\mathcal J}_\beta(\Lambda)$
 of infinite matrices $A=(a(\lambda, \lambda'))_{\lambda, \lambda'\in \Lambda}$ by
 \begin{equation}\label{jaffard.def}
{\mathcal J}_{\beta}(\Lambda): =\Big\{ A\ \Big|\
\|A\|_{{\mathcal J}_\beta(\Lambda)}:= \sup_{\lambda, \lambda'\in \Lambda} (1+|\lambda-\lambda'|)^\beta |a(\lambda, \lambda')|<\infty
\Big\}\end{equation}
\cite{jaffard90, suncasp05}. The Jaffard class can be interpreted as the set of infinite matrices having polynomial off-diagonal decay
of degree $\beta$. It has been established in \cite{suncasp05, suntams07} that the Jaffard class
 ${\mathcal J}_\beta(\Lambda)$ in \eqref{jaffard.def} with $\beta>d$ is closed under the transpose operation,
it contains the identity matrix $I$,
 and
it satisfies the differential norm property 
\eqref{paracompactcondition}.
Hence the Jaffard class
 ${\mathcal J}_\beta(\Lambda)$  with $\beta>d$ admits norm control in ${\mathcal B}(\ell^2)$ by Proposition
\ref{wienerlmforinfinitematrices.prop} at the end of this section.
Then applying Theorem \ref{nonlinearwiener.tm}
to the Banach algebra ${\mathcal J}_\beta(\Lambda)$ leads to the following conclusion.

\begin{cor}\label{specialnonlinearwiener.cr}
Let $\beta>d$ and  $\Lambda$ be a relatively-separated subset of $\Rd$.
 If
$f$ is a strictly monotonic function on $\ell^2(\Lambda)$
 such that its gradient $\nabla f$
 is bounded and continuous in ${\mathcal J}_\beta(\Lambda)$,
then  $f$ is invertible and  its inverse $f^{-1}$ has continuous bounded gradient
  in ${\mathcal J}_\beta(\Lambda)$.
\end{cor}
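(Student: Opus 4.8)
The plan is to separate the two assertions: first that $f$ is a bijection of $\ell^2$ with a Lipschitz inverse, and then the genuinely new part, that the gradient of that inverse lies in $\mathcal{A}$ and is bounded and continuous there. For the first assertion I would simply invoke the classical theory of strictly monotone maps (\lemref{inverse.lm}): strict monotonicity \eqref{monotoneequiv.def} yields $\|f(x)-f(x')\|_2\ge m_0\|x-x'\|_2$, while $\sup_x\|\nabla f(x)\|_{\mathcal{B}(\ell^2)}<\infty$ (which follows from $\sup_x\|\nabla f(x)\|_{\mathcal A}<\infty$ together with the continuous inclusion $\mathcal{A}\hookrightarrow\mathcal{B}(\ell^2)$) gives a global Lipschitz bound on $f$; these make $f$ a homeomorphism of $\ell^2$ with $\|f^{-1}(y)-f^{-1}(y')\|_2\le m_0^{-1}\|y-y'\|_2$. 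Writing $x=f^{-1}(y)$, the key identity to establish is the inverse-function formula
\begin{equation*}
\nabla (f^{-1})(y)=\big[\nabla f(f^{-1}(y))\big]^{-1}.
\end{equation*}
To obtain it I would feed the difference quotient of $f^{-1}$ into the Fr\'echet expansion of $f$ at $x$: setting $x_t:=f^{-1}(y+td)$ one has $f(x_t)-f(x)=td$ and $\|x_t-x\|_2\le m_0^{-1}|t|\,\|d\|_2$, so continuity of $\nabla f$ makes $f$ continuously Fr\'echet differentiable and $td=\nabla f(x)(x_t-x)+o(\|x_t-x\|_2)$; solving for $(x_t-x)/t$ and letting $t\to 0$ produces the formula.

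The boundedness of $\nabla(f^{-1})$ in $\mathcal{A}$ is where strict monotonicity and norm control enter. For each $x$, \eqref{monotoneequiv.def} shows that both $\nabla f(x)$ and its transpose satisfy $c^T\nabla f(x)c\ge m_0 c^Tc$ (the quadratic form is unchanged under transposition), whence $\|\nabla f(x)c\|_2\ge m_0\|c\|_2$ for $\nabla f(x)$ and for $\nabla f(x)^T$; this makes $\nabla f(x)$ invertible in $\mathcal{B}(\ell^2)$ with $\|[\nabla f(x)]^{-1}\|_{\mathcal{B}(\ell^2)}\le m_0^{-1}$. Since $\mathcal{A}$ is inverse-closed, $[\nabla f(x)]^{-1}\in\mathcal{A}$, and the norm-control function $h$ gives
\begin{equation*}
\big\|[\nabla f(x)]^{-1}\big\|_{\mathcal{A}}\le h\big(\|\nabla f(x)\|_{\mathcal{A}},\,\|[\nabla f(x)]^{-1}\|_{\mathcal{B}(\ell^2)}\big).
\end{equation*}
Because $\|\nabla f(x)\|_{\mathcal{A}}\le M:=\sup_x\|\nabla f(x)\|_{\mathcal{A}}<\infty$ and $\|[\nabla f(x)]^{-1}\|_{\mathcal{B}(\ell^2)}\le m_0^{-1}$ uniformly in $x$, and $h$ is continuous, I would bound the right-hand side by $\max_{[0,M]\times[0,m_0^{-1}]}h<\infty$; combined with the formula above this yields $\sup_y\|\nabla(f^{-1})(y)\|_{\mathcal{A}}<\infty$.

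For continuity of $\nabla(f^{-1})$ as a map from $\ell^2$ into $\mathcal{A}$, I would use the resolvent-type identity
\begin{equation*}
[\nabla f(x')]^{-1}-[\nabla f(x)]^{-1}=[\nabla f(x')]^{-1}\big(\nabla f(x)-\nabla f(x')\big)[\nabla f(x)]^{-1},
\end{equation*}
submultiplicativity of $\|\cdot\|_{\mathcal{A}}$, and the uniform $\mathcal{A}$-bound just obtained, to conclude $\|[\nabla f(x')]^{-1}-[\nabla f(x)]^{-1}\|_{\mathcal{A}}\le K^2\|\nabla f(x')-\nabla f(x)\|_{\mathcal{A}}$, where $K=\sup_x\|[\nabla f(x)]^{-1}\|_{\mathcal{A}}$. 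Given $y'\to y$ in $\ell^2$, the Lipschitz bound on $f^{-1}$ sends $x'=f^{-1}(y')\to x=f^{-1}(y)$ in $\ell^2$, and then the assumed continuity of $\nabla f$ into $\mathcal{A}$ drives the right-hand side to $0$.

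The main obstacle I anticipate is the careful passage through the difference quotient to justify the differentiability of $f^{-1}$ and the gradient formula in the infinite-dimensional setting — specifically, upgrading continuity of $\nabla f$ in $\mathcal{B}(\ell^2)$ to genuine Fr\'echet differentiability of $f$ and controlling the remainder uniformly enough to survive division by $t$. The remaining delicate point is that the norm-control function $h$ is only assumed continuous, not monotone, so the uniform $\mathcal{A}$-bound must be extracted as the maximum of $h$ over the compact rectangle $[0,M]\times[0,m_0^{-1}]$ rather than by naively substituting the separate suprema.
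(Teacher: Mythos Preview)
Your argument is essentially a detailed re-proof of Theorem~\ref{nonlinearwiener.tm} (the general nonlinear Wiener's lemma for an algebra $\mathcal{A}$ that admits norm control), and as such it is correct and closely parallels the paper's proof of that theorem: invoke Lemma~\ref{inverse.lm} for invertibility on $\ell^2$, use the inverse-function identity \eqref{inverse.lm.pf.eq8}, pull the uniform $\mathcal{B}(\ell^2)$-bound $\|[\nabla f(x)]^{-1}\|_{\mathcal{B}(\ell^2)}\le m_0^{-1}$ through norm control to get a uniform $\mathcal{A}$-bound, and finish continuity with the resolvent identity.

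The gap is that you never address what is specific to this corollary. The statement is about the Jaffard class $\mathcal{J}_\beta(\Lambda)$, and the entire content of the corollary beyond Theorem~\ref{nonlinearwiener.tm} is the verification that $\mathcal{J}_\beta(\Lambda)$ actually admits norm control in $\mathcal{B}(\ell^2)$. You simply assume the existence of the norm-control function $h$ without comment. The paper supplies this step explicitly: it cites that for $\beta>d$ the Jaffard class contains $I$, is closed under transposition, and satisfies the differential norm inequality \eqref{paracompactcondition}; then Proposition~\ref{wienerlmforinfinitematrices.prop} converts these properties into norm control (with the explicit bounds \eqref{wienerlmforinfinitematrices.prop.eq1}--\eqref{wienerlmforinfinitematrices.prop.eq1prime}), after which Theorem~\ref{nonlinearwiener.tm} applies directly. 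Your proposal should at minimum invoke these facts about $\mathcal{J}_\beta(\Lambda)$ and Proposition~\ref{wienerlmforinfinitematrices.prop} before running the argument; otherwise you have proved the general theorem but not the corollary.
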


To prove Theorem \ref{nonlinearwiener.tm}, we  recall a textbook version, i.e., ${\mathcal A}={\mathcal B}(\ell^2)$,
about invertibility of a strictly monotonic function on $\ell^2$, see for instance \cite{nonlinearbook}.

\begin{lem}\label{inverse.lm}
Let  $f$ be a strictly monotonic function on $\ell^2$ such that its gradient
$\nabla f$ is  bounded and  continuous  in ${\mathcal B}(\ell^2)$.
Then  $f$ is invertible and its inverse $f^{-1}$  has continuous bounded gradient
 in  ${\mathcal B}(\ell^2)$.
\end{lem}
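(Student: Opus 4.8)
The plan is to establish invertibility by a contraction-mapping argument and then to differentiate the inverse. \emph{Injectivity} is immediate: if $f(x)=f(x')$, then strict monotonicity \eqref{monotone.def} forces $0=(x-x')^T\big(f(x)-f(x')\big)\ge m_0\|x-x'\|_2^2$, hence $x=x'$. For \emph{surjectivity}, I would fix $y\in\ell^2$ and consider the map $T(x)=x-\alpha\big(f(x)-y\big)$ with a relaxation parameter $\alpha>0$ to be chosen; a fixed point of $T$ is exactly a solution of $f(x)=y$. The two ingredients are the lower bound from strict monotonicity and an upper bound on $\|f(x)-f(x')\|_2$. The latter comes from the bounded gradient: writing $M:=\sup_{x\in\ell^2}\|\nabla f(x)\|_{{\mathcal B}(\ell^2)}<\infty$ and using $f(x)-f(x')=\int_0^1\nabla f\big(x'+t(x-x')\big)(x-x')\,dt$ (legitimate once one upgrades the assumed G\^ateaux gradient to a Fr\'echet derivative via the continuity of $\nabla f$ in ${\mathcal B}(\ell^2)$), one obtains $\|f(x)-f(x')\|_2\le M\|x-x'\|_2$.

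Combining these, I would expand
\begin{equation*}
\|T(x)-T(x')\|_2^2=\|x-x'\|_2^2-2\alpha (x-x')^T\big(f(x)-f(x')\big)+\alpha^2\|f(x)-f(x')\|_2^2\le \big(1-2\alpha m_0+\alpha^2 M^2\big)\|x-x'\|_2^2.
\end{equation*}
Since strict monotonicity with $m_0>0$ rules out $M=0$, the interval $(0,2m_0/M^2)$ is nonempty, and for any $\alpha$ in it the factor $r:=\big(1-2\alpha m_0+\alpha^2M^2\big)^{1/2}$ is strictly less than $1$. Thus $T$ is a contraction on the complete space $\ell^2$, and the Banach fixed-point theorem produces a unique fixed point, i.e.\ a unique solution of $f(x)=y$. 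As $y$ is arbitrary, $f$ is a bijection of $\ell^2$. (This contraction is exactly the Van-Cittert iteration \eqref{oldvancittert.def}, so the argument also foreshadows Theorem \ref{vancittertiteration.tm}.)

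For the inverse, I would first record that $f^{-1}$ is Lipschitz: from $m_0\|x-x'\|_2^2\le (x-x')^T\big(f(x)-f(x')\big)\le \|x-x'\|_2\,\|f(x)-f(x')\|_2$ one gets $\|x-x'\|_2\le m_0^{-1}\|f(x)-f(x')\|_2$, which upon setting $y=f(x)$ and $y'=f(x')$ reads $\|f^{-1}(y)-f^{-1}(y')\|_2\le m_0^{-1}\|y-y'\|_2$. Next I would show each $\nabla f(x)$ is invertible in ${\mathcal B}(\ell^2)$: the bound \eqref{monotoneequiv.def} gives $\|\nabla f(x)c\|_2\ge m_0\|c\|_2$, so $\nabla f(x)$ is bounded below (injective, closed range), and applying the same bound to the transpose (for real operators $c^T\big(\nabla f(x)\big)^Tc=c^T\nabla f(x)c$) shows $\nabla f(x)$ also has dense range, hence is onto, so $\big(\nabla f(x)\big)^{-1}\in{\mathcal B}(\ell^2)$ with $\|\big(\nabla f(x)\big)^{-1}\|_{{\mathcal B}(\ell^2)}\le m_0^{-1}$. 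With $f$ a $C^1$ bijection having everywhere-invertible derivative, the Banach-space inverse function theorem yields that $f^{-1}$ is differentiable with $\nabla(f^{-1})(y)=\big(\nabla f(f^{-1}(y))\big)^{-1}$. Boundedness is then immediate, and continuity follows by composing the continuous maps $y\mapsto f^{-1}(y)$ (Lipschitz), $x\mapsto\nabla f(x)$ (continuous by hypothesis), and $A\mapsto A^{-1}$ (continuous on the invertibles of the Banach algebra ${\mathcal B}(\ell^2)$).

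The main obstacle I anticipate is the passage from the quadratic-form bound \eqref{monotoneequiv.def} to genuine invertibility of $\nabla f(x)$ \emph{in} ${\mathcal B}(\ell^2)$, together with the clean application of the inverse function theorem. The delicate point is that the hypothesis only supplies a G\^ateaux gradient, so the first technical task is to promote it to a Fr\'echet derivative using the assumed continuity of $\nabla f$ in ${\mathcal B}(\ell^2)$; once that is in place, both the fundamental theorem of calculus and the inverse function theorem apply and the remaining estimates are routine.
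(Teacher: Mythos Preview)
Your argument is correct. Note, however, that the paper does not actually prove this lemma: it is stated as a ``textbook version'' result and attributed to \cite{minty62, nonlinearbook}, with no proof given. Your write-up is a faithful reconstruction of the standard proof that those references contain --- the Van-Cittert contraction for surjectivity, Cauchy--Schwarz plus strict monotonicity for the Lipschitz bound on $f^{-1}$, and the coercivity estimate $c^T\nabla f(x)c\ge m_0\|c\|_2^2$ applied to both $\nabla f(x)$ and its transpose to force invertibility in ${\mathcal B}(\ell^2)$. The technical caveat you flag (upgrading the G\^ateaux gradient to a Fr\'echet derivative via the assumed norm-continuity of $\nabla f$) is exactly the right one, and the paper implicitly relies on the same upgrade elsewhere, e.g.\ in the integral representation \eqref{pinvertibility.tm.pf.eq1}.
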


\begin{proof}[Proof of Theorem \ref{nonlinearwiener.tm}]
Since ${\mathcal A}$ is a Banach subalgebra of ${\mathcal B}(\ell^2)$,
  the gradient $\nabla f$ is also bounded and  continuous in ${\mathcal B}(\ell^2)$ by the assumptions on $f$. Then
 $f$ is invertible and  the gradient of its inverse $f^{-1}$ is bounded  and continuous in ${\mathcal B}(\ell^2)$ by
 Lemma \ref{inverse.lm}. Therefore the gradient $\nabla (f^{-1})$ is
bounded in ${\mathcal A}$ as  ${\mathcal A}$ admits norm control in ${\mathcal B}(\ell^2)$
and
\begin{equation}\label{inverse.lm.pf.eq8}
\nabla f (f^{-1}(x)) \nabla f^{-1}(x)=I
\end{equation}
by  $f(f^{-1}(x))=x$ for all $x\in \ell^2$.
Combining \eqref{inverse.lm.pf.eq8} with
 the boundedness of $\nabla f$ in  ${\mathcal A}$
and the continuity of $f^{-1}$ on $\ell^2$, we obtain that  for any $y_0\in \ell^2$
\begin{eqnarray*}
  \big\|\nabla f^{-1}(y)-\nabla f^{-1}(y_0)\big\|_{\mathcal A} 
\! & = &\!\!
 \big\|\big(\nabla f ( f^{-1}(y))\big)^{-1}-\big(\nabla f( f^{-1}(y_0))\big)^{-1}\big\|_{\mathcal A}\nonumber\\
& \le &\!\! \big(\sup_{z\in \ell^2} \big\| \big(\nabla f (z )\big)^{-1}\big\|_{{\mathcal A}}\big)^2
 \big\|\nabla f ( f^{-1}(y))-\nabla f( f^{-1}(y_0))\big\|_{\mathcal A}\nonumber\\
 & \to & \!\! 0 \quad  {\rm as} \ y\to y_0 \ {\rm in} \ \ell^2.
\end{eqnarray*}
 This  proves the continuity of $\nabla (f^{-1})$ in ${\mathcal A}$.
\end{proof}

Denote by $A^T$ the transpose of an infinite matrix $A$. We conclude this section by showing that  a Banach subalgebra ${\mathcal A}$ of ${\mathcal B}(\ell^2)$
satisfying  
\eqref{weakparacompactcondition}, a weak version of the differential norm property \eqref{paracompactcondition},
 admits norm control in ${\mathcal B}(\ell^2)$.

\begin{prop}\label{wienerlmforinfinitematrices.prop}
Let ${\mathcal A}$  be a Banach subalgebra of ${\mathcal B}(\ell^2)$  such that  it contains the identity matrix $I$ and
it is closed under the transpose operation. If
there exist positive constants
$D\in (0,\infty)$ and $\theta\in [0,1)$ such that
\begin{equation}\label{weakparacompactcondition}
\|A^2\|_{\mathcal A}\le D \|A\|_{\mathcal A}^{1+\theta}\|A\|_{{\mathcal B}(\ell^2)}^{1-\theta}\quad {\rm for \ all}\
A\in {\mathcal A},\end{equation}
then  ${\mathcal A}$ is an inverse-closed Banach subalgebra of ${\mathcal B}(\ell^2)$.
Moreover, for any $A\in {\mathcal A}$
with $A^{-1}\in {\mathcal B}(\ell^2)$,
\begin{equation}\label{wienerlmforinfinitematrices.prop.eq1}
\|A^{-1}\|_{\mathcal A}  \le   \|A^T\|_{\mathcal A} \|A\|_{{\mathcal B}(\ell^2)}^{-2}
\sum_{n=0}^\infty r^n
\big( D ( \|I\|_{\mathcal A}+\|A^T\|_{\mathcal A} \|A\|_{\mathcal A}
\|A^{-1}\|_{{\mathcal B}(\ell^2)}^{-2}) r^{-1}\big)^{\frac{1+\theta}{\theta} n^{\log_2 (1+\theta)}}
\end{equation}
if $\theta\in (0, 1)$, and
\begin{equation}\label{wienerlmforinfinitematrices.prop.eq1prime}
\|A^{-1}\|_{\mathcal A}  \le   \|A^T\|_{\mathcal A} \|A\|_{{\mathcal B}(\ell^2)}^{-2}
\sum_{n=0}^\infty r^n
\big( D ( \|I\|_{\mathcal A}+\|A^T\|_{\mathcal A} \|A\|_{\mathcal A}
\|A^{-1}\|_{{\mathcal B}(\ell^2)}^{-2}) r^{-1}\big)^{1+\log_2 n }
\end{equation}
if $\theta=0$,
 where $r=1-(\|A^{-1}\|_{{\mathcal B}(\ell^2)}\|A\|_{{\mathcal B}(\ell^2)})^{-2}\in [0,1)$.
\end{prop}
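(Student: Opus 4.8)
The plan is to reduce the claim to inverting a self-adjoint positive operator, to expand that inverse as a Neumann series, and to control the $\mathcal{A}$-norms of the powers occurring in the series by iterating the squaring estimate \eqref{weakparacompactcondition}.

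First I would reduce to the self-adjoint case. Since $\mathcal{A}$ contains $I$ and is closed under transposition, for any $A\in\mathcal{A}$ with $A^{-1}\in\mathcal{B}(\ell^2)$ the operator $B:=A^TA$ again lies in $\mathcal{A}$; it is self-adjoint and positive definite, and since $\langle Bx,x\rangle=\|Ax\|_2^2$ for $x\in\ell^2$, its spectrum is contained in $[\,\|A^{-1}\|_{\mathcal{B}(\ell^2)}^{-2},\,\|A\|_{\mathcal{B}(\ell^2)}^2\,]$. Because $A^{-1}=B^{-1}A^T$, once $B^{-1}\in\mathcal{A}$ is known we get $A^{-1}\in\mathcal{A}$ with $\|A^{-1}\|_{\mathcal{A}}\le\|A^T\|_{\mathcal{A}}\|B^{-1}\|_{\mathcal{A}}$; this is already the source of the prefactor $\|A^T\|_{\mathcal{A}}$ in \eqref{wienerlmforinfinitematrices.prop.eq1}. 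Thus it suffices to prove inverse-closedness and norm control for the self-adjoint positive $B$.

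Next I would set up the Neumann series. Put $R:=I-\|A\|_{\mathcal{B}(\ell^2)}^{-2}B\in\mathcal{A}$. It is self-adjoint, and from the spectral bounds for $B$ its spectrum lies in $[0,r]$ with $r=1-(\|A^{-1}\|_{\mathcal{B}(\ell^2)}\|A\|_{\mathcal{B}(\ell^2)})^{-2}\in[0,1)$, so $\|R^m\|_{\mathcal{B}(\ell^2)}=r^m$ for all $m\ge0$. Hence $B=\|A\|_{\mathcal{B}(\ell^2)}^2(I-R)$ and $B^{-1}=\|A\|_{\mathcal{B}(\ell^2)}^{-2}\sum_{n\ge0}R^n$, the series converging in $\mathcal{B}(\ell^2)$. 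Since $\mathcal{A}$ is complete and embeds continuously into $\mathcal{B}(\ell^2)$, the whole problem becomes: show $\sum_{n\ge0}\|R^n\|_{\mathcal{A}}<\infty$. If so, the series converges in $\mathcal{A}$ to $\|A\|_{\mathcal{B}(\ell^2)}^2B^{-1}$, giving $B^{-1}\in\mathcal{A}$ (inverse-closedness) and $\|A^{-1}\|_{\mathcal{A}}\le\|A^T\|_{\mathcal{A}}\|A\|_{\mathcal{B}(\ell^2)}^{-2}\sum_{n\ge0}\|R^n\|_{\mathcal{A}}$, which is exactly the shape of \eqref{wienerlmforinfinitematrices.prop.eq1}.

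The heart of the argument, and the main obstacle, is the estimate of $\|R^n\|_{\mathcal{A}}$; this is where the paracompactness idea of \cite{suncasp05,suntams07} enters. Applying \eqref{weakparacompactcondition} to $R^{2^{k-1}}$ and using $\|R^{2^{k-1}}\|_{\mathcal{B}(\ell^2)}=r^{2^{k-1}}$ yields the recursion $\|R^{2^k}\|_{\mathcal{A}}\le D\,\|R^{2^{k-1}}\|_{\mathcal{A}}^{1+\theta}\,r^{2^{k-1}(1-\theta)}$. For $\theta\in(0,1)$ this nonlinear recursion can be unrolled in closed form, and the resulting factor carries an exponent growing like $(1+\theta)^k=(2^k)^{\log_2(1+\theta)}$; this is precisely the origin of the superpolynomial-but-subexponential exponent $n^{\log_2(1+\theta)}$ in \eqref{wienerlmforinfinitematrices.prop.eq1}. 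Passing from the dyadic values $n=2^k$ to arbitrary $n$ is carried out through the binary expansion of $n$, writing $R^n$ as a product of the blocks $R^{2^k}$ together with single factors of $R$, and it is in this passage that one must track the base constant $\|I\|_{\mathcal{A}}+\|A^T\|_{\mathcal{A}}\|A\|_{\mathcal{A}}\|A^{-1}\|_{\mathcal{B}(\ell^2)}^{-2}$ and the prefactor $\tfrac{1+\theta}{\theta}$. Because $\log_2(1+\theta)<1$, the geometric decay eventually dominates, so $\sum_{n\ge0}\|R^n\|_{\mathcal{A}}<\infty$ and \eqref{wienerlmforinfinitematrices.prop.eq1} follows. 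The case $\theta=0$ runs identically, except that the recursion $\|R^{2^k}\|_{\mathcal{A}}\le D\,\|R^{2^{k-1}}\|_{\mathcal{A}}\,r^{2^{k-1}}$ is now \emph{linear}, producing the factor $D^{k}=D^{\log_2 n}$ in place of $D^{(1+\theta)^k}$ and hence the exponent $1+\log_2 n$ of \eqref{wienerlmforinfinitematrices.prop.eq1prime}. The delicate point throughout is the constant bookkeeping in the dyadic-to-general-$n$ step; convergence of the series and the ensuing inverse-closedness are then immediate.
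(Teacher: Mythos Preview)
Your proposal is correct and follows essentially the same route as the paper: reduce to the positive self-adjoint operator $A^TA$, form $R=I-\|A\|_{\mathcal B(\ell^2)}^{-2}A^TA$, expand $(I-R)^{-1}$ as a Neumann series, and control $\|R^n\|_{\mathcal A}$ via the binary expansion of $n$ together with the squaring recursion coming from \eqref{weakparacompactcondition}. The only (harmless) slip is the equality $\|R^m\|_{\mathcal B(\ell^2)}=r^m$; in general you only have $\|R^m\|_{\mathcal B(\ell^2)}\le r^m$, which is all that is needed.
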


\begin{proof}
We provide only a brief sketch of the proof
as we can mimic the  argument in \cite{suncasp05} where ${\mathcal A}$ is the Jaffard class.
Let $A\in {\mathcal A}$
with $A^{-1}\in {\mathcal B}(\ell^2)$.
 Then the product $A^T A$ between  $A$ and its transpose $A^T$
is a positive operator on $\ell^2$ by  the invertibility  assumption on  $A$,
\begin{equation}\label{winerlmforinfinitematrices.lm.pf.eq1} (1- r)  I\le \frac{A^TA}{\|A\|_{{\mathcal B}(\ell^2)}^2}\le  I\end{equation}
for some positive constant $r=1-(\|A^{-1}\|_{{\mathcal B}(\ell^2)}\|A\|_{{\mathcal B}(\ell^2)})^{-2}\in [0,1)$.
Set $B=I-A^TA/\|A\|_{{\mathcal B}(\ell^2)}^2$. 
Then
\begin{equation}\label{winerlmforinfinitematrices.lm.pf.eq1+}
\|B\|_{\mathcal A}\le \|I\|_{\mathcal A}+\|A^T\|_{\mathcal A} \|A\|_{\mathcal A} \|A\|_{{\mathcal B}(\ell^2)}^{-2}<\infty\end{equation}
as ${\mathcal A}$ is a Banach algebra closed under the transpose operation.
Using \eqref{weakparacompactcondition} and \eqref{winerlmforinfinitematrices.lm.pf.eq1}, and recalling that ${\mathcal A}$ is a Banach algebra,
 we obtain that for all $n\ge 0$,
\begin{equation}\label{winerlmforinfinitematrices.lm.pf.eq5}
\|B^{2n+1}\|_{\mathcal A}\le \|B\|_{{\mathcal A}} \|B^{2n}\|_{{\mathcal A}}
\end{equation}
and
\begin{equation} \label{winerlmforinfinitematrices.lm.pf.eq4}
\|B^{2n}\|_{\mathcal A}\le D \|B^n\|_{{\mathcal B}(\ell^2)}^{1-\theta} \|B^n\|_{{\mathcal A}}^{1+\theta}
\le C_0 r^{n(1-\theta)} \|B^n\|_{{\mathcal A}}^{1+\theta},
\end{equation}
where $D$ and $\theta$ are given in \eqref{weakparacompactcondition}.
 Applying \eqref{winerlmforinfinitematrices.lm.pf.eq5} and \eqref{winerlmforinfinitematrices.lm.pf.eq4} iteratively, we get
\begin{eqnarray} \label{winerlmforinfinitematrices.lm.pf.eq1++}
\|B^n\|_{{\mathcal A}}  & \le &  D \|B\|_{\mathcal A}^{\epsilon_0} r^{\frac{1-\theta}{2}\sum_{i=1}^k \epsilon_i 2^{i}} \|B^{\sum_{i=1}^k \epsilon_i 2^{i-1}}\|_{\mathcal A}^{1+\theta}\le \cdots
\nonumber\\
& \le & D^{\sum_{j=0}^{k-1} (1+\theta)^j} \|B\|_{\mathcal A}^{\sum_{j=0}^{k-1} \epsilon_j (1+\theta)^j}
r^{\frac{1-\theta}{2} \sum_{j=1}^k \big(\sum_{i=j}^k \epsilon_i 2^{i}\big) (\frac{1+\theta}{2})^{j-1}} \|B^{\epsilon_k}\|_{\mathcal A}^{(1+\theta)^k}\nonumber\\
& \le &
 ( D\|B\|_{{\mathcal A}} r^{-1})^{\frac{1+\theta}{\theta} n^{\log_2 (1+\theta)}} r^n
\end{eqnarray}
if $\theta\in (0,1)$,
and
\begin{equation} \label{winerlmforinfinitematrices.lm.pf.eq1+++}
\|B^n\|_{{\mathcal A}}
 \le   ( D\|B\|_{{\mathcal A}} r^{-1})^{1+\log_2 n} r^n
\end{equation}
if $\theta=0$,
where $n=\sum_{i=0}^k \epsilon_i 2^i$ with $\epsilon_i\in \{0,1\}$ and $\epsilon_k=1$.
Then  $A^{-1} \in {\mathcal A}$
as
\begin{equation} \label{winerlmforinfinitematrices.lm.pf.eq1++++}
A^{-1}= (A^TA)^{-1} A^T= \|A\|_{{\mathcal B}(\ell^2)}^{-2}\sum_{n=0}^\infty B^n A^T.\end{equation}
The estimates \eqref{wienerlmforinfinitematrices.prop.eq1} and \eqref{wienerlmforinfinitematrices.prop.eq1prime} hold by
\eqref{winerlmforinfinitematrices.lm.pf.eq1+}, \eqref {winerlmforinfinitematrices.lm.pf.eq1++},
 \eqref{winerlmforinfinitematrices.lm.pf.eq1+++} and \eqref{winerlmforinfinitematrices.lm.pf.eq1++++}.
\end{proof}


\smallskip

\section{Unique extension} 
\label{extension.section}

  In this section, we  apply   the nonlinear  Wiener's lemma (Theorem \ref{nonlinearwiener.tm}) established in the previous section, and
  show that a strictly monotonic function $f$  on $\ell^2$
  with continuous bounded gradient in some  Banach
algebra
can be uniquely extended to a continuously invertible
function on  $\ell^p, 1\le p\le \infty$.

\begin{thm}\label{pinvertibility.tm} Let   ${\mathcal N}$ be a countable index set,
 ${\mathcal A}$ be a Banach subalgebra of ${\mathcal B}(\ell^2({\mathcal N}))$
that admits norm control in ${\mathcal B}(\ell^2({\mathcal N}))$,
 and let
  $f$ be a  strictly monotonic function on $\ell^2({\mathcal N})$ 
  with the property that $f(0)=0$ and
  the gradient $\nabla f$
 is bounded  and continuous in ${\mathcal A}$.
  Then
 \begin{itemize}
 \item [{(i)}]
the function $f$ can be  uniquely extended to a continuous function $F$  on  $\ell^p({\mathcal N}), 1\le p<\infty$,
 if ${\mathcal A}$ is   a Banach subalgebra of ${\mathcal B}(\ell^p({\mathcal N}))$; and

  \item [{(ii)}] the function $f$  can be uniquely extended to a  continuous function $F$ on  $\ell^\infty({\mathcal N})$ with respect to the weak-star topology, provided that
      ${\mathcal A}$ is  a Banach subalgebra of ${\mathcal B}(\ell^\infty({\mathcal N}))$
and that
  there is a family of nested finite subsets ${\mathcal N}_m, m\ge 1$, of ${\mathcal N}$ such that
      $\cup_{m=1}^\infty {\mathcal N}_m={\mathcal N}$, $ {\mathcal N}_m\subset {\mathcal N}_{m+1}$ for all $m\ge 1$, and
   the   projection operators $P_m, m\ge 1$, defined by
$$P_mx=(x(n)\chi_{{\mathcal N}_m}(n))_{n\in {\mathcal N}} \ {\rm for} \  x=(x(n))_{n\in {\mathcal N}}\in \ell^\infty({\mathcal N}),$$
satisfy
\begin{equation}\label{pinvertibility.tm.eq1}
\lim_{m\to \infty} \sup_{\|A\|_{{\mathcal A}}\le 1}\|P_m A (I-P_{m+1})\|_{{\mathcal B}(\ell^\infty({\mathcal N}))}=0.
 \end{equation}
\end{itemize}
Moreover, the  above unique extension $F$
 has continuous inverse $F^{-1}$ on $\ell^p({\mathcal N}), 1\le p\le \infty$,
 and satisfies the following uniform Lipschitz conditions:
\begin{equation}\label{pinvertibility.tm.eq2}
\|F(x)-F(x')\|_{p}\le \big(\sup_{z\in \ell^2({\mathcal N})} \|\nabla f(z)\|_{\mathcal A}\big)
\big (\sup_{\|A\|_{\mathcal  A}\le 1} \|A\|_{{\mathcal B}(\ell^p({\mathcal N}))}\big) \|x-x'\|_p
\end{equation}
and
\begin{equation}\label{pinvertibility.tm.eq3}
\|F^{-1}(x)-F^{-1}(x')\|_{p}\le \big(\sup_{z\in \ell^2({\mathcal N})} \|\nabla f^{-1}(z)\|_{\mathcal A}\big)
\big (\sup_{\|A\|_{\mathcal  A}\le 1} \|A\|_{{\mathcal B}(\ell^p({\mathcal N}))}\big) \|x-x'\|_p
\end{equation}
for all $x, x'\in \ell^p({\mathcal N}), 1\le p\le \infty$.
\end{thm}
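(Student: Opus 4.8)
The plan is to construct the extension by uniform continuity from the finitely supported vectors, on which $f$ is already defined (they lie in $\ell^2$), and the crucial preliminary estimate is that $f$ is Lipschitz on this dense set when the difference is measured in the $\ell^p$ norm. First I would record, for $x,x'\in\ell^2$, the integral representation $f(x)-f(x')=\int_0^1 \nabla f\big(x'+t(x-x')\big)(x-x')\,dt$, which is valid because $\nabla f$ is continuous in $\mathcal B(\ell^2)$. Since $\mathcal A$ embeds continuously into $\mathcal B(\ell^p)$, every $\nabla f(z)$ satisfies $\|\nabla f(z)\|_{\mathcal B(\ell^p)}\le \big(\sup_{\|A\|_{\mathcal A}\le1}\|A\|_{\mathcal B(\ell^p)}\big)\big(\sup_{z}\|\nabla f(z)\|_{\mathcal A}\big)=:L_p$, so estimating the integrand coordinate-wise gives the bound \eqref{pinvertibility.tm.eq2} for finitely supported $x,x'$. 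Applying this with $x'=0$ and using $f(0)=0$ shows $\|f(x)\|_p\le L_p\|x\|_p<\infty$, so $f$ actually maps finitely supported vectors into $\ell^p$; this is precisely where the normalization $f(0)=0$ is needed (for instance to land in $\ell^p$ when $p<2$).

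For part (i), since the finitely supported vectors are dense in $\ell^p$ for $1\le p<\infty$ and $\ell^p$ is complete, the $L_p$-Lipschitz map $f$ extends uniquely to an $L_p$-Lipschitz map $F$ on $\ell^p$, which gives \eqref{pinvertibility.tm.eq2} and continuity. To see $F=f$ on $\ell^2\cap\ell^p$, I would truncate $x\in\ell^2\cap\ell^p$ by its finite sections: these converge to $x$ both in $\ell^2$ (so their $f$-images converge to $f(x)$ in $\ell^2$ by the $\ell^2$-Lipschitz property) and in $\ell^p$ (so their $f$-images converge to $F(x)$ in $\ell^p$); since both modes of convergence force coordinate-wise convergence, $F(x)=f(x)$. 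Uniqueness is immediate: any continuous extension agrees with $f$ on the dense finitely supported set, hence equals $F$.

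The case $p=\infty$ is the main obstacle, because the finitely supported vectors are not norm-dense and one must pass to the weak-star topology; this is exactly where hypothesis \eqref{pinvertibility.tm.eq1} enters. Here the truncations $P_mx$ converge to $x$ weak-star, and I would show $f(P_mx)$ is weak-star convergent by proving it is coordinate-wise Cauchy while remaining bounded. Boundedness is $\|f(P_mx)\|_\infty\le L_\infty\|x\|_\infty$ (again using $f(0)=0$). For the coordinate-wise part, fix $k$; for $m'>m$ the integral representation gives $P_k\big(f(P_{m'}x)-f(P_mx)\big)=\int_0^1 P_k\nabla f(z_t)(I-P_m)\,u\,dt$ with $u=(P_{m'}-P_m)x$ supported off $\mathcal N_m$ and $\|u\|_\infty\le\|x\|_\infty$, so the block norm $\|P_k\nabla f(z_t)(I-P_m)\|_{\mathcal B(\ell^\infty)}$ controls everything. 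Writing $P_k=P_kP_{m-1}$ for $m-1\ge k$ reduces this to $\sup_{\|A\|_{\mathcal A}\le1}\|P_{m-1}A(I-P_m)\|_{\mathcal B(\ell^\infty)}$, which tends to $0$ by \eqref{pinvertibility.tm.eq1}; thus $\|P_k(f(P_{m'}x)-f(P_mx))\|_\infty\le L_\infty'\,\|x\|_\infty\,\varepsilon_{k,m}$ uniformly in $m'$, with $\varepsilon_{k,m}\to0$. This yields coordinate-wise convergence, hence a weak-star limit $F(x)$, and letting $m'\to\infty$ in the same estimate gives the quantitative comparison $\|P_k(F(x)-f(P_mx))\|_\infty\le L_\infty'\,\|x\|_\infty\,\varepsilon_{k,m}$. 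From this, an $\varepsilon/3$ argument (splitting through $f(P_mx_j)$ and $f(P_mx)$, and using finite-dimensional continuity of $f$ on $P_m$-range) proves weak-star sequential continuity on bounded sets, while weak-star lower semicontinuity of the norm upgrades \eqref{pinvertibility.tm.eq2} from the truncations to all of $\ell^\infty$. Agreement $F=f$ on $\ell^2$ and uniqueness follow exactly as before, now using weak-star convergence $P_mx\to x$.

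Finally, for the invertibility statement I would apply the whole construction a second time to $f^{-1}$. By \thmref{nonlinearwiener.tm}, $f$ is invertible on $\ell^2$ and $\nabla(f^{-1})$ is bounded and continuous in $\mathcal A$; moreover $f^{-1}(0)=0$, and combining the strict monotonicity \eqref{monotone.def} of $f$ with its $\ell^2$-Lipschitz bound $L$ shows $f^{-1}$ is itself strictly monotonic with constant $m_0L^{-2}>0$. Hence parts (i)/(ii) apply verbatim to $f^{-1}$, producing a continuous extension $G$ on $\ell^p$ satisfying \eqref{pinvertibility.tm.eq3}. It remains to identify $G=F^{-1}$: for finitely supported $x_n\to x$ (weak-star when $p=\infty$) one has $G(x_n)=f^{-1}(x_n)\in\ell^2$ and therefore $F(G(x_n))=f\big(f^{-1}(x_n)\big)=x_n$, so continuity of $F\circ G$ (composition of continuous, bounded-set-preserving maps) gives $F(G(x))=x$, and symmetrically $G(F(x))=x$. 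Thus $F^{-1}=G$ with the stated Lipschitz constant, completing the argument.
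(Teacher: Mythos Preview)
Your argument is correct and tracks the paper's proof closely: the Lipschitz bound via the integral representation, the density/truncation extension for $p<\infty$, the coordinate-wise Cauchy argument through \eqref{pinvertibility.tm.eq1} for $p=\infty$, and the passage to $F^{-1}$ via \thmref{nonlinearwiener.tm} are all exactly what the paper does.

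The one organizational difference worth noting is how the inverse is handled. The paper isolates the extension step as a separate lemma (\lemref{pinvertibility.lem}) whose hypotheses are only $f(0)=0$ and $\nabla f$ bounded and continuous in $\mathcal A$ --- strict monotonicity plays no role in the extension itself. The paper then applies this lemma directly to $f^{-1}$, which is legitimate once \thmref{nonlinearwiener.tm} guarantees $\nabla(f^{-1})$ is bounded and continuous in $\mathcal A$. You instead re-invoke parts (i)/(ii) of the theorem on $f^{-1}$, which forces you to check that $f^{-1}$ is itself strictly monotonic; your computation $m_0L^{-2}$ is correct, but the paper's route shows this check is unnecessary. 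Separating the lemma also makes transparent that strict monotonicity enters only through the nonlinear Wiener lemma, not through the extension machinery.
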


The technical assumption \eqref{pinvertibility.tm.eq1} for  a Banach algebra ${\mathcal A}$ of ${\mathcal B}(\ell^\infty({\mathcal N}))$
 can be thought as a weak version of band-limited approximability, as
it holds if
$$\lim_{m\to \infty} \sup_{\|A\|_{\mathcal  A}\le 1} \| A-A_m\|_{{\mathcal B}(\ell^\infty({\mathcal N}))}=0,$$
where $A_m=(a(i,j)\chi_{\rho(i,j)\le K})_{i,j\in {\mathcal N}}$ is the band-limited truncation of  $A=(a(i,j))_{i,j\in {\mathcal N}}
\in {\mathcal A}$, and $\rho$ is a distance function on ${\mathcal N}\times {\mathcal N}$ with $\sup_{i\in {\mathcal N}}
\#\{j\in {\mathcal N}: \rho(i,j)\le m\}<\infty$ for all $m\ge 1$.
We remark that \eqref{pinvertibility.tm.eq1} is {\em invalid} for the classical
Schur class
  $${\mathcal S}:=\Big\{ (a(i,j))_{i,j\in \ZZ}: \ \sup_{i\in \ZZ} \sum_{j\in \ZZ} |a(i,j)|+ \sup_{j\in \ZZ} \sum_{i\in \ZZ} |a(i,j)|<\infty\Big\}.$$

Given $\beta>d$ and a relatively-separated subset $\Lambda$ of $\Rd$, we notice that the Jaffard class ${\mathcal J}_\beta(\Lambda)$ is
a Banach subalgebra of ${\mathcal B}(\ell^p(\Lambda))$ for all $1\le p\le \infty$, and
also that
 \eqref{pinvertibility.tm.eq1} holds   by letting ${\mathcal N}_m=\{\lambda\in \Lambda: |\lambda|\le 2^m\}, m\ge 1$. Therefore
 a strictly monotonic function $f$  on $\ell^2(\Lambda)$
  with continuous bounded gradient in the Jaffard class ${\mathcal J}_\beta(\Lambda)$
can be extended to a continuously invertible
function on  $\ell^p(\Lambda), 1\le p\le \infty$.

\begin{cor}\label{specialextension.cor}
Let $\beta>d, \Lambda$  be a relatively-separated subset of $\Rd$,
 and let  $f$ be a strictly monotonic function on $\ell^2(\Lambda)$ such that  $f(0)=0$ and
  the gradient $\nabla f$
 is bounded  and continuous in ${\mathcal J}_\beta(\Lambda)$.
 Then the function $f$ can be extended to a
 continuously invertible  function   on $\ell^p(\Lambda), 1\le p\le \infty$.
\end{cor}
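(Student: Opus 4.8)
The plan is to obtain Corollary \ref{specialextension.cor} as a direct application of Theorem \ref{pinvertibility.tm}, taking the abstract Banach algebra ${\mathcal A}$ there to be the Jaffard class ${\mathcal J}_\beta(\Lambda)$. Thus the work reduces entirely to checking that ${\mathcal J}_\beta(\Lambda)$ satisfies every hypothesis of that theorem: that it admits norm control in ${\mathcal B}(\ell^2(\Lambda))$, that it is a Banach subalgebra of ${\mathcal B}(\ell^p(\Lambda))$ for each $1\le p\le \infty$, and that — for the endpoint $p=\infty$ — there is a nested exhaustion of $\Lambda$ whose associated projections satisfy the band-limited approximability condition \eqref{pinvertibility.tm.eq1}. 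Once these are in hand, the unique extension $F$, its continuous inverse $F^{-1}$, and the Lipschitz bounds all transfer verbatim from Theorem \ref{pinvertibility.tm}, and the hypothesis $f(0)=0$ of the corollary matches that of the theorem.

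First I would recall that for $\beta>d$ the class ${\mathcal J}_\beta(\Lambda)$ contains the identity, is closed under transposition, and satisfies the inequality \eqref{weakparacompactcondition} (indeed the stronger \eqref{paracompactcondition}), as established in \cite{suncasp05, suntams07}; Proposition \ref{wienerlmforinfinitematrices.prop} then delivers norm control in ${\mathcal B}(\ell^2(\Lambda))$. Next, because $\Lambda$ is relatively separated and $\beta>d$, the off-diagonal bound $|a(\lambda,\lambda')|\le \|A\|_{{\mathcal J}_\beta(\Lambda)}(1+|\lambda-\lambda'|)^{-\beta}$ is summable along both rows and columns, so ${\mathcal J}_\beta(\Lambda)$ embeds continuously into the Schur class; the Schur test then shows every $A\in {\mathcal J}_\beta(\Lambda)$ acts boundedly on $\ell^p(\Lambda)$ for all $1\le p\le \infty$. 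Since multiplicative closure of ${\mathcal J}_\beta(\Lambda)$ is internal to the class and independent of $p$, this makes ${\mathcal J}_\beta(\Lambda)$ a Banach subalgebra of each ${\mathcal B}(\ell^p(\Lambda))$.

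The only genuinely computational step is verifying \eqref{pinvertibility.tm.eq1} for $p=\infty$, and this is where care is needed. I would take the nested finite sets ${\mathcal N}_m=\{\lambda\in\Lambda:|\lambda|\le 2^m\}$, so that $P_mA(I-P_{m+1})$ retains only entries $a(\lambda,\lambda')$ with $|\lambda|\le 2^m$ and $|\lambda'|>2^{m+1}$; for such a pair $|\lambda-\lambda'|\ge |\lambda'|-|\lambda|>2^m$. As the ${\mathcal B}(\ell^\infty(\Lambda))$ norm is the supremum of absolute row sums, for $\|A\|_{{\mathcal J}_\beta(\Lambda)}\le 1$ I would estimate
\[
\|P_mA(I-P_{m+1})\|_{{\mathcal B}(\ell^\infty(\Lambda))}\le \sup_{|\lambda|\le 2^m}\ \sum_{|\lambda-\lambda'|>2^m}(1+|\lambda-\lambda'|)^{-\beta}.
\]
Relative separation \eqref{rsset.def} lets me compare this tail sum to the integral $\int_{|t|>2^m}(1+|t|)^{-\beta}\,dt$, which is finite and of order $(2^m)^{d-\beta}$ precisely because $\beta>d$; hence the quantity tends to $0$ as $m\to\infty$, uniformly over $\|A\|_{{\mathcal J}_\beta(\Lambda)}\le 1$, which is exactly \eqref{pinvertibility.tm.eq1}.

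With all hypotheses verified, Theorem \ref{pinvertibility.tm} applies and produces the unique continuous extension $F$ of $f$ on each $\ell^p(\Lambda)$ together with its continuous inverse $F^{-1}$, giving the claimed continuously invertible extension. There is no deep obstacle here, as the corollary is essentially a specialization; the one point demanding attention is the tail estimate above, and the role of $\beta>d$ is exactly to render that off-diagonal tail summable over the relatively-separated index set once the exponentially growing exhaustion ${\mathcal N}_m=\{|\lambda|\le 2^m\}$ forces the gap $|\lambda-\lambda'|$ between retained rows and columns to grow without bound.
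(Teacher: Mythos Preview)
Your proposal is correct and follows essentially the same route as the paper: the corollary is obtained directly from Theorem \ref{pinvertibility.tm} by verifying that ${\mathcal J}_\beta(\Lambda)$ admits norm control in ${\mathcal B}(\ell^2(\Lambda))$ (via Proposition \ref{wienerlmforinfinitematrices.prop} and the differential norm property from \cite{suncasp05, suntams07}), is a Banach subalgebra of each ${\mathcal B}(\ell^p(\Lambda))$, and satisfies \eqref{pinvertibility.tm.eq1} with the exhaustion ${\mathcal N}_m=\{\lambda\in\Lambda:|\lambda|\le 2^m\}$. Your explicit tail estimate for \eqref{pinvertibility.tm.eq1} simply spells out what the paper asserts in one line.
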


To prove Theorem \ref{pinvertibility.tm}, we need a technical lemma about the extension
of a  function on $\ell^q, 1\le q<\infty$, which is not necessarily strictly monotonic.

\begin{lem}\label{pinvertibility.lem} Let $1\le q<\infty$,
  ${\mathcal N}$ be a countable index set,
 ${\mathcal A}$ be a Banach subalgebra of ${\mathcal B}(\ell^q({\mathcal N}))$,
 and
  $f$ be a continuous function on $\ell^q({\mathcal N})$ such that $f(0)=0$ and
   its gradient $\nabla f$
 is bounded  and continuous in ${\mathcal A}$.
  Then
 \begin{itemize}
 \item [{(i)}]
 the function $f$ can be  uniquely extended to a continuous function $F$ on  $\ell^p({\mathcal N})$   if
 $1\le p<\infty$ and  ${\mathcal A}$ is a Banach subalgebra
  of ${\mathcal B}(\ell^p({\mathcal N}))$; and
\item[{(ii)}]   the function $f$  can be uniquely extended to a
  continuous function $F$ on  $\ell^\infty({\mathcal N})$ with respect to the weak-star topology
  if ${\mathcal A}$ is a Banach subalgebra
  of ${\mathcal B}(\ell^\infty({\mathcal N}))$
 and satisfies \eqref{pinvertibility.tm.eq1}.
\end{itemize}
Furthermore the extension $F$ satisfies the uniform Lipschitz condition
\eqref{pinvertibility.tm.eq2}.
\end{lem}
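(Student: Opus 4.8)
The plan is to base everything on the \emph{fundamental theorem of calculus along line segments}, which turns the hypothesis ``$\nabla f$ bounded in ${\mathcal A}$'' into a Lipschitz estimate. For $x,x'\in\ell^q({\mathcal N})$ set $g(t)=f(x'+t(x-x'))$, $t\in[0,1]$. By the definition of the gradient $g'(t)=\nabla f(x'+t(x-x'))(x-x')$, and the continuity of $\nabla f$ in ${\mathcal A}\subset{\mathcal B}(\ell^q)$ makes $t\mapsto g'(t)$ norm-continuous, so
\begin{equation*}
f(x)-f(x')=\int_0^1 \nabla f\big(x'+t(x-x')\big)(x-x')\,dt,
\end{equation*}
the integral converging in $\ell^q$. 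When in addition $x,x'\in\ell^p\cap\ell^q$ we have $x-x'\in\ell^p$, and since $\nabla f(z)\in{\mathcal A}\subset{\mathcal B}(\ell^p)$ for every $z$, the very same integral converges in $\ell^p$ and yields \eqref{pinvertibility.tm.eq2} for all $x,x'\in\ell^p\cap\ell^q$, with constant $L:=\big(\sup_{z}\|\nabla f(z)\|_{\mathcal A}\big)\big(\sup_{\|A\|_{\mathcal A}\le1}\|A\|_{{\mathcal B}(\ell^p)}\big)$. This $\ell^p$-Lipschitz bound is the engine driving both extension statements.

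When $1\le p\le q$ one has $\ell^p\cap\ell^q=\ell^p\subset\ell^q$, so $f$ is already defined on $\ell^p$; the restriction $F:=f|_{\ell^p}$ is the unique such function and obeys \eqref{pinvertibility.tm.eq2} by the estimate just derived. When $q<p<\infty$ we have $\ell^p\cap\ell^q=\ell^q$, the finitely supported sequences are $\|\cdot\|_p$-dense in $\ell^p$ and lie in $\ell^q$, so $\ell^q$ is a dense subset of the complete space $\ell^p$ on which $f$ is Lipschitz for the $\|\cdot\|_p$ metric. The standard extension theorem for uniformly continuous maps into a complete space then produces a unique Lipschitz (hence continuous) $F:\ell^p\to\ell^p$ agreeing with $f$ on $\ell^q$ and satisfying \eqref{pinvertibility.tm.eq2}. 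This settles part (i).

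For part (ii) norm density fails, so I would build $F$ through the weak-star topology, defining $F(x):=\mathrm{w}^*\text{-}\lim_{m\to\infty}f(P_mx)$ for $x\in\ell^\infty$. The family $\{f(P_mx)\}_m$ is norm-bounded by $L\|x\|_\infty$, so existence of the limit reduces to convergence of $\langle f(P_mx),\phi\rangle$ for each $\phi\in\ell^1({\mathcal N})$. I would bound $\langle f(P_{m'}x)-f(P_mx),\phi\rangle$ ($m'>m$) through the integral representation and split $\phi=P_k\phi+(I-P_k)\phi$: the \emph{far} part is controlled by $\big(\sup_z\|\nabla f(z)\|_{{\mathcal B}(\ell^\infty)}\big)\|x\|_\infty\|(I-P_k)\phi\|_1$, small for large $k$ uniformly in $m,m',t$; the \emph{near} part, after writing $(P_{m'}-P_m)x=(I-P_m)(P_{m'}-P_m)x$ and $P_k\nabla f(z)(I-P_m)=P_kP_{m-1}\nabla f(z)(I-P_m)$, is controlled by $\sup_{\|A\|_{\mathcal A}\le1}\|P_{m-1}A(I-P_m)\|_{{\mathcal B}(\ell^\infty)}$, which tends to $0$ as $m\to\infty$ precisely by hypothesis \eqref{pinvertibility.tm.eq1}. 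Hence $\langle f(P_mx),\phi\rangle$ is Cauchy and $F(x)\in\ell^\infty=(\ell^1)^*$ is well defined; weak-star lower semicontinuity of the norm gives \eqref{pinvertibility.tm.eq2}, and since $P_mx\to x$ in $\ell^q$ for $x\in\ell^q$ one checks $F|_{\ell^q}=f$.

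It remains to verify weak-star continuity and uniqueness. The crucial point is that the two bounds above are uniform over bounded subsets of $\ell^\infty$, so $\langle F(\cdot),\phi\rangle$ is the uniform-on-bounded-sets limit of the maps $x\mapsto\langle f(P_mx),P_k\phi\rangle$; each of these is weak-star continuous because $x\mapsto P_mx$ carries bounded weak-star convergent nets to $\ell^q$-norm convergent ones (only finitely many coordinates are involved) and $f$ is $\ell^q$-continuous, and a uniform limit of continuous functions is continuous. This gives weak-star continuity of $F$ on bounded sets, which is the relevant notion since $\ell^1$ is separable. Uniqueness follows because any weak-star continuous extension $G$ agrees with $f$ on the weak-star dense set of finitely supported sequences, whence $G(x)=\mathrm{w}^*\text{-}\lim G(P_mx)=\mathrm{w}^*\text{-}\lim f(P_mx)=F(x)$. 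I expect the main obstacle to be exactly this $p=\infty$ analysis: making the double-truncation estimate rigorous and, above all, extracting the \emph{uniformity in $x$} needed to pass weak-star continuity through the limit, which is where the delicate hypothesis \eqref{pinvertibility.tm.eq1} must be used in full strength.
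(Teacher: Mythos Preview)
Your proposal is correct and follows essentially the same route as the paper: both arguments rest on the integral representation $f(x)-f(x')=\int_0^1\nabla f(sx+(1-s)x')(x-x')\,ds$ to obtain the $\ell^p$-Lipschitz bound \eqref{pinvertibility.tm.eq2} on $\ell^p\cap\ell^q$, then invoke density for $p<\infty$ and the truncations $P_mx$ together with hypothesis \eqref{pinvertibility.tm.eq1} for $p=\infty$. The only cosmetic difference is in packaging the $p=\infty$ construction: the paper shows, for each fixed $m$, that $P_mf(P_nx)$ is norm-Cauchy in $\ell^\infty$ and then patches the limits $F_m(x)$ into a single $F(x)$, whereas you argue directly that $f(P_mx)$ is weak-star Cauchy by testing against $\phi\in\ell^1$ and splitting $\phi=P_k\phi+(I-P_k)\phi$; these are two equivalent ways of saying the same thing, and the key estimate in both is that $\|P_{m-1}A(I-P_m)\|_{{\mathcal B}(\ell^\infty)}\to0$ uniformly over the unit ball of ${\mathcal A}$.
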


\begin{proof} (i)\quad  Given  $x\in \ell^p:=\ell^p({\mathcal N}), 1\le p<\infty$, take a Cauchy sequence
$x_n\in \ell^p\cap \ell^q, n\ge 1$,
that converges to $x$ in $\ell^p$ and satisfies $x_1=0$.
For any $x^{\prime}, x^{\prime\prime}\in \ell^p\cap \ell^q$,
\begin{eqnarray}\label{pinvertibility.tm.pf.eq1}
\|f(x^{\prime})-f(x^{\prime\prime})\|_{p} & = &\!
\Big\| \Big(\int_0^1 \nabla f(sx^{\prime}+(1-s)x^{\prime\prime}) ds\Big) (x^{\prime}-x^{\prime\prime})\Big\|_{p}\nonumber\\
& \le & \!\big(\sup_{0\le s\le 1} \|\nabla f(sx^{\prime}+(1-s)x^{\prime\prime})\|_{{\mathcal B}(\ell^p)}\big)
 \|x^{\prime}-x^{\prime\prime}\|_{p}\nonumber\\
& \le & \!  \big(\sup_{z\in \ell^q} \|\nabla f(z)\|_{\mathcal A}\big) \big( \sup_{\|A\|_{\mathcal A}\le 1} \|A\|_{{\mathcal B}(\ell^p)}\big)
 \|x^{\prime}-x^{\prime\prime}\|_{p}.
\end{eqnarray}
Thus $f(x_n), n\ge 1$, is a Cauchy sequence in $\ell^p$. Denote the limit of $f(x_n), n\ge 1$, by $F(x)$;
i.e.,
$F(x):=\lim_{n\to \infty} f(x_n)\ {\rm in }\ \ell^p$.
This together with
 \eqref{pinvertibility.tm.pf.eq1}  implies  
 that
 the function $F$ on $\ell^p$ is a continuous extension of
the function $f$ on $\ell^q$ and satisfies  the uniform Lipschitz condition \eqref{pinvertibility.tm.eq2}.
  Furthermore such a continuous extension is unique
due to  the density of $\ell^p\cap \ell^q$ in $\ell^p$.

(ii)\quad  Let $P_m, m\ge 1$, be as in Theorem \ref{pinvertibility.tm}.
For every $m\ge 1$,
\begin{eqnarray}\label{pinvertibility.tm.pf.eq3}
\|P_m f(x)\|_{\infty}  & \le &  \|f(x)-f(0)\|_{\infty}\le \int_0^1
\|  \nabla f(s x)x \|_{\infty}\ ds\nonumber\\
\quad & \le &  
   \big(\sup_{z\in \ell^q} \|\nabla f(z)\|_{\mathcal A}\big) \big(\sup_{\|A\|_{\mathcal A}\le 1} \|A\|_{{\mathcal B}(\ell^\infty)}\big)
\| x\|_{\infty}
\end{eqnarray}
for $x\in \ell^q$, and
\begin{eqnarray} \label{pinvertibility.tm.pf.eq4}
& & \|P_m f(x)-P_m f(x')\|_{\infty}\nonumber\\
 & = & \Big\| \int_0^1
P_m  \nabla f(s x+(1-s) x')(I-P_{k})
 (x-x') ds \Big\|_{\infty}\nonumber\\
\quad & \le &   \big(\sup_{z\in \ell^q} \|\nabla f(z)\|_{\mathcal A}\big)
\big(\sup_{\|A\|_{\mathcal A}\le 1}\|P_m  A(I-P_k)\|_{{\mathcal B}(\ell^\infty)}\big)
\| x-x'\|_{\infty}
\end{eqnarray}
for all $x, x'\in \ell^q$ with $P_kx=P_k x'$.
The above two estimates together with
\eqref{pinvertibility.tm.eq1}  
imply that
 $P_m f(P_nx), n\ge 1$,  is a Cauchy sequence in $\ell^\infty$ for  every $x\in \ell^\infty$ and $m\ge 1$.
 Denote the limit of the above Cauchy sequence by $F_m(x)$.
Clearly 
$P_{m'} F_{m}(x)=F_{m'}(x)$ for all sequences $x\in \ell^\infty$ and integers $m'$ and $m$ with $1\le m'\le m$.
This implies that there is a
unique sequence, to be denoted by $F(x)$, such that
\begin{equation}\label{pinvertibility.tm.pf.eq5} P_m F(x)=F_m(x) \quad {\rm for \ all} \ x\in \ell^\infty \ {\rm and} \ m\ge 1.\end{equation}
The function $F$ is well-defined on $\ell^\infty$ with $F(0)=0$
because
$$ \|F(x)\|_{\infty}
=\sup_{m\ge 1} \lim_{n\to \infty} \|P_mf(P_nx)\|_{\infty}
 \le  \big(\sup_{z\in \ell^q} \|\nabla f(z)\|_{\mathcal A}\big) \big(\sup_{\|A\|_{\mathcal A}\le 1} \|A\|_{{\mathcal B}(\ell^\infty)}\big)
\| x\|_{\infty}
 $$
by \eqref{pinvertibility.tm.pf.eq3}. Moreover, by
\eqref{pinvertibility.tm.pf.eq3}, \eqref{pinvertibility.tm.pf.eq4} and \eqref{pinvertibility.tm.pf.eq5},
\begin{equation}\label{pinvertibility.tm.pf.eq6}
F(x)=f(x) \quad {\rm for \ all} \ x\in \ell^q;
\end{equation}
\begin{equation}\label{pinvertibility.tm.pf.eq7}
\|F(x)-F(x')\|_{\infty}  \le     (\sup_{z\in \ell^q} \|\nabla f(z)\|_{\mathcal A}\big)\big(\sup_{\|A\|_{\mathcal A}\le 1} \|A\|_{{\mathcal B}(\ell^\infty)}\big) \|x-x'\|_{\infty}
\end{equation}
for  all $x,x'\in \ell^\infty$; and
    \begin{eqnarray}\label{pinvertibility.tm.pf.eq8}
 \|P_mF(x)- P_m  F(P_nx)\|_{\infty}  & \le &
 \big(\sup_{\|A\|_{\mathcal A}\le 1}\|P_{n-1}  A(I-P_{n})\|_{{\mathcal B}(\ell^\infty)}\big)
\nonumber\\
 & &  \times
 \big(\sup_{z\in \ell^q} \|\nabla f(z)\|_{\mathcal A}\big)
\| x\|_{\infty}
 \end{eqnarray}
for all  $x\in \ell^\infty$ and $m\le n-1$. 
Denote by  $\langle \cdot, \cdot\rangle$  the action between  sequences in $\ell^1$ and $\ell^\infty$.
From \eqref{pinvertibility.tm.pf.eq6}, \eqref{pinvertibility.tm.pf.eq7} and \eqref{pinvertibility.tm.pf.eq8} it follows that
\begin{eqnarray}\label{pinvertibility.tm.pf.eq9}
& &
|\langle y, F(x)\rangle-\langle y, F(x')\rangle | \nonumber\\
 & \le &  \|(I-P_m)y\|_{1} \big(\|F(x)\|_{\infty} +
 \|F(x')\|_{\infty}\big)\nonumber\\
& &  +
\|P_my\|_{1} \big(\|P_mF(x)-P_m F(P_{m+1} x)\|_{\infty}\nonumber\\
& & \quad  +
 \|P_mF(x')-P_m F(P_{m+1} x')\|_{\infty}\big)\nonumber\\
& & +
\|P_my\|_{1} \|P_mF(P_{m+1} x)-P_m F(P_{m+1} x')\|_{\infty}\nonumber\\
&\le &
\big(\sup_{z\in \ell^q} \|\nabla f(z)\|_{\mathcal A}\big)
\Big(\|y\|_{1} \sup_{\|A\|_{\mathcal A}\le 1}\|P_{m}  A(I-P_{m+1})\|_{{\mathcal B}(\ell^\infty)}\nonumber\\
 & & \ \ +\|(I-P_m)y\|_{1} \sup_{\|A\|_{\mathcal A}\le 1} \|A\|_{{\mathcal B}(\ell^\infty)}
 \Big)\big(\|x\|_{\infty}+\|x'\|_{\infty}\big)\nonumber\\
 & &\quad  + \big(\sup_{z\in \ell^q} \|\nabla f(z)\|_{\mathcal A}\big) \big(\sup_{\|A\|_{\mathcal A}\le 1} \|A\|_{{\mathcal B}(\ell^\infty)}\big)
 \|y\|_{1}
\|P_{m+1}(x-x')\|_{\infty}
\end{eqnarray}
for all $y\in \ell^1, x, x'\in \ell^\infty$ and $m\ge 1$.
This together with the off-diagonal decay assumption \eqref{pinvertibility.tm.eq1}  proves
the continuity of the function $F$ with respect to the  weak-star topology. 
Thus $F$ is an  extension of the function $f$ on $\ell^q$ which is continuous with respect to the weak-star topology and satisfies
the uniform Lipschitz condition
\eqref{pinvertibility.tm.eq2}. The uniqueness of such an extension
follows from the fact that   $\ell^\infty\cap \ell^q$ is dense in  $\ell^\infty$
with respect to the  weak-star topology. 
\end{proof}

\begin{proof}[Proof of Theorem \ref{pinvertibility.tm}] Let $F$ be the unique extension of the function $f$
to $\ell^p$ in Lemma \ref{pinvertibility.lem}. Then $F$ satisfies \eqref{pinvertibility.tm.eq2}
by Lemma \ref{pinvertibility.lem}.
By  Theorem \ref{nonlinearwiener.tm}, $\nabla f^{-1}$ is continuous and bounded in ${\mathcal A}$.
Applying Lemma \ref{pinvertibility.lem} to the function $f^{-1}$ on $\ell^2$
leads to the
 unique extension $G$  of the function $f^{-1}$
to $\ell^p$.
By
 Lemma \ref{pinvertibility.lem}, $G$ is the  continuous inverse of the function $F$ on $\ell^p$
that satisfies \eqref{pinvertibility.tm.eq3}, because
$\ell^p\cap \ell^2$ is dense in  $\ell^p$ with respect to the strong topology if $1\le p<\infty$ and
 the  weak-star topology of $\ell^\infty$ if $p=\infty$.
\end{proof}

\section{Invertibility}\label{invertibility.section}

In this section, we  
apply the unique Lipschitz extension theorem (Theorem \ref{pinvertibility.tm}) established in the
previous section, and  show that the restriction of
a function $f$ on $\ell^{\infty}$ to $\ell^p\subset \ell^\infty, 1\le p\le \infty$, is
 invertible on $\ell^p$ if its gradient $\nabla f$ is bounded and continuous in some Banach algebra and has the strict monotonicity property
  \eqref{pinvertibility3.tm.eq1}.

\begin{thm}\label{pinvertibility.tm3}
Let $2\le p_0\le \infty$,  ${\mathcal A}$ be a subalgebra of ${\mathcal B}(\ell^q)$ for all $1\le q\le \infty$
with the property that it admits norm control in ${\mathcal B}(\ell^2)$ and
 satisfies \eqref{pinvertibility.tm.eq1}. Assume that  $f$ is a continuous function on $\ell^{p_0}$ such that $f(0)=0$,
  its gradient $\nabla f$ is bounded and continuous in ${\mathcal A}$ and satisfies
  \begin{equation} \label{pinvertibility3.tm.eq1}
 c^T \nabla f(x) c\ge m_0 c^T c\quad {\rm for \ all} \ c\in \ell^2\ {\rm and}\  x\in \ell^{p_0}
  \end{equation}
where $m_0>0$. Then
\begin{itemize}
\item[{(i)}] for every $1\le p\le p_0$, the restriction of $f$ to $\ell^p$ is a continuously invertible function  on $\ell^p$ and the gradient of its inverse
  is  bounded and continuous in ${\mathcal A}$; and
  \item[{(ii)}]  for every $p_0<p\le \infty$,
  the function $f$ can be extended to a continuously invertible function $F$
 on $\ell^p$ that  satisfies 
 \begin{equation}\label{pinvertibility3.tm.eq2} \|F(x)-F(x')\|_p\le \big(\sup_{z\in \ell^{p_0}}\|\nabla f(z)\|_{\mathcal A}\big)
 \big(\sup_{\|A\|_{\mathcal A}\le 1}\|A\|_{{\mathcal B}(\ell^p)}\big) \|x-x'\|_p
 \end{equation}
 and
 \begin{equation}\label{pinvertibility3.tm.eq3}
 \|F^{-1}(x)-F^{-1}(x')\|_p\le \big(\sup_{z\in \ell^{p_0}}\|\nabla f^{-1}(z)\|_{\mathcal A}\big)
 \big(\sup_{\|A\|_{\mathcal A}\le 1}\|A\|_{{\mathcal B}(\ell^p)}\big) \|x-x'\|_p
 \end{equation}
 for all $x, x'\in \ell^p$.
  \end{itemize}
\end{thm}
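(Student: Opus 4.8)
The plan is to reduce the whole statement to the Hilbert-space case $p=2$ and then transport the conclusions across the scale $\ell^p$ by means of the unique Lipschitz extension of Theorem~\ref{pinvertibility.tm}. The starting observation is that, because $p_0\ge 2$, we have $\ell^2\subset \ell^{p_0}$, so the restriction $f|_{\ell^2}$ is well defined; and since \eqref{pinvertibility3.tm.eq1} holds for every $x\in \ell^{p_0}\supset \ell^2$, the restriction $f|_{\ell^2}$ satisfies the equivalent strict monotonicity \eqref{monotoneequiv.def} on $\ell^2$ with the same constant $m_0$. Its gradient is bounded and continuous in ${\mathcal A}$, and ${\mathcal A}$ admits norm control in ${\mathcal B}(\ell^2)$, so Theorem~\ref{nonlinearwiener.tm} applies: $f|_{\ell^2}$ is invertible on $\ell^2$ and $(f|_{\ell^2})^{-1}$ has continuous bounded gradient in ${\mathcal A}$. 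With $f(0)=0$ and hypothesis \eqref{pinvertibility.tm.eq1}, Theorem~\ref{pinvertibility.tm} then produces, for every $1\le p\le \infty$, a unique continuous extension $\tilde F$ of $f|_{\ell^2}$ to $\ell^p$ that is continuously invertible and obeys the Lipschitz bounds \eqref{pinvertibility.tm.eq2}--\eqref{pinvertibility.tm.eq3}.

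The core of the argument is an \emph{identification step}: I would show that the extension $\tilde F$ to $\ell^{p_0}$ coincides with the given $f$. Fix $x\in \ell^{p_0}$ and pick finitely supported vectors $x_n\in \ell^2\cap \ell^{p_0}$ with $x_n\to x$ in $\ell^{p_0}$ (weak-star when $p_0=\infty$). Since $x_n\in \ell^2$, we have $\tilde F(x_n)=f(x_n)$; by continuity of $f$ on $\ell^{p_0}$ one has $f(x_n)\to f(x)$, while the Lipschitz bound \eqref{pinvertibility.tm.eq2} (respectively the weak-star continuity of the extension in Theorem~\ref{pinvertibility.tm}(ii)) gives $\tilde F(x_n)\to \tilde F(x)$; uniqueness of limits yields $\tilde F(x)=f(x)$. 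Thus $\tilde F=f$ on $\ell^{p_0}$. Moreover the extensions to the various $\ell^p$ are mutually consistent: for $p\le p_0$, the extension to $\ell^p$ and the restriction to $\ell^p$ of the extension to $\ell^{p_0}$ are both continuous and agree on the dense subset $\ell^2\cap \ell^p$, hence coincide. Therefore $\tilde F=f$ on every $\ell^p$ with $1\le p\le p_0$.

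With the identification in hand the two parts follow. For (i), $1\le p\le p_0$: since $\tilde F=f$ on $\ell^p$ and $\tilde F$ is continuously invertible on $\ell^p$, the restriction $f|_{\ell^p}$ is continuously invertible. For the inverse gradient I would use that strict monotonicity \eqref{pinvertibility3.tm.eq1} forces $\|(\nabla f(x))^{-1}\|_{{\mathcal B}(\ell^2)}\le 1/m_0$ uniformly in $x\in \ell^{p_0}$ (from $\|\nabla f(x)\,c\|_2\,\|c\|_2\ge c^T\nabla f(x)c\ge m_0\|c\|_2^2$ and the same estimate for $(\nabla f(x))^T$, which give boundedness below of both $\nabla f(x)$ and its transpose, hence bijectivity on $\ell^2$); inverse-closedness of ${\mathcal A}$ via norm control then places $(\nabla f(x))^{-1}=\nabla(f^{-1})(f(x))$ in ${\mathcal A}$ with a bound uniform over $x$, and the ${\mathcal A}$-continuity is obtained exactly as in the proof of Theorem~\ref{nonlinearwiener.tm}, from the resolvent-type identity $\nabla f^{-1}(y)-\nabla f^{-1}(y_0)=(\nabla f(f^{-1}(y)))^{-1}-(\nabla f(f^{-1}(y_0)))^{-1}$ together with the continuity of $f^{-1}$ and of $\nabla f$ on $\ell^p\subset \ell^{p_0}$. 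For (ii), $p_0<p\le \infty$: here $\ell^{p_0}\subset \ell^p$, so $F:=\tilde F$ is a genuine extension of $f$ that is continuously invertible, and the bounds \eqref{pinvertibility3.tm.eq2}--\eqref{pinvertibility3.tm.eq3} are exactly \eqref{pinvertibility.tm.eq2}--\eqref{pinvertibility.tm.eq3}, noting $\sup_{z\in \ell^2}\|\nabla f(z)\|_{\mathcal A}\le \sup_{z\in \ell^{p_0}}\|\nabla f(z)\|_{\mathcal A}$ so the stated (larger) constants are valid.

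The step I expect to be most delicate is the identification $\tilde F=f$ when $p_0=\infty$, where both ``continuity of $f$ on $\ell^{p_0}$'' and the extension of Theorem~\ref{pinvertibility.tm}(ii) live in the weak-star topology; one must check that the approximating net of finitely supported vectors can be taken weak-star convergent and that both $f$ and $\tilde F$ are weak-star continuous, so that hypothesis \eqref{pinvertibility.tm.eq1} is genuinely invoked. A secondary technical point is verifying the uniform ${\mathcal A}$-bound and ${\mathcal A}$-continuity of $\nabla(f^{-1})$ over the full domain $\ell^{p_0}$ rather than merely over $\ell^2$, which is precisely where the uniform estimate $\|(\nabla f(x))^{-1}\|_{{\mathcal B}(\ell^2)}\le 1/m_0$ and the norm-control function of ${\mathcal A}$ carry the argument.
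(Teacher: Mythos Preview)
Your proposal is correct and follows essentially the same architecture as the paper's proof: restrict $f$ to $\ell^2$, invoke strict monotonicity there, apply the extension theorem (Theorem~\ref{pinvertibility.tm}) to obtain a continuously invertible function on every $\ell^p$, and then perform an identification step to match the extension with the original $f$ on $\ell^{p_0}$ and its sublattices. The paper organizes the identification slightly differently---it first verifies directly that the restriction $F_p:=f|_{\ell^p}$ is well defined and has $\nabla F_p=\nabla f$, then shows $F_p$ is \emph{the} unique extension of $F_2$ guaranteed by Theorem~\ref{pinvertibility.tm}---but the logical content is the same as your approximation-by-finitely-supported-vectors argument.

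The one place where the paper does more work than your outline is precisely the point you flag as delicate: when $p_0=\infty$, the hypothesis ``$f$ continuous on $\ell^\infty$'' is norm continuity, not weak-star continuity, so one cannot simply pass to the limit along a weak-star convergent sequence. The paper closes this gap by reproving the weak-star continuity of $f$ directly from the gradient bound in ${\mathcal A}$ and assumption~\eqref{pinvertibility.tm.eq1}, via the same $P_m$-splitting estimate used in \eqref{pinvertibility.tm.pf.eq9}. Since you explicitly identify this as the step requiring verification, your plan is complete once that estimate is inserted.
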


We  do not know whether or not the conclusions in the above invertibility theorem hold  when  $1\le p_0<2$.
Taking $p=p_0$ in Theorem \ref{pinvertibility.tm3} leads to the following  generalization of the well known result for
 invertibility  of a function on $\ell^2$, see \cite{minty62, nonlinearbook} or Lemma \ref{inverse.lm}.


\begin{cor}\label{p0invertibility.cor}
Let $2\le p\le \infty$, and ${\mathcal A}$ be a subalgebra of ${\mathcal B}(\ell^q)$ for all $1\le q\le \infty$
with the property that it admits norm control in ${\mathcal B}(\ell^2)$ and
 satisfies \eqref{pinvertibility.tm.eq1}. If  $f$ is a continuous function on $\ell^{p}$ such that $f(0)=0$,
  its gradient $\nabla f$ is bounded and continuous in ${\mathcal A}$ and satisfies
  \begin{equation*} 
 c^T \nabla f(x) c\ge m_0 c^T c\quad {\rm for \ all} \ c\in \ell^2\ {\rm and}\  x\in \ell^{p},
  \end{equation*}
 then
 $f$ has continuous inverse in $\ell^p$ and  the inverse $f^{-1}$ has continuous bounded  gradient in ${\mathcal A}$.
\end{cor}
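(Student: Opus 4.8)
The plan is to read this corollary off directly from the invertibility theorem, Theorem~\ref{pinvertibility.tm3}, by specializing to the diagonal case $p_0=p$. First I would verify that the hypotheses coincide under this identification: the standing assumptions on ${\mathcal A}$ (a subalgebra of ${\mathcal B}(\ell^q)$ for every $1\le q\le\infty$, admitting norm control in ${\mathcal B}(\ell^2)$ and satisfying \eqref{pinvertibility.tm.eq1}), the assumptions on $f$ (continuity on $\ell^{p}$, $f(0)=0$, and $\nabla f$ bounded and continuous in ${\mathcal A}$), and the strict monotonicity property $c^T\nabla f(x)c\ge m_0c^Tc$ for all $c\in\ell^2$ and $x\in\ell^{p}$ are literally the hypotheses of Theorem~\ref{pinvertibility.tm3}, including the strict monotonicity condition \eqref{pinvertibility3.tm.eq1}, with $p_0$ replaced by $p$. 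Hence Theorem~\ref{pinvertibility.tm3} applies with $p_0=p$.

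Next I would invoke part (i) of Theorem~\ref{pinvertibility.tm3} at the endpoint exponent $p=p_0$. That part asserts, for every $1\le p\le p_0$, that the restriction of $f$ to $\ell^p$ is continuously invertible on $\ell^p$ with the gradient of its inverse bounded and continuous in ${\mathcal A}$. Evaluating at $p=p_0$, and observing that $f$ is defined only on $\ell^{p_0}$ so that its restriction to $\ell^{p_0}$ is $f$ itself, gives exactly that $f$ is continuously invertible on $\ell^{p_0}=\ell^{p}$ and that $\nabla(f^{-1})$ is bounded and continuous in ${\mathcal A}$. This is the conclusion of the corollary; part (ii), which concerns genuine extension to exponents larger than $p_0$, is never needed.

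I do not expect a real obstacle in this deduction, since the corollary is a pure specialization and all of the work is carried out inside Theorem~\ref{pinvertibility.tm3}; at $p=p_0=2$ it recovers, and at $2\le p\le\infty$ it sharpens, the textbook $\ell^2$ statement of Lemma~\ref{inverse.lm}. For orientation I would nevertheless recall where that work lives, as it explains why the endpoint case is valid. Restricting $f$ to $\ell^2\subseteq\ell^{p}$ turns \eqref{pinvertibility3.tm.eq1} into the equivalent formulation \eqref{monotoneequiv.def} of strict monotonicity on $\ell^2$; Theorem~\ref{nonlinearwiener.tm} then yields that $f|_{\ell^2}$ is invertible with $\nabla(f^{-1})$ bounded and continuous in ${\mathcal A}$, and Theorem~\ref{pinvertibility.tm} uniquely and continuously extends both $f|_{\ell^2}$ and $(f|_{\ell^2})^{-1}$ to $\ell^{p}$. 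The only delicate point, settled within Theorem~\ref{pinvertibility.tm3} and not repeated here, is the identification of that extension with the prescribed continuous function $f$ on $\ell^{p}$, which rests on uniqueness of the continuous extension (weak-star continuous when $p=\infty$) together with the density of $\ell^2$ in $\ell^{p}$ for $2\le p<\infty$ and its weak-star density in $\ell^\infty$. Granting that identification at the single exponent $p_0=p$, nothing further is required.
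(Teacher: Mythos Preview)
Your proposal is correct and matches the paper's own approach: the corollary is obtained simply by taking $p=p_0$ in Theorem~\ref{pinvertibility.tm3} and reading off part~(i) at the endpoint exponent. The additional orientation you provide about Theorems~\ref{nonlinearwiener.tm} and~\ref{pinvertibility.tm} is accurate but not needed for the deduction itself.
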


\begin{rem}\label{linearequation.rem}{\rm  Applying Corollary \ref{p0invertibility.cor} to a linear function $f(x)=Ax$, where $A$ is an  infinite matrix in ${\mathcal A}$, we conclude that if $A$ is positive definite, i.e., $C_1I\le A\le C_2 I$ for some positive constants $C_1$ and $C_2$, then $A$ has bounded inverse in ${\mathcal B}(\ell^p)$ for all $1\le p\le \infty$. The above result follows from Wiener's lemma for infinite matrices in ${\mathcal A}$ if
we assume further that
${\mathcal A}$ is  closed under transpose operation  and has the identity $I$ as its unit \cite{baskakov90, grochenigsurvey, grochenigklotz10, grochenigklotz12, gltams06, Krishtal11, shincjfa09, suntams07, sunca11}.}
\end{rem}

Taking   the  Jaffard class ${\mathcal J}_{\beta}(\Lambda)$
as the Banach algebra ${\mathcal A}$ in  Theorem \ref{pinvertibility.tm3},
we  obtain 

\begin{cor}\label{specialpinvertibility.cor}
Let $ \beta>d, \Lambda$ be a relatively-separated subset of $\Rd$, and  let $f$ be a continuous function on $\ell^{\infty}(\Lambda)$ such that $f(0)=0$, and
  its gradient $\nabla f$ is bounded and continuous in ${\mathcal J}_\beta(\Lambda)$ and satisfies
 \eqref{pinvertibility3.tm.eq1}. Then
 for every $1\le p\le \infty$, the restriction of $f$ to $\ell^p(\Lambda)$ is a continuously invertible function  on $\ell^p(\Lambda)$, and the gradient of its inverse is bounded and continuous in ${\mathcal J}_\beta(\Lambda)$.
 \end{cor}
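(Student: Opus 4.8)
The plan is to obtain this corollary as the special case of Theorem~\ref{pinvertibility.tm3} in which the Banach algebra $\mathcal A$ is taken to be the Jaffard class ${\mathcal J}_\beta(\Lambda)$ and the base exponent is $p_0=\infty$. The hypotheses imposed on $f$ in the corollary (continuity on $\ell^\infty(\Lambda)$, the normalization $f(0)=0$, boundedness and continuity of $\nabla f$ in ${\mathcal J}_\beta(\Lambda)$, and the strict monotonicity property \eqref{pinvertibility3.tm.eq1}) are exactly the hypotheses on $f$ in Theorem~\ref{pinvertibility.tm3} with $p_0=\infty$. Moreover, with $p_0=\infty$ every exponent $p$ in the range $1\le p\le\infty$ satisfies $1\le p\le p_0$, so it is part~(i) of Theorem~\ref{pinvertibility.tm3} that applies to \emph{all} such $p$, while the extension case~(ii), governing $p_0<p\le\infty$, is vacuous. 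Since the conclusion of part~(i) is precisely that the restriction of $f$ to $\ell^p(\Lambda)$ is continuously invertible on $\ell^p(\Lambda)$ with $\nabla(f^{-1})$ bounded and continuous in $\mathcal A$, the whole task reduces to verifying that ${\mathcal J}_\beta(\Lambda)$ with $\beta>d$ meets the three structural hypotheses on $\mathcal A$ demanded by Theorem~\ref{pinvertibility.tm3}.

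First I would record that ${\mathcal J}_\beta(\Lambda)$ is a subalgebra of ${\mathcal B}(\ell^q(\Lambda))$ for every $1\le q\le\infty$, a fact noted in the discussion following Theorem~\ref{pinvertibility.tm} and resting on the Schur test applied to the polynomial off-diagonal decay of order $\beta>d$ encoded in \eqref{jaffard.def}. Next I would check the norm-control requirement: the Jaffard class with $\beta>d$ contains the identity matrix $I$, is closed under transpose, and satisfies the differential norm property \eqref{paracompactcondition}. Putting $B=A$ in \eqref{paracompactcondition} yields the weak form \eqref{weakparacompactcondition} (with $D=2C_0$), so Proposition~\ref{wienerlmforinfinitematrices.prop} applies and delivers norm control of ${\mathcal J}_\beta(\Lambda)$ in ${\mathcal B}(\ell^2)$, exactly as recorded after the definition \eqref{jaffard.def}.

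The only hypothesis calling for a genuine estimate is the off-diagonal approximability condition \eqref{pinvertibility.tm.eq1}. Here I would use the nested exhaustion ${\mathcal N}_m=\{\lambda\in\Lambda:|\lambda|\le 2^m\}$, $m\ge1$, and bound $\sup_{\|A\|_{{\mathcal J}_\beta}\le1}\|P_mA(I-P_{m+1})\|_{{\mathcal B}(\ell^\infty(\Lambda))}$. Any surviving entry $a(\lambda,\lambda')$ of $P_mA(I-P_{m+1})$ has $|\lambda|\le2^m$ and $|\lambda'|>2^{m+1}$, whence $|\lambda-\lambda'|\ge|\lambda'|-|\lambda|>2^m$; combining the decay $|a(\lambda,\lambda')|\le(1+|\lambda-\lambda'|)^{-\beta}$ (valid when $\|A\|_{{\mathcal J}_\beta}\le1$) with the relative separation \eqref{rsset.def} of $\Lambda$ then controls the relevant row and column sums by a quantity that tends to $0$ as $m\to\infty$. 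This is the single place where $\beta>d$ is essential, and it is the only step requiring a nontrivial (though routine) summation estimate.

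With the three structural hypotheses verified, I would simply invoke part~(i) of Theorem~\ref{pinvertibility.tm3} with ${\mathcal A}={\mathcal J}_\beta(\Lambda)$ and $p_0=\infty$ to conclude, for every $1\le p\le\infty$, that the restriction of $f$ to $\ell^p(\Lambda)$ is continuously invertible on $\ell^p(\Lambda)$ and that the gradient of its inverse is bounded and continuous in ${\mathcal J}_\beta(\Lambda)$, which is the assertion of the corollary. I expect no real obstacle beyond the off-diagonal decay bookkeeping behind \eqref{pinvertibility.tm.eq1}, since every other ingredient is quoted directly from results established earlier in the paper.
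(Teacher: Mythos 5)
Your proposal is correct and matches the paper's route exactly: the paper obtains Corollary~\ref{specialpinvertibility.cor} by taking ${\mathcal A}={\mathcal J}_\beta(\Lambda)$ (with $p_0=\infty$) in Theorem~\ref{pinvertibility.tm3}, relying on the previously recorded facts that the Jaffard class is a subalgebra of ${\mathcal B}(\ell^q)$ for all $1\le q\le\infty$, admits norm control in ${\mathcal B}(\ell^2)$ via the differential norm property \eqref{paracompactcondition} and Proposition~\ref{wienerlmforinfinitematrices.prop}, and satisfies \eqref{pinvertibility.tm.eq1} with ${\mathcal N}_m=\{\lambda\in\Lambda:|\lambda|\le 2^m\}$. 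Your verifications of these three structural hypotheses, including the off-diagonal decay estimate using $\beta>d$ and the relative separation of $\Lambda$, are exactly the details the paper leaves implicit, so there is nothing to correct.
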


\begin{proof}[Proof of Theorem \ref{pinvertibility.tm3}] (i)\quad
First we prove that  for every $1\le p\le p_0$ the restriction of the function $f$ on $\ell^p$,  denoted by $F_p$, is well-defined.
Let $P_m, m\ge 1$, be as in Theorem \ref{pinvertibility.tm}. Take $x\in \ell^p\subset \ell^{p_0}$, then
$P_m f(x)\in \ell^p$  and
 \begin{eqnarray}\label{pinvertibility.tm3.pf.eq3}
\|P_m f(x)\|_p & = & \|P_m f(x)-P_m f(0)\|_p\le
\int_0^1 \|P_m \nabla f(sx) x\|_{p}\ \! ds\nonumber\\
& \le  & \big(\sup_{z\in \ell^{p_0}} \|\nabla f(z)\|_{{\mathcal B}(\ell^p)}\big)
\|x\|_p \quad {\rm for \ all} \ m\ge 1,
 \end{eqnarray}
which implies that
\begin{equation*} 
\|f(x)\|_p=\sup_{m\ge 1} \|P_m f(x)\|_p\le
 \big(\sup_{z\in \ell^{p_0}} \|\nabla f(z)\|_{{\mathcal A}}\big) \big(\sup_{\|A\|_{\mathcal A}\le 1} \|A\|_{{\mathcal B}(\ell^p)}\big)
\|x\|_p
\end{equation*}
for all $x\in \ell^p$. Thus the restriction of $f$ on $\ell^p$ is well-defined.

Next we prove that for every $1\le p\le p_0$, the function $F_p$ is
continuous
on $\ell^p$ and has continuous bounded gradient in ${\mathcal A}$.
Similar to the argument in
\eqref{pinvertibility.tm3.pf.eq3}, we obtain
\begin{equation*} 
\|F_p(x')-F_p(x)\|_p 
\le
\big(\sup_{z\in \ell^{p_0}} \|\nabla f(z)\|_{{\mathcal A}}\big)
\big(\sup_{\|A\|_{\mathcal A}\le 1} \|A\|_{{\mathcal B}(\ell^p)}\big)
\|x-x'\|_p
\end{equation*}
and
\begin{eqnarray*} 
  & & \|F_p(x')-F_p(x)-\nabla f(x) (x'-x)\|_p  
 \le  \big(\sup_{\|A\|_{\mathcal A}\le 1} \|A\|_{{\mathcal B}(\ell^p)}\big)\nonumber\\
 & & \qquad \qquad \times
\big(\sup_{0\le s\le 1} \|\nabla f(x+s(x'-x))-\nabla f(x)\|_{{\mathcal A}}\big) 
\|x-x'\|_p
\end{eqnarray*}
for all $x, x'\in \ell^p$. This implies that $F_p$  is continuous on $\ell^p$ and
\begin{equation}\label{pinvertibility.tm3.pf.eq7}\nabla F_p(x)=\nabla f(x)\quad {\rm  for\ all}\ 1\le p\le p_0.\end{equation}
Hence $F_p$ has its gradient $\nabla F_p(x)$ in ${\mathcal A}$.
The  continuity of $\nabla F_p(x), x\in \ell^p$, in ${\mathcal A}$
then follows from the continuity of the gradient $\nabla f(x), x\in \ell^{p_0}$, in ${\mathcal A}$
and  the continuous embedding of $\ell^p$ in $\ell^{p_0}$ for every $1\le p\le p_0$.

Then  we prove the continuous invertibility of the function $F_p$ on $\ell^p, 1\le p\le p_0$.
We notice that the function $F_2$ satisfies all requirements
for the function $f$ in Theorem \ref{pinvertibility.tm}
 because it has   continuous gradient in ${\mathcal A}$ and  strictly monotonic property  by
\eqref{pinvertibility3.tm.eq1}, \eqref{pinvertibility.tm3.pf.eq7} and the assumption $p_0\ge 2$, and it
   satisfies $F_2(0)=0$ by the assumption $f(0)=0$.
Thus
by Theorem \ref{pinvertibility.tm} it suffices to show that
 the function $F_p$ is a continuous extension of the function $F_2$ on
$\ell^p$ with respect to the strong topology when $1\le p<\infty$ and the weak-star topology when $p=\infty$.
The  above extension property follows
as   both $F_2$ and $F_p$ are the restriction of the function $f$
on $\ell^{p_0}$. The continuity of  the function $F_p$ with respect to the strong topology when $1\le p<\infty$ follows as
$F_p$ is continuous on $\ell^p$. To establish the continuity of  the function $F_p$ with respect to the weak-star topology when $p=\infty$,
we notice that  $p_0=\infty$ and $F_p=f$ in this case.
Moreover for $p=\infty$, similar to the argument used to  establish \eqref{pinvertibility.tm.pf.eq9} we obtain
\begin{eqnarray*}
& & |\langle  y, F_p(x)\rangle-\langle y, F_p(x')\rangle|\nonumber\\
&\le &
\big(\sup_{z\in \ell^\infty} \|\nabla f(z)\|_{\mathcal A}\big)
\Big(\|y\|_{1} \sup_{\|A\|_{\mathcal A}\le 1}\|P_{m}  A(I-P_{m+1})\|_{{\mathcal B}(\ell^\infty)}\nonumber\\
 & & \ \ +\|(I-P_m)y\|_{1} \sup_{\|A\|_{\mathcal A}\le 1} \|A\|_{{\mathcal B}(\ell^\infty)}
 \Big)\big( \|x\|_{\infty}+\|x'\|_{\infty}\big)\nonumber\\
 & &\quad  + \big(\sup_{z\in \ell^\infty} \|\nabla f(z)\|_{\mathcal A}\big) \big(\sup_{\|A\|_{\mathcal A}\le 1} \|A\|_{{\mathcal B}(\ell^\infty)}\big)
 \|y\|_{1}
\|P_{m+1}(x-x')\|_{\infty}
\end{eqnarray*}
for all $y\in \ell^1, x, x'\in \ell^\infty$ and $m\ge 1$, which  implies that $F_p$ is
continuous on $\ell^\infty$  with respect to  the weak-star topology.

Finally we prove that for every $1\le p\le p_0$, the inverse of the function $F_p$ has its gradient being bounded and continuous in  ${\mathcal A}$.
Take $x\in \ell^p$, and let  $x_n\in \ell^p\cap \ell^2, n\ge 1$, converge to $x$ with respect to the strong topology of $\ell^p$ when $1\le p<\infty$ and
the weak-star topology of $\ell^\infty$ when $p=\infty$.
Recall that the function $F_p$ is a continuous extension of the function $F_2$ on
$\ell^p$ with respect to the strong topology when $1\le p<\infty$ and the weak-star topology when $p=\infty$.
Then
\begin{equation*}
\lim_{n\to \infty} c^T (F_2(x_n+tc)-F_2(x_n))= c^T (F_p(x+tc)-F_p(x)),
\end{equation*}
where $t\in \RR$ and $c\in \ell^0$ (the space of all finitely supported sequences).
This together with \eqref{pinvertibility3.tm.eq1} and \eqref{pinvertibility.tm3.pf.eq7}
implies that
\begin{eqnarray*}
c^T \nabla F_p(x) c & = &  \lim_{t\to 0} t^{-1}
c^T (F_p(x+tc)-F_p(x))\nonumber\\
& = & \lim_{t\to 0} \lim_{n\to \infty}
 t^{-1} c^T (F_2(x_n+tc)-F_2(x_n))\nonumber\\
& = &  \lim_{t\to 0} \lim_{n\to \infty}
\int_{0}^1 c^T \nabla F_2(x_n+st c) c \ \! ds\ge  m_0 c^T c \quad {\rm for \ all} \ c\in \ell^0.
\end{eqnarray*}
Thus
\begin{equation*}
c^T \nabla F_p(x) c\ge m_0 c^T c \quad {\rm for \ all} \ c\in \ell^2\  {\rm and} \   x\in \ell^p
\end{equation*}
by \eqref{pinvertibility3.tm.eq1}, \eqref{pinvertibility.tm3.pf.eq7}, the density of $\ell^0$ in $\ell^2$,  and
${\mathcal A}$ is a Banach subalgebra of ${\mathcal B}(\ell^2)$.
Hence $\nabla F_p(x)$ is invertible on $\ell^2$  and
$\|(\nabla F_p(x))^{-1}\|_{{\mathcal B}(\ell^2)}\le 1/m_0$ for all $x\in \ell^p$.
Notice that $\nabla F_p(x), x\in \ell^p$, is bounded and continuous in ${\mathcal A}$ by \eqref{pinvertibility.tm3.pf.eq7} and
the assumption that $\nabla f(x), x\in \ell^{p_0}$, is bounded and continuous in  ${\mathcal A}$. Then
 we obtain from the inverse-closedness of the subalgebra ${\mathcal A}$ in ${\mathcal B}(\ell^2)$ that
$(\nabla F_p)^{-1}$ is bounded and continuous in ${\mathcal A}$.
This together with \eqref{inverse.lm.pf.eq8}  proves that the gradient $\nabla (F_p^{-1})$ of the inverse function $F_p^{-1}$ is bounded in ${\mathcal A}$.
 The continuity of $\nabla (F_p^{-1})$ in ${\mathcal A}$
follows from \eqref{inverse.lm.pf.eq8}, the continuity of $\nabla F_p$ in ${\mathcal A}$ and the boundedness of
$\nabla (F_p^{-1})$ in ${\mathcal A}$.

\bigskip

(ii) \quad We recall that the function $F_2$ satisfies all requirements
for the function $f$ in Theorem \ref{pinvertibility.tm}. Then by Theorem \ref{pinvertibility.tm}, there exists
a unique extension $F$ of the function $F_2$ on $\ell^p, 2\le p\le \infty$,  which is continuous with respect to the strong topology of $\ell^p$ when $1\le p<\infty$ and the weak-star topology of $\ell^p$ when $p=\infty$. As shown earlier, $F_p, 2\le p\le p_0$,
are the restriction of the function $f$ on $\ell^{p_0}$ and are continuous with respect to the strong topology of $\ell^p$ when $1\le p<\infty$
 and the weak-star topology of $\ell^p$ when $p=\infty$. Hence $F=F_p$ when $2\le p\le p_0$, which in turn implies that for $p_0<p\le \infty$, the function $F$ is a continuous  extension of the function  $f$ on $\ell^{p_0}$. The Lipschitz conditions \eqref{pinvertibility3.tm.eq2} and \eqref{pinvertibility3.tm.eq3} for the function $F$ follow from \eqref{inverse.lm.pf.eq8}, Theorem \ref{pinvertibility.tm}
 and the inclusion $\ell^2\subset \ell^{p_0}$.
\end{proof}

\section{Error estimate}\label{error.section}

 In this section, we apply 
 the invertibility theorem (Theorem  \ref{pinvertibility.tm3}) established in the previous section, and discuss
  the   $\ell^p$-error estimate
for solving the  nonlinear functional equation
\eqref{nfe.def} in the presence of noises in $\ell^p, 1\le p\le \infty$. 

\begin{thm}\label{errorestimate.tm}
Let $2\le p_0\le \infty, 1\le p\le p_0$,  ${\mathcal A}$ be a subalgebra of ${\mathcal B}(\ell^q)$ for all $1\le q\le \infty$
with the property that it admits norm control in ${\mathcal B}(\ell^2)$ and
 satisfies \eqref{pinvertibility.tm.eq1}, and let
  $f$ be a continuous function on $\ell^{p_0}$ such that $f(0)=0$,
  its gradient $\nabla f$ is bounded and continuous in ${\mathcal A}$ and satisfies \eqref{pinvertibility3.tm.eq1}.
If $x_0$ and $x_\epsilon$
 are the solutions of the nonlinear functional equation \eqref{nfe.def}
 from the noiseless observed data $y\in \ell^p$ and the noisy observed data $y+\epsilon$ in $\ell^p$ respectively (i.e.,
 $f(x_0)=y$ and $f(x_\epsilon)=y+\epsilon$), then
 \begin{equation}\label{errorestimate.tm.eq1}
\|x_\epsilon-x_0\|_p\le  \big(\sup_{z\in \ell^{p_0}} \|\nabla f^{-1}(z)\|_{\mathcal A}\big)
\big(\sup_{\|A\|_{\mathcal A}\le 1} \|A\|_{{\mathcal B}(\ell^p)}\big)
\|\epsilon\|_p,
\end{equation}
and
 \begin{equation}\label{errorestimate.tm.eq2}
\frac{\|x_\epsilon-x_0\|_p}{\|x_0\|_p}
\le \kappa_{\mathcal A}(f)
\big(\sup_{\|A\|_{\mathcal A}\le 1} \|A\|_{{\mathcal B}(\ell^p)}\big)^2
\frac{\|\epsilon\|_p}{\|y\|_p},
\end{equation}
 where $\kappa_{\mathcal A}(f)= \big(\sup_{z\in \ell^{p_0}} \|\nabla f(z)\|_{\mathcal A}\big)
 \big(\sup_{z\in \ell^{p_0}} \|\nabla f^{-1}(z)\|_{\mathcal A}\big)$.
\end{thm}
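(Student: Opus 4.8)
The plan is to derive both estimates as direct consequences of the invertibility and Lipschitz properties furnished by \thmref{pinvertibility.tm3}, exactly in the spirit of the classical condition-number bound $\|\delta x\|/\|x\|\le \kappa\,\|\delta y\|/\|y\|$ from numerical linear algebra. Since $2\le p_0$ and $1\le p\le p_0$, part (i) of \thmref{pinvertibility.tm3} applies: the restriction of $f$ to $\ell^p$ is a continuously invertible function on $\ell^p$ whose inverse has gradient bounded and continuous in ${\mathcal A}$. In particular the two solutions are uniquely determined by $x_0=f^{-1}(y)$ and $x_\epsilon=f^{-1}(y+\epsilon)$, so the whole problem reduces to controlling the moduli of continuity of $f$ and $f^{-1}$ on $\ell^p$.

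For \eqref{errorestimate.tm.eq1} I would first record the Lipschitz bound for $f^{-1}$ on $\ell^p$. Writing $f^{-1}(y+\epsilon)-f^{-1}(y)=\big(\int_0^1 \nabla f^{-1}(y+s\epsilon)\,ds\big)\epsilon$ exactly as in \eqref{pinvertibility.tm.pf.eq1}, and then passing from the operator norm on ${\mathcal B}(\ell^p)$ to the ${\mathcal A}$-norm by the norm domination $\sup_z\|\nabla f^{-1}(z)\|_{{\mathcal B}(\ell^p)}\le\big(\sup_z\|\nabla f^{-1}(z)\|_{\mathcal A}\big)\big(\sup_{\|A\|_{\mathcal A}\le1}\|A\|_{{\mathcal B}(\ell^p)}\big)$, yields precisely \eqref{errorestimate.tm.eq1}. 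When $p_0<p$ this is literally \eqref{pinvertibility3.tm.eq3}; for $1\le p\le p_0$ it is the same one-line integral estimate, using that $\nabla f^{-1}$ is bounded in ${\mathcal A}$ by part (i).

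For the relative estimate \eqref{errorestimate.tm.eq2} I would exploit the normalization $f(0)=0$ to bound $\|x_0\|_p$ from below in terms of $\|y\|_p$. Indeed $y=f(x_0)-f(0)$, so the very same integral and norm-domination argument applied to $f$ gives $\|y\|_p\le\big(\sup_z\|\nabla f(z)\|_{\mathcal A}\big)\big(\sup_{\|A\|_{\mathcal A}\le1}\|A\|_{{\mathcal B}(\ell^p)}\big)\|x_0\|_p$, that is, the lower bound $\|x_0\|_p^{-1}\le\big(\sup_z\|\nabla f(z)\|_{\mathcal A}\big)\big(\sup_{\|A\|_{\mathcal A}\le1}\|A\|_{{\mathcal B}(\ell^p)}\big)\|y\|_p^{-1}$. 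Multiplying this lower bound against \eqref{errorestimate.tm.eq1} collapses the two factors $\sup_{\|A\|_{\mathcal A}\le1}\|A\|_{{\mathcal B}(\ell^p)}$ into a square and the two gradient suprema into $\kappa_{\mathcal A}(f)$, producing \eqref{errorestimate.tm.eq2}.

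The argument is essentially routine once \thmref{pinvertibility.tm3} is in hand; the only point that needs care is that the Lipschitz constant for $f^{-1}$ on $\ell^p$ in the regime $1\le p\le p_0$ is not recorded there as a displayed inequality, so I would reproduce the short mean-value estimate invoking the boundedness of $\nabla f^{-1}$ in ${\mathcal A}$ from part (i). I also need to confirm that the supremum of $\|\nabla f^{-1}\|_{\mathcal A}$ over $\ell^p$ is dominated by its supremum over $\ell^{p_0}$, which holds because $\ell^p\subset\ell^{p_0}$ and the gradient of the restriction coincides with $\nabla f^{-1}$; this is the only place where the hypothesis $p\le p_0$ is actually used.
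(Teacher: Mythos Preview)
Your proposal is correct and follows essentially the same route as the paper's proof: both derive \eqref{errorestimate.tm.eq1} from the mean-value/Lipschitz bound for $f^{-1}$ supplied by Theorem~\ref{pinvertibility.tm3}, then obtain \eqref{errorestimate.tm.eq2} by combining it with the analogous bound $\|y\|_p\le \big(\sup_{z}\|\nabla f(z)\|_{\mathcal A}\big)\big(\sup_{\|A\|_{\mathcal A}\le 1}\|A\|_{{\mathcal B}(\ell^p)}\big)\|x_0\|_p$ coming from $f(0)=0$. Your extra remark that the supremum over $z\in\ell^p$ is dominated by the supremum over $z\in\ell^{p_0}$ via $\ell^p\subset\ell^{p_0}$ is exactly the (implicit) step the paper uses.
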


\begin{rem}{\rm If ${\mathcal A}={\mathcal B}(\ell^2)$ and
$f$ is a linear function on $\ell^{p_0}$ (i.e, $f(x)=Ax$ for some matrix $A$),
 then
$\kappa_{\mathcal A}(f)$ in Theorem \ref{errorestimate.tm}
 becomes the condition number  for the matrix $A$. So the quantity
$\kappa_{\mathcal A}(f)$   can be thought as the condition number of the function $f$ in the Banach algebra ${\mathcal A}$.
}\end{rem}

\begin{rem} {\rm
Estimates in Theorems \ref{pinvertibility.tm}, \ref{pinvertibility.tm3}
 and \ref{errorestimate.tm}
 could be improved by replacing the ${\mathcal A}$-norm by the conventional ${\mathcal B}(\ell^p)$-norm. For instance, we can  replace \eqref{errorestimate.tm.eq1} and \eqref{errorestimate.tm.eq2}
in Theorem \ref{errorestimate.tm}
by the following better  estimates:
 \begin{equation}\label{errorestimate.rem2.eq1}
\|x_\epsilon-x_0\|_p\le  \big(\sup_{z\in \ell^{p_0}} \|\nabla f^{-1}(z)\|_{{\mathcal B}(\ell^p)}\big)
\|\epsilon\|_p,
\end{equation}
and
 \begin{equation}\label{errorestimate.rem.eq2}
\frac{\|x_\epsilon-x_0\|_p}{\|x_0\|_p}
\le  \big(\sup_{z\in \ell^{p_0}} \|\nabla f(z)\|_{{\mathcal B}(\ell^p)}\big)
 \big(\sup_{z\in \ell^{p_0}} \|\nabla f^{-1}(z)\|_{{\mathcal B}(\ell^p)}\big)
\frac{\|\epsilon\|_p}{\|y\|_p}.
\end{equation}
 We select the  suboptimal
estimates in Theorems \ref{pinvertibility.tm}, \ref{pinvertibility.tm3}
 and \ref{errorestimate.tm}
for emphasizing the fundamental role
 the inverse-closed Banach algebra ${\mathcal A}$  has played,
and for increasing  computational feasibility, c.f. the arguments used to prove Theorems \ref{vancittertiteration.tm},
\ref{frirecovery.tm1} and \ref{friidentification.tm}.
On the other hand,  the  replacement  of ${\mathcal A}$-norm by the ordinary ${\mathcal B}(\ell^p)$-norm
does not always work well.
In our study of global linear convergence of the Van-Cittert iteration method (Theorem \ref{vancittertiteration.tm}),
 we cannot replace the ${\mathcal A}$-norm  in \eqref{vancittertiteration.tm.pf.eq13-0}
by the ordinary ${\mathcal B}(\ell^p)$-norm. The reasons are that the differential norm  property
\eqref{paracompactcondition} does not hold when  ${\mathcal A}$ is replaced by ${\mathcal B}(\ell^p), p\ne 2$,
and that the new $b_j$ with the ${\mathcal A}$-norm  replaced by
the ${\mathcal B}(\ell^p)$-norm may not satisfy the crucial iterative properties \eqref{vancittertiteration.tm.pf.eq13} and
\eqref{vancittertiteration.tm.pf.eq14}.
} \end{rem}

\begin{rem}\label{sfm.rem} {\rm The  mean squared error, as a signal fidelity measure,
is widely criticized in the field of signal processing for its failure 
to deal with
perceptually important signals such as speech and images.
An interesting alternative is the $\ell^p$-norm with $p\ne 2$,
 possibly with adaptive spatial weighting, see the recent review paper \cite{wang09} on the signal fidelity measure.
This is one of our motivations to consider solving the nonlinear functional equation \eqref{nfe.def}
in the presence of noises in $\ell^p$ with $p\ne 2$.
}\end{rem}

\begin{proof}[Proof of Theorem \ref{errorestimate.tm}]
By Theorem \ref{pinvertibility3.tm.eq1},
the function $f$ is invertible on $\ell^p$ and hence the
nonlinear functional equation \eqref{nfe.def}
is solvable in $\ell^p, 1\le p\le p_0$. Thus $x, x_\epsilon\in \ell^p$.

Again by Theorem \ref{pinvertibility3.tm.eq1},
the inverse   of the function $f$ on $\ell^p$ has its gradient bounded and continuous in ${\mathcal A}$.
Thus
\begin{eqnarray*}
\|f^{-1}(y_1)-f^{-1}(y_2)\|_p 
 & \le &  \big(\sup_{z\in \ell^{p_0}} \|\nabla f^{-1}(z)\|_{\mathcal A}\big)
\big(\sup_{\|A\|_{\mathcal A}\le 1} \|A\|_{{\mathcal B}(\ell^p)}\big) \|y_1-y_2\|_p
\end{eqnarray*}
for all $y_1, y_2\in \ell^p$.
 Applying the above estimate 
  with $y_1, y_2$ replaced by $y, y+\epsilon\in \ell^p$
proves  the absolute error estimate \eqref{errorestimate.tm.eq1}.

Using the above argument with the function $f^{-1}$ replaced by the function $f$ leads to
\begin{equation}\label{errorestimate.tm.pf.eq2}
\|f(x_1)-f(x_2)\|_p\le \big(\sup_{z\in \ell^{p_0}} \|\nabla f(z)\|_{\mathcal A}\big)
\big(\sup_{\|A\|_{\mathcal A}\le 1} \|A\|_{{\mathcal B}(\ell^p)}\big) \|x_1-x_2\|_p
\end{equation}
for all $x_1, x_2\in \ell^p$. Taking $x_1=x_0$ and $x_2=0$ in \eqref{errorestimate.tm.pf.eq2} and using $f(0)=0$ gives
 \begin{equation}\label{errorestimate.tm.pf.eq3}
\|y\|_p=\|f(x_0)\|_p\le \big(\sup_{z\in \ell^{p_0}} \|\nabla f(z)\|_{\mathcal A}\big)
\big(\sup_{\|A\|_{\mathcal A}\le 1} \|A\|_{{\mathcal B}(\ell^p)}\big) \|x_0\|_p.
\end{equation}
Hence the relative error estimate \eqref{errorestimate.tm.eq2} follows from \eqref{errorestimate.tm.eq1} and \eqref{errorestimate.tm.pf.eq3}.
\end{proof}

\section{Convergence of iteration methods}\label{algorithm.section}

In this section, we consider numerically solving the nonlinear functional  equation \eqref{nfe.def}.
We show in Theorem \ref{vancittertiteration.tm}  that
the Van-Cittert iteration  method has global  exponential convergence in $\ell^p$,  and
in Theorem \ref{newton.tm} that
the quasi-Newton  iteration method has local quadratic convergence in $\ell^p, 1\le p\le \infty$.
The  Van-Cittert method is the {\em only} iteration method, that we found, which has global exponential  convergence in $\ell^p, p\ne 2$,
for solving infinite-dimensional nonlinear functional equation \eqref{nfe.def}.
We are seeking to derive other iteration methods to solve the nonlinear functional
equation \eqref{nfe.def}, that are easily implementable and stable in the  presence of noises,  and  that  have global (or local) convergence in $\ell^p$ with $p\ne 2$.

\subsection{Global linear convergence of the Van-Cittert iteration method}

\begin{thm}\label{vancittertiteration.tm}
Let $2\le p_0\le \infty$, and let ${\mathcal A}$ be a  Banach algebra of infinite matrices with the property that it is
 a subalgebra of ${\mathcal B}(\ell^q)$ for all $1\le q\le \infty$,
  it contains the identity matrix  $I$, it is closed under the transpose operation,
   and
it satisfies \eqref{pinvertibility.tm.eq1} and
 the differential norm inequality \eqref{paracompactcondition} for some $C_0\in (0, \infty)$ and $\theta\in [0, 1)$.
 Assume that  $f$ is a continuous function on $\ell^{p_0}$ such that $f(0)=0$,
  its gradient $\nabla f$ is bounded and continuous in ${\mathcal A}$ and satisfies
 the strict monotonicity property \eqref{pinvertibility3.tm.eq1}.
Set $L=\sup_{z\in \ell^{p_0}}\|\nabla f(z)\|_{{\mathcal B}(\ell^2)}$ and let $m_0$ be the strict monotonicity constant in \eqref{pinvertibility3.tm.eq1}.
Given $1\le p\le p_0$ and $y\in \ell^p$,  take  an initial guess $x_0\in \ell^p$ and define
 $x_n\in \ell^p, n\ge 1$, iteratively by
\begin{equation}\label{vancittert.def}
x_{n}=x_{n-1}-\alpha (f(x_{n-1})-y),\ n\ge 1.
\end{equation}
Then for  any  $0<\alpha<m_0/(L+L^2)$, the sequence
$x_n, n\ge 0$, converges exponentially in $\ell^p$ to the unique true solution $f^{-1}(y)$ of the nonlinear
functional equation \eqref{nfe.def}. 
Moreover
\begin{equation}
\|x_n-f^{-1}(y)\|_{p}\le C_1  \|x_0-f^{-1}(y)\|_p\ r^n,\ \ n\ge 0,
\end{equation}
where
 $r>(1-\alpha+ \alpha^2 L+\alpha^2L^2)/(1-\alpha+\alpha m_0)$ and $C_1$ is a positive
 constant depending only on $\alpha, r,  p$ and ${\mathcal A}$.
\end{thm}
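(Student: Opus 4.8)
The plan is to track the error $x_n - x^\ast$, where $x^\ast = f^{-1}(y)$ is the unique solution of \eqref{nfe.def} in $\ell^p$ guaranteed by Theorem \ref{pinvertibility.tm3}, and to reduce the desired $\ell^p$-decay to a decay estimate in the algebra ${\mathcal A}$ that can be driven by the paracompactness technique already used in Proposition \ref{wienerlmforinfinitematrices.prop}. First I would note that the iterates defined by \eqref{vancittert.def} stay in $\ell^p$, since $f$ restricts to a Lipschitz self-map of $\ell^p$ by Theorem \ref{pinvertibility.tm3} and $y\in\ell^p$. Writing $f(x_{n-1})-f(x^\ast)=M_{n-1}(x_{n-1}-x^\ast)$ with the averaged gradient
$$M_{n-1}:=\int_0^1 \nabla f\big(x^\ast+s(x_{n-1}-x^\ast)\big)\,ds,$$
which converges as a Bochner integral in ${\mathcal A}$ because $\nabla f$ is bounded and continuous in ${\mathcal A}$, the recursion \eqref{vancittert.def} becomes $x_n-x^\ast=(I-\alpha M_{n-1})(x_{n-1}-x^\ast)$, so that
$$x_n-x^\ast=(I-\alpha M_{n-1})(I-\alpha M_{n-2})\cdots(I-\alpha M_0)(x_0-x^\ast).$$

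Next I would extract the two families of uniform bounds on which everything rests. Integrating the strict monotonicity \eqref{pinvertibility3.tm.eq1} in $s$ gives $c^T M_k c\ge m_0\, c^T c$ for all $c\in\ell^2$, while $\|M_k\|_{{\mathcal B}(\ell^2)}\le L$ and $\|M_k\|_{\mathcal A}\le\sup_{z}\|\nabla f(z)\|_{\mathcal A}$, all uniformly in $k$. Setting $B_k:=I-\alpha M_k$, these bounds give a uniform ${\mathcal A}$-norm bound on each factor and, crucially, an elementary $\ell^2$-estimate of $\|B_k\|_{{\mathcal B}(\ell^2)}$ yielding a block contraction $\|B_iB_{i-1}\cdots B_j\|_{{\mathcal B}(\ell^2)}\le\rho_0^{\,i-j+1}$, where $\rho_0$ is the ratio displayed in the statement and $\rho_0<1$ precisely when $0<\alpha<m_0/(L+L^2)$; this is the source of the stated threshold, and $r$ may be any number exceeding $\rho_0$. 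Because ${\mathcal A}$ embeds continuously into ${\mathcal B}(\ell^p)$, it then suffices to bound the ${\mathcal A}$-norm of the ordered product of $n$ factors by $C\,r^n$.

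This last ${\mathcal A}$-norm estimate is the heart of the argument and the step I expect to be the main obstacle: the naive submultiplicative bound on $\|B_{n-1}\cdots B_0\|_{\mathcal A}$ grows like a constant to the power $n$, so the $\ell^2$-contraction must be injected through the differential norm inequality \eqref{paracompactcondition}. Mimicking Proposition \ref{wienerlmforinfinitematrices.prop}, I would set $b_j:=\sup\{\|B_{i+2^j-1}\cdots B_i\|_{\mathcal A}: i\ge 0\}$, split a block of length $2^{j+1}$ into its two halves of length $2^j$, and apply \eqref{paracompactcondition} together with the block $\ell^2$-bound $\rho_0^{\,2^j}$ to obtain a recursion of the form $b_{j+1}\le 2C_0\,\rho_0^{\,2^j(1-\theta)}\,b_j^{1+\theta}$, the exact analogue of \eqref{winerlmforinfinitematrices.lm.pf.eq4}. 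Solving this recursion by the same iteration as in that proof produces, for a product of $n$ factors, a bound of order $\rho_0^{\,n}\,\kappa^{\,n^{\log_2(1+\theta)}}$; since $\theta<1$ forces $\log_2(1+\theta)<1$, the subexponential factor is absorbed into any rate $r>\rho_0$, giving $\|B_{n-1}\cdots B_0\|_{\mathcal A}\le C_r\,r^n$. Multiplying by the embedding constant $\sup_{\|A\|_{\mathcal A}\le 1}\|A\|_{{\mathcal B}(\ell^p)}$ yields $\|x_n-x^\ast\|_p\le C_1\,r^n\|x_0-x^\ast\|_p$, with $C_1$ depending only on $\alpha,r,p$ and ${\mathcal A}$, while uniqueness of the limit $x^\ast$ is already furnished by Theorem \ref{pinvertibility.tm3}. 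The delicate point is that the $B_k$ are distinct and non-commuting, so the recursion must be run over all consecutive blocks simultaneously, which is exactly why $b_j$ is defined as a supremum over starting indices and why only the uniform-in-$k$ bounds, rather than spectral data of a single fixed matrix, may be used.
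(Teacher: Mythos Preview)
Your strategy is sound and in fact simpler than the paper's, but the specific contraction rate you assert does not come out of your decomposition. From $B_k=I-\alpha M_k$ with $c^T M_k c\ge m_0\|c\|_2^2$ and $\|M_k\|_{{\mathcal B}(\ell^2)}\le L$, the direct estimate is
\[
\|B_k c\|_2^2=\|c\|_2^2-2\alpha\,c^T M_k c+\alpha^2\|M_k c\|_2^2\le(1-2\alpha m_0+\alpha^2 L^2)\|c\|_2^2,
\]
so $\|B_k\|_{{\mathcal B}(\ell^2)}\le\sqrt{1-2\alpha m_0+\alpha^2 L^2}$, not the ratio $\rho_0=(1-\alpha+\alpha^2 L+\alpha^2 L^2)/(1-\alpha+\alpha m_0)$ displayed in the theorem. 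This corrected rate is strictly less than one for $\alpha<2m_0/L^2$, a \emph{larger} range than $m_0/(L+L^2)$, and the rest of your argument (uniform ${\mathcal A}$-bound on each $B_k$, blockwise paracompactness via \eqref{paracompactcondition} applied to suprema over all starting indices, absorption of the subexponential factor into any $r$ exceeding the corrected rate) goes through verbatim. So your proof in fact yields a sharper conclusion than stated; the only defect is the unjustified identification of $\rho_0$ with the rate produced by your $B_k$.

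The paper obtains precisely the $\rho_0$ in the statement through a more elaborate algebraic rearrangement: rather than writing $x_{n+1}-x^\ast=(I-\alpha M_n)(x_n-x^\ast)$ directly, it expands the recursion into the implicit form $A_n(x_{n+1}-x^\ast)=B_n(x_n-x^\ast)$ with
\[
A_n=(1-\alpha)I+\alpha\int_0^1\nabla f(tx_{n+1}+(1-t)x^\ast)\,dt
\]
and a second matrix $B_n=(1-\alpha)I+\alpha^2(I-\int_0^1\nabla f(\cdot)\,ds)(\int_0^1\nabla f(\cdot)\,dt)$, and then runs the paracompactness argument on the products $A_n^{-1}B_n\cdots A_0^{-1}B_0$. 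Coercivity of $A_n$ gives $\|A_n^{-1}\|_{{\mathcal B}(\ell^2)}\le(1-\alpha+\alpha m_0)^{-1}$ and a direct bound yields $\|B_n\|_{{\mathcal B}(\ell^2)}\le 1-\alpha+\alpha^2 L(1+L)$; their product is $\rho_0$. Your single-factor route avoids this detour and, once the rate is corrected as above, is both shorter and delivers a better threshold for $\alpha$ and a better exponential rate than the paper's.
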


\begin{rem}{\rm  We may rewrite
 the  Van-Cittert iteration
\eqref{vancittert.def}   as  $x_n=Tx_{n-1}$, where  $Tx=x-\alpha (f(x)-y), x\in \ell^p$.
Thus  the Van-Cittert iteration \eqref{vancittert.def} is a Picard iteration to solve the nonlinear functional equation \eqref{nfe.def}.
The logic behind the Van-Cittert iteration
\eqref{vancittert.def}
 is that we define
 the new guess  by adding the difference $y-f(x_0)$   weighted by a relaxation factor $\alpha$ to the initial guess $x_0$
 if  the difference $y-f(x_0)$ between the observation $f(x_0)$ of the initial guess $x_0$  and the observed data
 $y$ is above a threshold,
and  we repeat the above iteration  until
the difference  between the observation  of the  guess  and the observed data
is below a threshold.
The  Van-Cittert iteration \eqref{vancittert.def} can be easily implemented and
 is fairly stable in the presence of noises in $\ell^p$, see  Section \ref{fri3.subsection} for  numerical simulations.
}\end{rem}

It has been established in \cite{suncasp05, suntams07} that  the Jaffard class ${\mathcal J}_\beta(\Lambda)$ in \eqref{jaffard.def}
satisfies the
paracompact condition \eqref{weakparacompactcondition}.
We can mimic the proof there  to  show that
 the Jaffard class ${\mathcal J}_\beta(\Lambda)$
satisfies the  differential norm inequality \eqref{paracompactcondition}.
Then  we have the following corollary by  Theorem  \ref{vancittertiteration.tm}.

\begin{cor}\label{specialalgorithm.cor}
  Let $\beta>d, \Lambda$ be a relatively-separated subset of $\Rd$, and
 let $p_0, p, f, L, \alpha$ be as in Theorem \ref{vancittertiteration.tm} with
 the Banach algebra ${\mathcal A}$ replaced by the Jaffard class ${\mathcal J}_\beta(\Lambda)$.
Then the sequence $x_n, n\ge 0$, in  the Van-Cittert iteration \eqref{vancittert.def}
converges exponentially in $\ell^p(\Lambda)$ to the true  solution
of the nonlinear functional equation \eqref{nfe.def} with the observed data $y$.
\end{cor}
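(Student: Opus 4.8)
The plan is to read Corollary \ref{specialalgorithm.cor} as the special case of Theorem \ref{vancittertiteration.tm} in which the abstract algebra ${\mathcal A}$ is the Jaffard class ${\mathcal J}_\beta(\Lambda)$ with $\beta>d$. Since $p_0,p,f,L$ and $\alpha$ are taken exactly as in Theorem \ref{vancittertiteration.tm}, all the hypotheses on the function $f$ (continuity on $\ell^{p_0}$, the normalization $f(0)=0$, bounded continuous gradient in the chosen algebra, and the strict monotonicity property \eqref{pinvertibility3.tm.eq1}) are inherited without change. Consequently the whole task reduces to checking that ${\mathcal J}_\beta(\Lambda)$ possesses the structural properties that Theorem \ref{vancittertiteration.tm} demands of ${\mathcal A}$; once these are in place, the convergence conclusion and its geometric rate transfer verbatim.

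Most of these structural properties have already been recorded in the excerpt. By the facts noted in Section \ref{wiener.section} immediately after \eqref{jaffard.def}, together with those in Section \ref{extension.section}, the class ${\mathcal J}_\beta(\Lambda)$ with $\beta>d$ is a Banach subalgebra of ${\mathcal B}(\ell^q(\Lambda))$ for every $1\le q\le\infty$, it contains the identity matrix $I$, and it is closed under the transpose operation. The band-limited approximability hypothesis \eqref{pinvertibility.tm.eq1} was verified in Section \ref{extension.section} via the exhaustion ${\mathcal N}_m=\{\lambda\in\Lambda:|\lambda|\le 2^m\}$. Thus the only ingredient still to be supplied is the differential norm inequality \eqref{paracompactcondition}.

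For that last point the plan is to adapt the argument of \cite{suncasp05, suntams07}, in which the weaker paracompact bound \eqref{weakparacompactcondition} for squares $A^2$ was proved. To treat a general product $AB$, I would estimate the weighted entry $(1+|\lambda-\lambda''|)^\beta|(AB)(\lambda,\lambda'')|$ by splitting the sum $\sum_{\lambda'}a(\lambda,\lambda')b(\lambda',\lambda'')$ according to whether $|\lambda-\lambda'|\le|\lambda-\lambda''|/2$ or not: on the first region $|\lambda'-\lambda''|\ge|\lambda-\lambda''|/2$, so the polynomial weight is absorbed by the decay of $B$ while the $a$-sum is controlled by a Schur/$\ell^2$ bound, and the complementary region is handled symmetrically with the roles of $A$ and $B$ reversed. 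The interpolated factors $\|B\|_{{\mathcal J}_\beta(\Lambda)}^\theta\|B\|_{{\mathcal B}(\ell^2)}^{1-\theta}$ (and their $A$-analogue) then appear by trading a fraction $\theta$ of the available polynomial decay against the remaining summation, exactly as in the cited works.

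\textbf{The main obstacle} is precisely this off-diagonal bookkeeping: one must keep the interpolation exponent $\theta\in[0,1)$ strictly below $1$ so that enough decay survives to bound both the near-diagonal and far-diagonal contributions, and one must invoke the relatively-separated property \eqref{rsset.def} of $\Lambda$ to obtain the uniform counting estimates that make the intermediate sums converge; everything beyond this is a matter of collecting constants. With \eqref{paracompactcondition} established for ${\mathcal J}_\beta(\Lambda)$, all hypotheses of Theorem \ref{vancittertiteration.tm} hold for ${\mathcal A}={\mathcal J}_\beta(\Lambda)$, and that theorem then delivers the asserted exponential convergence in $\ell^p(\Lambda)$ of the Van-Cittert iteration \eqref{vancittert.def} to the true solution of \eqref{nfe.def}.
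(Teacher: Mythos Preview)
Your proposal is correct and follows essentially the same route as the paper: the paper likewise observes (in the paragraph preceding the corollary) that one can mimic the argument of \cite{suncasp05, suntams07} to upgrade the paracompact bound \eqref{weakparacompactcondition} to the differential norm inequality \eqref{paracompactcondition} for ${\mathcal J}_\beta(\Lambda)$, after which the corollary is an immediate application of Theorem \ref{vancittertiteration.tm}. Your sketch of the splitting argument for the product $AB$ is a reasonable expansion of what the paper leaves implicit.
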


\begin{proof}[Proof of Theorem \ref{vancittertiteration.tm}]\
By Proposition \ref{wienerlmforinfinitematrices.prop},  ${\mathcal A}$ is an inverse-closed Banach subalgebra of ${\mathcal B}(\ell^2)$
that admits norm control.
This together with  Theorem \ref{pinvertibility.tm3} implies that $f$ is invertible on $\ell^p$. Hence $x^*:=f^{-1}(y)\in \ell^p$.
For the Van-Cittert iteration \eqref{vancittert.def},
\begin{eqnarray*} \label{vancittertiteration.tm.pf.eq1}
x_{n+1}-x^*
& = & (1-\alpha) (x_n-x^*)+\alpha \big(x_{n+1}- x^*-f(x_{n+1})+f(x^*)\big )\nonumber\\
& & -
\alpha \big(x_{n+1}-x_n-f(x_{n+1})+f(x_n)\big)\nonumber\\
& = & (1-\alpha) (x_n-x^*)+\alpha \big(x_{n+1}- x^*-f(x_{n+1})+f(x^*)\big)\nonumber\\
& & +\alpha^2\big (f(x_{n})-f(x^*)\big)+
\alpha \Big(f\big(x_n-\alpha\big(f(x_n)-f(x^*)\big)\big)-f(x_n)\Big).
\end{eqnarray*}
This leads to the following crucial equation:
\begin{equation} \label{vancittertiteration.tm.pf.eq2}
A_n (x_{n+1}-x^*)= B_n (x_n-x^*)  \quad{\rm for\  all} \  n\ge 0,
\end{equation}
where
\begin{equation} \label{vancittertiteration.tm.pf.eq3}
A_n=(1-\alpha)I+\alpha \int_0^1 \nabla f(t x_{n+1}+(1-t)x^*)dt,
\end{equation}
and
\begin{eqnarray}\label{vancittertiteration.tm.pf.eq4}
B_n & = & (1-\alpha)I+\alpha^2  \Big(I-\int_0^1 \nabla f\big(x_n- s \alpha(f(x_n)-f(x^*))\big) ds\Big)\nonumber\\
& & \times\Big(\int_0^1 \nabla f(tx_n+(1-t) x^*) dt\Big), \quad n\ge 0.
\end{eqnarray}

For infinite matrices $A_n$ and $B_n, n\ge 0$, we obtain from   \eqref{pinvertibility3.tm.eq1}, \eqref{vancittertiteration.tm.pf.eq3}
and \eqref{vancittertiteration.tm.pf.eq4}  that
\begin{equation} \label{vancittertiteration.tm.pf.eq5}
\left\{\begin{array}
{l}\|A_n\|_{{\mathcal B}(\ell^2)}\le 1+(L-1)\alpha, \\ 
\|A_n\|_{{\mathcal A}}\le (1-\alpha)\|I\|_{\mathcal A}+\alpha  \sup_{z\in \ell^{p_0}} \|\nabla f(z)\|_{{\mathcal A}}, \\
c^T A_n c\ge (1-\alpha +\alpha m_0) c^Tc\quad {\rm for \ all} \  c\in \ell^2,
\end{array}\right.
\end{equation}
and
\begin{equation} \label{vancittertiteration.tm.pf.eq6}
\left\{\begin{array}{l}
\|B_n\|_{{\mathcal B}(\ell^2)}\le 1-\alpha+ \alpha^2 L(1+L), \\
\|B_n\|_{{\mathcal A}}\le (1-\alpha)\|I\|_{\mathcal A}+\alpha^2 \big(
\sup_{z\in \ell^{p_0}} \|\nabla f(z)\|_{{\mathcal A}} \big)\\
\qquad \qquad \quad \times \big( \|I\|_{\mathcal A} +\sup_{z\in \ell^{p_0}} \|\nabla f(z)\|_{{\mathcal A}}\big).
\end{array}\right.\end{equation}
 For every $n\ge 0$,
 it follows from \eqref{vancittertiteration.tm.pf.eq5} that
  $A_n$ is invertible in $\ell^2$ and
  the operator norm  $\|(A_n)^{-1}\|_{{\mathcal B}(\ell^2)}$
  of its inverse  $(A_n)^{-1}$ is bounded by $(1-\alpha +\alpha m_0)^{-1}$.
The above property for infinite matrices $A_n, n\ge 0$, together with
\eqref{vancittertiteration.tm.pf.eq6}  and
   the norm control property of the Banach algebra ${\mathcal A}$ given in Proposition \ref{wienerlmforinfinitematrices.prop}, implies that
\begin{equation}\label{vancittertiteration.tm.pf.eq9}
\|A_n^{-1} B_n\|_{{\mathcal B}(\ell^2)}=\frac{1-\alpha+ \alpha^2 L(1+L)}{1-\alpha+\alpha m_0}:=r_1<1
\end{equation}
and
\begin{equation}\label{vancittertiteration.tm.pf.eq10}
\|A_n^{-1} B_n\|_{{\mathcal A}}\le D_1<\infty\quad {\rm for\ all} \ n\ge 0,\end{equation}
 where $D_1$ is a positive constant independent of $n\ge 0$.

Now we rewrite \eqref{vancittertiteration.tm.pf.eq2} as
\begin{equation*} 
x_{n+1}-x^*= A_n^{-1} B_n (x_n-x^*).
\end{equation*}
Applying the above formula iteratively gives
\begin{equation*}
x_{n+1}-x^*=A_n^{-1} B_n A_{n-1}^{-1} B_{n-1}\cdots A_0^{-1} B_0 (x_0-x^*).
\end{equation*}
Then the proof of  the exponential convergence of  $x_n, n\ge 0$, to $x^*$ reduces to showing that
there exist positive constant $C\in (0, +\infty)$ and $r\in (0,1)$ such that
\begin{equation}\label{vancittertiteration.tm.pf.eq13-}
\|A_n^{-1} B_n A_{n-1}^{-1} B_{n-1}\cdots A_0^{-1} B_0 \|_{\mathcal A}\le C r^n, \ \ n\ge 0.
\end{equation}
For $\theta \in (0, 1)$, set
\begin{equation}\label{vancittertiteration.tm.pf.eq13-0}
b_j= (2 C_0)^{1/\theta} r_1^{-j} \sup_{k\ge 0} \|A_{j+k-1}^{-1} B_{j+k-1} \cdots A_k^{-1} B_k\|_{{\mathcal A}},\end{equation}
where $C_0$ and $r_1$ are the constants in \eqref{paracompactcondition} and \eqref{vancittertiteration.tm.pf.eq9} respectively.
Then we obtain from  \eqref{vancittertiteration.tm.pf.eq9},
 \eqref{vancittertiteration.tm.pf.eq10}  and the differential norm property \eqref{paracompactcondition} that
\begin{eqnarray}\label{vancittertiteration.tm.pf.eq13}
b_{2j}& = &  (2C_0)^{1/\theta} r_1^{-2j} \sup_{k\ge 0}
\|A_{2j+k-1}^{-1} B_{2j+k-1} \cdots A_k^{-1} B_k\|_{{\mathcal A}}\nonumber\\
& \le &  (2 C_0)^{(1+\theta)/\theta} r_1^{-2j}
\max\big( \|A_{2j+k-1}^{-1} B_{2j+k-1}\cdots A_{j+k}^{-1} B_{j+k}\|_{{\mathcal A}}^{1+\theta},\nonumber\\
& & \qquad
\|A_{j+k-1}^{-1} B_{j+k-1}\cdots A_k^{-1} B_k\|_{{\mathcal A}}^{1+\theta}\big)\nonumber\\
& & \times \max\big( \|A_{2j+k-1}^{-1} B_{2j+k-1}\cdots A_{j+k}^{-1} B_{j+k}\|_{{\mathcal B}(\ell^2)}^{1-\theta},\nonumber\\
& & \qquad
\|A_{j+k-1}^{-1} B_{j+k-1}\cdots A_k^{-1} B_k\|_{{\mathcal B}(\ell^2)}^{1-\theta}\big)\nonumber\\
&  \le &    b_j^{1+\theta}
\end{eqnarray}
and
\begin{eqnarray}\label{vancittertiteration.tm.pf.eq14}
b_{2j+1} & \le  & (2C_0)^{1/\theta} r_1^{-2j-1} \sup_{k\ge 0}
\|A_{2j+k}^{-1} B_{2j+k}\|_{\mathcal A} \| A_{2j+k-1}^{-1} B_{2j+k-1} \cdots A_k^{-1} B_k\|_{{\mathcal A}}\nonumber\\
& \le &  (D_1/r_1) b_{2j}\le (D_1/r_1) b_j^{1+\theta}
\end{eqnarray}
for all $j\ge 1$.
Applying \eqref{vancittertiteration.tm.pf.eq13} and \eqref{vancittertiteration.tm.pf.eq14} repeatedly yields
\begin{eqnarray*}
b_n & \le  & (D_1/r)^{\epsilon_0} b_{\sum_{i=1}^l \epsilon_i 2^{i-1}}^{1+\theta} \le  \cdots\le  (D_1/r)^{\sum_{i=0}^{l-1} \epsilon_i (1+\theta)^i} b_{\epsilon_l}^{(1+\theta)^l}\nonumber\\
& \le &  (D_1/r)^{(1+\theta)^l/\theta} ( (2C_0)^{1/\theta} D_1/r)^{(1+\theta)^l}\le (2C_0D_1/r_1)^{(1+\theta)n^{\log_2(1+\theta)}/\theta},
\end{eqnarray*}
where $n=\sum_{i=0}^l \epsilon_i 2^i$  with $\epsilon_i\in \{0, 1\}$ and $\epsilon_l=1$.
Therefore for $\theta\in (0, 1)$,
\begin{eqnarray}\label{vancittertiteration.tm.pf.eq15}
\|A_n^{-1} B_n A_{n-1}^{-1} B_{n-1}\cdots A_0^{-1} B_0 \|_{\mathcal A}   & \le &    (2C_0)^{-1/\theta} r_1^n b_n\nonumber\\
& \le &
(2C_0)^{-1/\theta} r_1^n(C_0D_1/r_1)^{(1+\theta)n^{\log_2(1+\theta)}/\theta}
\end{eqnarray}
for all $n\ge 1$.

For $\theta=0$, we can mimic the above argument to obtain that
\begin{equation}\label{vancittertiteration.tm.pf.eq16}
\|A_n^{-1} B_n A_{n-1}^{-1} B_{n-1}\cdots A_0^{-1} B_0 \|_{\mathcal A}  \le
 r_1^n(2C_0D_1/r_1)^{1+\log_2 n} \quad {\rm for \ all}\ n\ge 1.
\end{equation}
 Hence \eqref{vancittertiteration.tm.pf.eq13-} follows  from \eqref{vancittertiteration.tm.pf.eq15}
and \eqref{vancittertiteration.tm.pf.eq16}
 by letting  $r\in (r_1, 1)$,
and the exponential convergence of $x_n$ in $\ell^p$ is established.
\end{proof}

\subsection{Local quadratic convergence of the quasi-Newton method}
In this subsection, we show that
the quasi-Newton iterative method \eqref{newtonalgorithm.def} has local R-quadratic convergence in $\ell^p$.

\begin{thm}
\label{newton.tm}
Let $2\le p_0\le \infty$,  let ${\mathcal A}$ be
a subalgebra of ${\mathcal B}(\ell^q)$ for all $1\le q\le \infty$ and an  subalgebra of ${\mathcal B}(\ell^2)$ that admits norm control,
 and  satisfy \eqref{pinvertibility.tm.eq1}, and let $f$ be a continuous function on $\ell^{p_0}$ such that $f(0)=0$,
  its gradient $\nabla f$ is bounded and continuous in ${\mathcal A}$, and satisfies
  the strictly monotonic property
  \eqref{pinvertibility3.tm.eq1} and
\begin{equation}\label{newton.tm.eq1}
\|\nabla f(x)-\nabla f(x')\|_{{\mathcal A}}\le C_1 \|x-x'\|_{p_0} \quad {\rm for \ all} \  x, x'\in \ell^{p_0}
\end{equation}
where $C_1\in (0, \infty)$. Given an observed data $y\in \ell^p$ and an initial guess $x_0\in \ell^p, 1\le p\le p_0$, define
the quasi-Newton iteration  by
\begin{equation}\label{newtonalgorithm.def}
 x_{n+1}=x_{n}- (\nabla f(x_{n}))^{-1} (f(x_{n})-y),\quad  n\ge 1.
\end{equation}
Then $x_n, n\ge 1$, converges R-quadratically  to  the solution $f^{-1}(y)$
of the nonlinear functional equation \eqref{nfe.def} with the observed data $y$ as there exists a positive constant $C$ such that
\begin{equation}
\|x_{n+1}-x_n\|_p\le C \|x_n-x_{n-1}\|_p^2\quad {\rm for \ all} \  n\ge 1,\end{equation}
provided that the initial guess $x_0$ is sufficiently close to the true solution $f^{-1}(y)$.
 \end{thm}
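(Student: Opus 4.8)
The plan is to put $x^* := f^{-1}(y)$ and to distil from the iteration \eqref{newtonalgorithm.def} a self-improving quadratic recursion for the consecutive differences $d_n := \|x_{n+1}-x_n\|_p$. By Proposition \ref{wienerlmforinfinitematrices.prop} the algebra ${\mathcal A}$ is inverse-closed and, by Theorem \ref{pinvertibility.tm3}, $f$ is invertible on $\ell^p$ with $x^*\in\ell^p$ and with $x\mapsto(\nabla f(x))^{-1}$ bounded in ${\mathcal A}$; set $M:=\sup_x\|(\nabla f(x))^{-1}\|_{\mathcal A}<\infty$, $L_{\mathcal A}:=\sup_z\|\nabla f(z)\|_{\mathcal A}$, and write $K_p:=\sup_{\|A\|_{\mathcal A}\le1}\|A\|_{{\mathcal B}(\ell^p)}$ for the embedding constant. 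A trivial induction (using $y,f(x_n)\in\ell^p$ and $(\nabla f(x_n))^{-1}\in{\mathcal A}\subset{\mathcal B}(\ell^p)$) shows every iterate lies in $\ell^p\subset\ell^{p_0}$, so all hypotheses apply at each $x_n$.

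The core of the argument is the Newton identity. The $(n-1)$-st step gives $y=f(x_{n-1})+\nabla f(x_{n-1})(x_n-x_{n-1})$, so that
\begin{equation*}
f(x_n)-y=\int_0^1\big[\nabla f(x_{n-1}+t(x_n-x_{n-1}))-\nabla f(x_{n-1})\big](x_n-x_{n-1})\,dt.
\end{equation*}
I would estimate the bracketed operator by the Lipschitz hypothesis \eqref{newton.tm.eq1}, whose ${\mathcal A}$-norm is at most $C_1t\|x_n-x_{n-1}\|_{p_0}$, pass from the ${\mathcal A}$-norm to the $\ell^p$-operator norm via $K_p$, and finally invoke the embedding $\|\cdot\|_{p_0}\le\|\cdot\|_p$ (valid since $p\le p_0$) to reach $\|f(x_n)-y\|_p\le\frac{1}{2}K_pC_1\|x_n-x_{n-1}\|_p^2$. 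Substituting this into $x_{n+1}-x_n=-(\nabla f(x_n))^{-1}(f(x_n)-y)$ and bounding $\|(\nabla f(x_n))^{-1}\|_{{\mathcal B}(\ell^p)}\le K_pM$ yields the quadratic relation $d_n\le C\,d_{n-1}^2$ with $C=\frac{1}{2}K_p^2MC_1$, which is exactly the claimed inequality.

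It remains to convert this recursion into genuine convergence, and here the smallness of $\|x_0-x^*\|_p$ enters. Since $f(x_0)-y=f(x_0)-f(x^*)=\int_0^1\nabla f(x^*+t(x_0-x^*))(x_0-x^*)\,dt$, I can bound $d_0\le K_p^2ML_{\mathcal A}\|x_0-x^*\|_p$, so choosing $x_0$ close enough to $x^*$ forces $Cd_0<1$. The relation $d_n\le Cd_{n-1}^2$ then gives $Cd_n\le(Cd_0)^{2^n}$, whence $\sum_n d_n<\infty$ and $(x_n)$ is Cauchy in $\ell^p$ with limit $\bar x$. Finally $\|f(x_n)-y\|_p\le K_pL_{\mathcal A}d_n\to0$, and the continuity of $f$ on $\ell^p$ (Theorem \ref{pinvertibility.tm3}) identifies $\bar x=f^{-1}(y)=x^*$, with the R-quadratic rate read off from $Cd_n\le(Cd_0)^{2^n}$.

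The step I expect to demand the most care is securing the $n$-independent bound $\|(\nabla f(x_n))^{-1}\|_{{\mathcal B}(\ell^p)}\le K_pM$: the quadratic recursion is worthless unless this inverse is controlled uniformly, and that control rests entirely on combining the pointwise $\ell^2$-bound $\|(\nabla f(x))^{-1}\|_{{\mathcal B}(\ell^2)}\le1/m_0$ from strict monotonicity \eqref{pinvertibility3.tm.eq1} with the norm-control property of ${\mathcal A}$ and then the algebra embedding into ${\mathcal B}(\ell^p)$. For $p=\infty$ one must also be gentle with topology, taking the limit and applying the continuity of $f$ in the weak-star sense exactly as in Theorem \ref{pinvertibility.tm3}; the quadratic estimate itself is purely norm-based and is unaffected.
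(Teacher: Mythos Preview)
Your argument is correct and follows the same route as the paper: both use the Newton identity $f(x_n)-y=f(x_n)-f(x_{n-1})-\nabla f(x_{n-1})(x_n-x_{n-1})$, bound the Taylor remainder via the Lipschitz condition \eqref{newton.tm.eq1}, and pass through the uniform ${\mathcal A}$-bound on $(\nabla f)^{-1}$ and the embedding ${\mathcal A}\hookrightarrow{\mathcal B}(\ell^p)$ to obtain $d_n\le C d_{n-1}^2$. Your write-up is in fact more complete than the paper's, which stops at the quadratic inequality; you spell out why $M=\sup_x\|(\nabla f(x))^{-1}\|_{\mathcal A}$ is finite (strict monotonicity plus norm control, cf.\ the proof of Theorem~\ref{pinvertibility.tm3}) and carry the recursion through to actual convergence to $f^{-1}(y)$. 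One small remark: you invoke Proposition~\ref{wienerlmforinfinitematrices.prop} for inverse-closedness, but the hypothesis of Theorem~\ref{newton.tm} already assumes ${\mathcal A}$ admits norm control directly (not the differential norm property \eqref{paracompactcondition}), so inverse-closedness is immediate from the definition and no appeal to that proposition is needed.
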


By Theorems \ref{pinvertibility.tm3} and
\ref{newton.tm}, we have the following corollary.

\begin{cor}\label{specialnewtonalgorithm.cor}
  Let $\beta>d, \Lambda$ be a relatively-separated subset of $\Rd$,
   ${\mathcal A}$ be the Jaffard class ${\mathcal J}_\beta(\Lambda)$,
 and let
$p_0$ and $f$ be  as in Theorem \ref{newton.tm}.
Then  the quasi-Newton iteration \eqref{newtonalgorithm.def}
has local R-quadratical convergence in $\ell^p, 1\le p\le p_0$.
\end{cor}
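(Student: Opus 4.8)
The plan is to obtain the corollary as a direct specialization of Theorem~\ref{newton.tm} to the algebra ${\mathcal A}={\mathcal J}_\beta(\Lambda)$. Since $p_0$ and $f$ are assumed to be exactly as in Theorem~\ref{newton.tm}, the function-side hypotheses ($f(0)=0$, $\nabla f$ bounded and continuous in ${\mathcal A}$, the strict monotonicity property \eqref{pinvertibility3.tm.eq1}, and the Lipschitz bound \eqref{newton.tm.eq1}) are already in force. Thus the whole task reduces to checking that the Jaffard class with $\beta>d$ satisfies the three structural requirements imposed on the abstract algebra ${\mathcal A}$ in Theorem~\ref{newton.tm}; once these are verified the conclusion is immediate.

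First I would record the needed properties of ${\mathcal J}_\beta(\Lambda)$, all of which have already appeared in the excerpt. (i) It is a Banach subalgebra of ${\mathcal B}(\ell^q(\Lambda))$ for every $1\le q\le\infty$, which is the Schur-test estimate for infinite matrices with polynomial off-diagonal decay of order $\beta>d$, as used in Section~\ref{extension.section}. (ii) It admits norm control in ${\mathcal B}(\ell^2)$: the Jaffard class contains the identity $I$, is closed under transpose, and satisfies the differential norm inequality \eqref{paracompactcondition} (hence its weak form \eqref{weakparacompactcondition}), so Proposition~\ref{wienerlmforinfinitematrices.prop} applies. (iii) It satisfies the band-approximability condition \eqref{pinvertibility.tm.eq1}, which follows from the nested exhaustion ${\mathcal N}_m=\{\lambda\in\Lambda:\ |\lambda|\le 2^m\}$, exactly as observed after Theorem~\ref{pinvertibility.tm}.

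With (i)--(iii) established, every hypothesis of Theorem~\ref{newton.tm} holds for ${\mathcal A}={\mathcal J}_\beta(\Lambda)$, and the theorem delivers the local R-quadratic convergence of the quasi-Newton iteration \eqref{newtonalgorithm.def} in $\ell^p$ for each fixed $p$ with $1\le p\le p_0$. Here Theorem~\ref{pinvertibility.tm3} (equivalently Corollary~\ref{specialpinvertibility.cor}) enters to guarantee that $f$ is continuously invertible on each $\ell^p(\Lambda)$, so that the limit $f^{-1}(y)$ named in the conclusion is well defined. Since every ingredient has already been proved, there is no genuine obstacle: the corollary is a bookkeeping specialization, and the only point deserving slight care is to keep the exponent $p$ fixed throughout, so that the operator norms $\sup_{\|A\|_{\mathcal A}\le1}\|A\|_{{\mathcal B}(\ell^p)}$ and the norm-control constants governing the convergence estimate remain finite and uniform in the iteration index.
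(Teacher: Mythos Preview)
Your proposal is correct and matches the paper's own approach: the paper gives no detailed proof but simply states that the corollary follows from Theorems~\ref{pinvertibility.tm3} and~\ref{newton.tm}, which is exactly what you have spelled out by verifying the three structural hypotheses on ${\mathcal J}_\beta(\Lambda)$ and then invoking those theorems.
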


\begin{proof} [Proof of Theorem \ref{newton.tm}]
By \eqref{newton.tm.eq1} and  \eqref{newtonalgorithm.def},
\begin{equation*}
\|x_1-x_0\|_p\le \big(\sup_{z\in \ell^{p_0}} \|(\nabla f(z))^{-1}\|_{\mathcal A}\big)
\big(\sup_{\|A\|_{\mathcal A}\le 1}\|A\|_{{\mathcal B}(\ell^p)}\big)
 \|f(x_0)-y\|_p
\end{equation*}
and
\begin{eqnarray*}   \|x_{n+1}-x_n\|_p  &  = &   \big\| (\nabla f(x_n))^{-1} (f(x_n)-y)\big\|_p\nonumber\\
& \le &   \big(\sup_{z\in \ell^{p_0}} \|(\nabla f(z))^{-1}\|_{\mathcal A}\big)
\big(\sup_{\|A\|_{\mathcal A}\le 1}\|A\|_{{\mathcal B}(\ell^p)}\big)\\
& & \times \|  f(x_n)-f(x_{n-1})-\nabla f(x_{n-1}) (x_n-x_{n-1})\big\|_p\nonumber\\
& \le & C_1\big(\sup_{z\in \ell^{p_0}} \|(\nabla f(z))^{-1}\|_{\mathcal A}\big)
\big(\sup_{\|A\|_{\mathcal A}\le 1}\|A\|_{{\mathcal B}(\ell^p)}\big)
 \|x_{n}-x_{n-1}\|_p^2
\end{eqnarray*}
for all $n\ge 1$. This proves the local R-quadratic convergence
of the quasi-Newton iteration method.
\end{proof}

\section{Instantaneous companding and  average sampling}\label{fri.section}

In this section, the theory established in  previous sections for the nonlinear functional equation
\eqref{nfe.def} will be used  to handle the  nonlinear sampling  procedure
 \eqref{sfpsi.def} of instantaneous companding and subsequently average sampling. In the first subsection,
we  apply the invertibility theorem (Theorem \ref{pinvertibility.tm3}) to establish  $\ell^p$-stability
of the nonlinear sampling  procedure
 \eqref{sfpsi.def},  while in the second subsection we use
the Van-Cittert iteration method in Theorem \ref{vancittert.def}
and the  quasi-Newton iteration method
in Theorem \ref{newton.tm} to recover
  signals  from
 averaging samples of their instantaneous companding.
We present some numerical simulations  in the last subsection to demonstrate the stable signal recovery in the presence of bounded noises.

\subsection{$\ell^p$-stability of the nonlinear sampling procedure \eqref{sfpsi.def}}

We say that
 $\Phi=(\phi_\lambda)_{\lambda\in \Lambda}$  is a {\em Riesz basis} for the space $V_p(\Phi)$  in \eqref{vp.def} if
       \begin{equation}\label{riesz.def}
0< \inf_{\|c\|_p=1} \|c^T\Phi\|_p\le \sup_{\|c\|_p=1} \|c^T\Phi\|_p<\infty.
  \end{equation}
Given two closed subspaces $V$ and $W$ of $L^2$, define
the {\em gap}
 $\delta (V, W)$ from  $V$ to $W$ by
\begin{equation}\label{angle.def}
\delta (V, W)=\sup_{\|h\|_2\le 1, h\in V} \inf_{\tilde h\in W} \|h-\tilde h\|_2=
\sup_{\|h\|_2\le 1, h\in V} \|h-P_W h\|_2,
\end{equation}
where $P_W$ is the projection operator onto $W$.

\begin{thm}\label{frirecovery.tm1}
Let  $\Lambda, \Gamma$ be  two relatively-separated subsets of $\Rd$, and let
 $\Phi:=(\phi_\lambda)_{\lambda\in \Lambda}$
  and $\Psi:=(\psi_\gamma)_{\gamma\in \Gamma}$  have polynomial decay
of order $\beta>d$, i.e.,
$$\sup_{\lambda\in \Lambda, x\in \Rd} |\phi(x)| (1+|x-\lambda|)^\beta +
\sup_{\gamma\in \Gamma, x\in \Rd}|\psi_\gamma (x)| (1+|x-\gamma|)^\beta<\infty,$$
 and
generate Riesz bases for $V_2(\Phi)$
    and $V_2(\Psi)$ 
    respectively.
    If the gap $\delta(V_2(\Phi), V_2(\Psi))$ from $V_2(\Phi)$ to $V_2(\Psi)$ is strictly less than one,
 and the companding function  $F$ is a continuously differential function on $\RR$ satisfying $F(0)=0$ and
\begin{equation}\label{frirecovery.tm.eq4}
\mu:=\sup_{t\in \RR} |1-mF'(t)|<
\frac{\sqrt{1-(\delta(V_2(\Phi), V_2(\Psi)))^2}}{ \delta(V_2(\Phi), V_2(\Psi))+\sqrt{1-(\delta(V_2(\Phi), V_2(\Psi)))^2}}
\end{equation}
for some nonzero constant $m$, then
 the nonlinear sampling procedure $S_{F, \Psi}$ in \eqref{sfpsi.def}
 is stable on $V_p(\Phi), 1\le p\le \infty$, i.e.,
 there exist positive constants $C_1$ and $C_2$ such that
\begin{equation}\label{frirecovery.tm.stability}
 C_1 \|h_1-h_2\|_p\le \| S_{F, \Psi}(h_1) - S_{F, \Psi} (h_2)\|_p\le C_2\|h_1-h_2\|_p
\end{equation}
for all $h_1, h_2\in  V_p(\Phi)$.
\end{thm}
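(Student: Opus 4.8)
The plan is to recast the stability of the nonlinear sampling map $S_{F,\Psi}$ on $V_p(\Phi)$ as an invertibility and two-sided Lipschitz statement for an associated self-map on the coefficient space $\ell^p(\Lambda)$, to which the invertibility theorem (Theorem \ref{pinvertibility.tm3}) applies. First I would exploit the polynomial decay of order $\beta>d$ of $\Phi$ and $\Psi$: the Gram matrix of $\Phi$, the Gram matrix of $\Psi$, and the cross-Gram matrix $(\langle\phi_\lambda,\psi_\gamma\rangle)_{\lambda,\gamma}$ all lie in the Jaffard class $\mathcal J_\beta$. Since $\Phi$ and $\Psi$ are Riesz bases of $V_2(\Phi)$ and $V_2(\Psi)$, these matrices are invertible on $\ell^2$, hence invertible within $\mathcal J_\beta$ by Wiener's lemma (the inverse-closedness behind Corollary \ref{specialnonlinearwiener.cr} and Remark \ref{linearequation.rem}). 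Consequently $\Phi$ is a Riesz basis for $V_p(\Phi)$ for every $1\le p\le\infty$ with uniform bounds $\|c^T\Phi\|_p\approx\|c\|_p$, and the analysis map $g\mapsto\langle g,\Psi\rangle$ is bounded below on $V_p(\Psi)$. These norm equivalences reduce \eqref{frirecovery.tm.stability} to two-sided $\ell^p$-stability of the self-map $f$ on $\ell^p(\Lambda)$ obtained from $f_{F,\Phi,\Psi}$ in \eqref{ffphipsi.def} followed by a fixed synthesis-into-$V_2(\Psi)$-and-dual-$\Phi$-analysis operator, which is a matrix in $\mathcal J_\beta$ built from the inverted cross-Gram, scaled by the constant $m$.

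Next I would verify the hypotheses of Theorem \ref{pinvertibility.tm3} for this self-map $f$ with $p_0=\infty$ and $\mathcal A=\mathcal J_\beta(\Lambda)$. That $f(0)=0$ is immediate from $F(0)=0$, and boundedness and continuity of $\nabla f$ in $\mathcal J_\beta$ follow from Lemma \ref{frirecovery.lem1} together with boundedness and continuity of $F'$ (note $F'$ is bounded since $\sup_t|1-mF'(t)|=\mu<\infty$). The decisive ingredient is the strict monotonicity property \eqref{pinvertibility3.tm.eq1}. In $L^2$-language this reduces to bounding below $\langle u,P_{V_2(\Psi)}(F'(h)u)\rangle$ for $u\in V_2(\Phi)$. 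Writing $mF'=1-\rho$ with $|\rho|\le\mu$ pointwise, the leading term is $\langle u,P_{V_2(\Psi)}u\rangle=\|P_{V_2(\Psi)}u\|_2^2$, which the gap hypothesis controls from below by $(1-\delta^2)\|u\|_2^2$ with $\delta=\delta(V_2(\Phi),V_2(\Psi))<1$ (Pythagoras), while the remainder is bounded by a multiple of $\mu\|u\|_2^2$. The threshold in \eqref{frirecovery.tm.eq4} is exactly what keeps the resulting monotonicity constant $m_0$ strictly positive once the oblique change of frame between $V_2(\Psi)$ and $V_2(\Phi)$ is accounted for; the denominator $\delta+\sqrt{1-(\delta)^2}$ records the cost of that oblique projection.

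With strict monotonicity secured, Theorem \ref{pinvertibility.tm3} yields that $f$ is continuously invertible on $\ell^p(\Lambda)$ for every $1\le p\le\infty$ and that both $f$ and $f^{-1}$ are uniformly Lipschitz in $\ell^p$. Transporting these two Lipschitz bounds through the Riesz-basis norm equivalences of the first step produces precisely the lower and upper inequalities in \eqref{frirecovery.tm.stability}. The upper bound is the routine direction, following from Lipschitzness of $F$ and boundedness of the synthesis and analysis matrices on $\ell^p$, so essentially the whole weight of the argument rests on the lower bound, that is, on the positivity of the strict monotonicity constant.

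I expect the main obstacle to be this monotonicity step: setting up the coefficient self-map so that its strict monotonicity is governed exactly by the quantity in \eqref{frirecovery.tm.eq4}, and verifying the positive lower bound uniformly over all base points $x\in\ell^\infty(\Lambda)$ rather than merely on $\ell^2$. The interplay between the gap lower bound for $P_{V_2(\Psi)}$ restricted to $V_2(\Phi)$ and the companding bound $\mu$, while keeping every constant independent of $p$, is the delicate part; once $m_0>0$ is established the remaining steps are direct applications of the already-proved theorems.
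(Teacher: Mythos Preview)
Your plan is the paper's strategy: compose $f_{F,\Phi,\Psi}$ with a fixed Jaffard-class matrix $\ell^p(\Gamma)\to\ell^p(\Lambda)$ to get a self-map $g$, use Lemma~\ref{frirecovery.lem1} and Wiener's lemma for $\mathcal J_\beta$ to see that $\nabla g$ is bounded and continuous in $\mathcal J_\beta(\Lambda)$, extract strict monotonicity from the gap and companding bounds, and invoke Theorem~\ref{pinvertibility.tm3}.

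The one place where the paper is sharper than your sketch is the choice of that matrix and the resulting monotonicity computation. The paper takes $R_{\Phi,\Psi}=A_{\Phi,\Phi}\bigl(A_{\Phi,\Psi}A_{\Psi,\Psi}^{-1}A_{\Psi,\Phi}\bigr)^{-1}A_{\Phi,\Psi}A_{\Psi,\Psi}^{-1}$, whose defining feature is $P_{V_2(\Phi)}\bigl(c^TR_{\Phi,\Psi}\Psi\bigr)=c^T\Phi$. Writing $u=c^T\Phi$ and $v=c^TR_{\Phi,\Psi}\Psi$, one then has $v-u\perp u$, so the $O(1)$ part of $\langle F'(h)u,v-u\rangle$ vanishes and only the $\mu$-small piece survives; together with $\|v-u\|_2\le\tan\theta\,\|u\|_2$ (where $\tan\theta=\delta/\sqrt{1-\delta^2}$) this gives $c^T\nabla g\,c\ge m^{-1}(1-\mu-\mu\tan\theta)\|u\|_2^2$, and the threshold \eqref{frirecovery.tm.eq4} is precisely $1-\mu(1+\tan\theta)>0$. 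Your quadratic form $\langle u,P_{V_2(\Psi)}(F'(h)u)\rangle$ corresponds instead to the matrix $A_{\Phi,\Psi}A_{\Psi,\Psi}^{-1}$; it also yields positivity, in fact under the weaker condition $\mu<\sqrt{1-\delta^2}$ (cf.\ Remark~\ref{vcl2.rem}), so your sketch proves the theorem, but \eqref{frirecovery.tm.eq4} is not ``exactly'' the threshold your form produces. Either way, once $m_0>0$ is secured the rest is as you describe.
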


\begin{rem}{\rm
We  remark that, in Theorem \ref{frirecovery.tm1},
 the assumption that the gap $\delta(V_2(\Phi), V_2(\Psi))$ from $V_2(\Phi)$ to $V_2(\Psi)$  is strictly less than one
is a necessary and sufficient condition for the $\ell^2$-stability of the   sampling procedure \eqref{sfpsi.def}
 without instantaneous companding (i.e., $F(t)\equiv t$) on $V_2(\Phi)$.
The above equivalence follows from
 \eqref{frirecovery.lem1.pf.eq2}, \eqref{frirecovery.lem1.pf.eq4} and
 \eqref{frirecovery.tm.pf.eq4}. For the sampling procedure \eqref{sfpsi.def}
 without instantaneous companding, the readers may refer to \cite{bns09, sunsiam06} and references therein.
}
\end{rem}

As a stable nonlinear sampling procedure is one-to-one,  from Theorem \ref{frirecovery.tm1} we obtain the  uniqueness
of the nonlinear sampling procedure \eqref{sfpsi.def},
which is established in  \cite{Faktor10} for $p=2$ when
$\mu<\frac{1-\delta(V_2(\Phi), V_2(\Psi))}{1+\delta(V_2(\Phi), V_2(\Psi))}$, a stronger assumption on $F$ than \eqref{frirecovery.tm.eq4}
in Theorem \ref{frirecovery.tm1}.

\begin{cor}
Let  $
 \Phi, \Psi$  and  $F$ be as in Theorem \ref{frirecovery.tm1}. Then any signal $h\in V_p(\Phi), 1\le p\le \infty$, is uniquely determined by its  nonlinear sampling
data $\langle F(h), \Psi\rangle$.
\end{cor}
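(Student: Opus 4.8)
The plan is to read off the uniqueness directly from the lower stability bound already established in Theorem \ref{frirecovery.tm1}, since injectivity of a map is an immediate consequence of a lower frame-type inequality. No new machinery is needed beyond the theorem just proved.

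First I would recall that, by the very definition of the nonlinear sampling procedure in \eqref{sfpsi.def}, the nonlinear sampling data of a signal $h$ is precisely $S_{F, \Psi}(h) = \langle F(h), \Psi\rangle$. Consequently, asserting that two signals $h_1, h_2\in V_p(\Phi)$ produce the same sampling data is equivalent to $S_{F, \Psi}(h_1) = S_{F, \Psi}(h_2)$, i.e.\ $\|S_{F, \Psi}(h_1) - S_{F, \Psi}(h_2)\|_p = 0$.

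Next I would invoke the left-hand inequality of the stability estimate \eqref{frirecovery.tm.stability}, which holds with some positive constant $C_1$ for all $h_1, h_2\in V_p(\Phi)$ and every $1\le p\le \infty$, namely
$$C_1 \|h_1-h_2\|_p\le \|S_{F, \Psi}(h_1) - S_{F, \Psi}(h_2)\|_p.$$
If $h_1$ and $h_2$ share the same nonlinear sampling data, the right-hand side vanishes, forcing $\|h_1-h_2\|_p = 0$ and hence $h_1 = h_2$. This shows that the map $h\longmapsto \langle F(h), \Psi\rangle$ is one-to-one on $V_p(\Phi)$, which is exactly the claimed unique determination.

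There is no genuine obstacle here: the entire substance has been absorbed into the lower bound of Theorem \ref{frirecovery.tm1}, whose proof in turn rests on the invertibility theorem (Theorem \ref{pinvertibility.tm3}) and the off-diagonal decay of the associated gradient. The only point worth spelling out is the identification $S_{F, \Psi}(h) = \langle F(h), \Psi\rangle$ from the definition \eqref{sfpsi.def}, after which the conclusion is immediate.
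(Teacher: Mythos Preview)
Your proposal is correct and matches the paper's own reasoning exactly: the paper does not give a separate proof of this corollary but simply notes, in the sentence introducing it, that ``a stable nonlinear sampling procedure is one-to-one,'' which is precisely the deduction you spell out from the lower bound in \eqref{frirecovery.tm.stability}.
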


Notice that $\delta(V, V)=0$ for any  closed subspace  $V$ of $L^2$. Then applying Theorem \ref{frirecovery.tm1} with
 the generator $\Phi$
of the space $V_2(\Phi)$  being taken as the average sampler $\Psi$ in the nonlinear sampling procedure \eqref{sfpsi.def}, we obtain
the following result.

\begin{cor} \label{phi=psi.cor}
Let  $\Lambda$ be a relatively-separated subset of $\Rd$, and let
 $\Phi:=(\phi_\lambda)_{\lambda\in \Lambda}$
   have polynomial decay
of order $\beta>d$ and
generate a Riesz basis for $V_2(\Phi)$.
    If   the companding function  $F$ has its derivative bounded away from zero and infinity, then
 the nonlinear sampling procedure $S_{F, \Psi}$  in \eqref{sfpsi.def}
  with $\Psi=\Phi$
 is stable on $V_p(\Phi), 1\le p\le \infty$.
\end{cor}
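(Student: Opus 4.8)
The plan is to deduce this directly from \thmref{frirecovery.tm1} by taking the average sampler $\Psi$ to be the generator $\Phi$ itself, so that $\Gamma=\Lambda$. First I would observe that all the structural hypotheses of \thmref{frirecovery.tm1} are then inherited for free: $\Gamma=\Lambda$ is relatively-separated, $\Psi=\Phi$ has polynomial decay of order $\beta>d$, and $\Psi=\Phi$ generates a Riesz basis for $V_2(\Psi)=V_2(\Phi)$ by assumption.

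The decisive point is that the gap degenerates. Since $P_{V_2(\Phi)}h=h$ for every $h\in V_2(\Phi)$, definition \eqref{angle.def} gives $\delta(V_2(\Phi),V_2(\Phi))=0$, which is certainly strictly less than one. I would then substitute $\delta=0$ into the right-hand side of \eqref{frirecovery.tm.eq4}, where it collapses to $\sqrt{1}/(0+\sqrt{1})=1$. Hence the companding hypothesis of \thmref{frirecovery.tm1} reduces to finding a single nonzero constant $m$ with $\mu=\sup_{t\in\RR}|1-mF'(t)|<1$.

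To produce such an $m$ I would use that $F'$ is bounded away from zero and infinity: as $F$ is continuously differentiable, $F'$ is continuous and, being bounded away from zero, cannot change sign, so $a\le|F'(t)|\le b$ with $0<a\le b<\infty$ and $F'$ of a fixed sign $s\in\{\pm1\}$. Taking $m=2s/(a+b)$ forces $mF'(t)\in[2a/(a+b),2b/(a+b)]$, so that $|1-mF'(t)|\le(b-a)/(a+b)<1$, giving $\mu<1$. If $F(0)\ne0$ I would first replace $F$ by $F-F(0)$, which changes neither $F'$ nor the stability inequality \eqref{frirecovery.tm.stability}, since the latter involves only the difference $S_{F,\Psi}(h_1)-S_{F,\Psi}(h_2)=\langle F(h_1)-F(h_2),\Psi\rangle$, in which an additive constant in $F$ cancels. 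With every hypothesis verified, \thmref{frirecovery.tm1} then delivers the two-sided estimate \eqref{frirecovery.tm.stability} on $V_p(\Phi)$ for all $1\le p\le\infty$, which is exactly the asserted stability. I do not expect a genuine obstacle here; the single step demanding care is checking that the degeneracy $\delta=0$ turns the sharp-looking bound \eqref{frirecovery.tm.eq4} into the elementary condition $\mu<1$, which the boundedness of $F'$ away from $0$ and $\infty$ supplies.
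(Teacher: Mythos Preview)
Your proposal is correct and follows exactly the route indicated in the paper, which simply observes that $\delta(V,V)=0$ and applies \thmref{frirecovery.tm1} with $\Psi=\Phi$. You have merely (and helpfully) spelled out the verification that boundedness of $F'$ away from $0$ and $\infty$ yields some $m$ with $\mu<1$, and disposed of the normalization $F(0)=0$ by subtracting a constant---details the paper leaves implicit.
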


\begin{rem}{\rm Notice that
 \begin{eqnarray*}
 \big(\inf_{s\in \RR} F'(s)\big) \|c_1-c_2\|_2^2 & \le &
(c_1-c_2)^T (S_{F, \Phi}(c_1^T\Phi)-S_{F, \Phi} (c_2^T\Phi))\\
& =& \int_{\Rd}\big( F(c_1^T\Phi(t))-F(c_2^T\Phi(t))\big) (c_1^T\Phi(t)-c_2^T\Phi(t)\big)dt\nonumber\\
 & \le &  \big(\sup_{s\in \RR} F'(s)\big) \|c_1-c_2\|_2^2
\end{eqnarray*}
for all $c_1, c_2\in V_2(\Phi)$.
Hence  the stability conclusion in Corollary \ref{phi=psi.cor} holds for $p=2$
 without the polynomial decay assumption on $\Phi$. This is established for  signals living in  the Paley-Wiener space 
 \cite{landau61, sandberg94} or a    shift-invariant space \cite{Faktor10}.
}\end{rem}

Given relatively-separated subsets $\Lambda$ and $\Gamma$, let the Jaffard class ${\mathcal J}_\beta(\Gamma, \Lambda)$
be the  family of infinite matrices $A:=(a(\gamma, \lambda))_{\gamma\in \Gamma, \lambda\in \Lambda}$ such that
$$\|A\|_{{\mathcal J}_\beta(\Gamma, \Lambda)}:=\sup_{\gamma\in \Gamma, \lambda} (1+|\gamma-\lambda|)^\beta  |a(\gamma, \lambda)| <\infty.$$
 Given two vectors $\Phi:=(\phi_\lambda)_{\lambda\in \Lambda}$
  and $\Psi:=(\psi_\gamma)_{\gamma\in \Gamma}$  of square-integrable functions, we define their {\em inter-correlation matrix}
  $A_{\Phi, \Psi}$ by
$A_{\Phi, \Psi}=\big(\langle \phi_{\lambda}, \psi_\gamma\rangle\big )_{\lambda\in \Lambda, \gamma\in \Gamma}$.
To prove
Theorem \ref{frirecovery.tm1}, we need a technical lemma.


\begin{lem}\label{frirecovery.lem1}
 Let $\Lambda, \Gamma$ be relatively-separated subsets of $\Rd$,
 $\Phi:=(\phi_\lambda)_{\lambda\in \Lambda}$
  and $\Psi:=(\psi_\gamma)_{\gamma\in \Gamma}$  have polynomial decay
of order $\beta>d$ and
generate Riesz bases of $V_2(\Phi)$
    and $V_2(\Psi)$ 
    respectively, and let the function $F$  satisfy $F(0)=0$ and have its derivative being continuous and bounded.
Then
\begin{itemize}
\item [{(i)}]  $\Phi$ and $\Psi$ generate Riesz bases for $V_p(\Phi)$ and $V_p(\Psi)$ respectively, where $1\le p\le\infty$.

\item [{(ii)}] The function $f_{F, \Phi, \Psi}$ in \eqref{ffphipsi.def} is well-defined on $\ell^p$ for all $1\le p\le \infty$, and
has its gradient $\nabla f_{F, \Phi, \Psi}$ being continuous and bounded in the Jaffard class ${\mathcal J}_\beta(\Gamma, \Lambda)$.
\end{itemize}
\end{lem}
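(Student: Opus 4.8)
The plan is to prove (i) and (ii) separately, with a single analytic engine shared by both: the estimate that the product of two functions with polynomial decay of order $\beta>d$, integrated against one another, again decays polynomially of order $\beta$ in the separation of their centers. Concretely, I would first record the convolution bound
\[
\int_{\Rd}(1+|x-\lambda|)^{-\beta}(1+|x-\gamma|)^{-\beta}\,dx\le C(1+|\lambda-\gamma|)^{-\beta},\qquad \beta>d,
\]
together with the amalgam/Young-type inequality that, for a relatively-separated index set and generators with polynomial decay of order $\beta>d$, the synthesis map $c\mapsto c^T\Phi$ is bounded from $\ell^p$ into $L^p$ and the analysis map $h\mapsto(\langle h,\psi_\gamma\rangle)_\gamma$ is bounded from $L^p$ into $\ell^p$, for all $1\le p\le\infty$.

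For (i), the synthesis bound gives the upper Riesz estimate $\|c^T\Phi\|_p\le C\|c\|_p$ directly. For the lower bound I would pass through the Gramian $G_\Phi=(\langle\phi_\lambda,\phi_{\lambda'}\rangle)_{\lambda,\lambda'}$: the kernel estimate shows $G_\Phi\in{\mathcal J}_\beta(\Lambda)$, while the Riesz-basis hypothesis on $V_2(\Phi)$ says precisely that $G_\Phi$ is bounded above and below on $\ell^2$, hence invertible in ${\mathcal B}(\ell^2)$. Inverse-closedness of the Jaffard class (Wiener's lemma for ${\mathcal J}_\beta(\Lambda)$, \cite{jaffard90,suncasp05}) then yields $G_\Phi^{-1}\in{\mathcal J}_\beta(\Lambda)$, so the canonical dual $\tilde\Phi:=G_\Phi^{-1}\Phi$ inherits polynomial decay of order $\beta$ (via a discrete analogue of the kernel estimate). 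The reproducing identity $c(\lambda)=\langle c^T\Phi,\tilde\phi_\lambda\rangle$ combined with boundedness of the analysis map for $\tilde\Phi$ gives $\|c\|_p\le C\|c^T\Phi\|_p$, the lower Riesz bound; the same argument applied to $\Psi$ completes (i).

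For (ii), well-definedness is immediate: $F(0)=0$ and $|F'|\le\|F'\|_\infty$ give $|F(t)|\le\|F'\|_\infty|t|$, so $\|F(c^T\Phi)\|_p\le\|F'\|_\infty\|c^T\Phi\|_p<\infty$, and boundedness of the analysis map for $\Psi$ places $f_{F,\Phi,\Psi}(c)=\langle F(c^T\Phi),\Psi\rangle$ in $\ell^p(\Gamma)$. Differentiating under the integral sign, I would identify the gradient as the matrix with entries
\[
a_{\gamma,\lambda}(c)=\langle F'(c^T\Phi)\,\phi_\lambda,\psi_\gamma\rangle=\int_{\Rd}F'(c^T\Phi(x))\,\phi_\lambda(x)\,\psi_\gamma(x)\,dx.
\]
Bounding $|F'|\le\|F'\|_\infty$ and inserting the polynomial decay of $\Phi,\Psi$ into the kernel estimate yields
\[
(1+|\gamma-\lambda|)^\beta\,|a_{\gamma,\lambda}(c)|\le C\,\|F'\|_\infty\,\|\Phi\|_{\infty,\beta}\,\|\Psi\|_{\infty,\beta}
\]
uniformly in $c$, which is exactly $\sup_c\|\nabla f_{F,\Phi,\Psi}(c)\|_{{\mathcal J}_\beta(\Gamma,\Lambda)}<\infty$.

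It remains to establish continuity of $c\mapsto\nabla f_{F,\Phi,\Psi}(c)$ into ${\mathcal J}_\beta(\Gamma,\Lambda)$, which is the only genuinely delicate point since $F'$ is assumed merely continuous, not uniformly so. The resolution is that the synthesis estimate forces $\|c^T\Phi\|_\infty\le C\|c\|_p$ (Hölder against $\sum_\lambda(1+|x-\lambda|)^{-\beta p'}$, finite by relative separation and $\beta>d$), so on any bounded set of $c$ the values $c^T\Phi(x)$ remain in a fixed compact interval on which $F'$ is uniformly continuous, and moreover $\|(c'-c)^T\Phi\|_\infty\le C\|c'-c\|_p\to0$. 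Then, using the kernel estimate once more,
\[
(1+|\gamma-\lambda|)^\beta\,|a_{\gamma,\lambda}(c')-a_{\gamma,\lambda}(c)|\le\Big(\sup_{x}|F'((c')^T\Phi(x))-F'(c^T\Phi(x))|\Big)\,C\,\|\Phi\|_{\infty,\beta}\,\|\Psi\|_{\infty,\beta},
\]
and letting $c'\to c$, uniform continuity of $F'$ on the compact range drives the first factor to zero uniformly in $\gamma,\lambda$, giving $\|\nabla f_{F,\Phi,\Psi}(c')-\nabla f_{F,\Phi,\Psi}(c)\|_{{\mathcal J}_\beta(\Gamma,\Lambda)}\to0$. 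I expect the main obstacle to be the lower Riesz bound in (i)—the true transfer of stability from $\ell^2$ to all $\ell^p$—because it is the one step that is not a direct estimate but requires Wiener's lemma for the Jaffard class and the consequent localization of the dual generator; by contrast the kernel estimate and the non-uniform continuity of $F'$ become routine once the boundedness of the range of $c^T\Phi$ is observed.
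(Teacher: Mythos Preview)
Your proposal is correct and follows essentially the same route as the paper: for (i) you use the synthesis bound for the upper Riesz estimate and Wiener's lemma for the Jaffard class applied to the Gramian to localize the dual and obtain the lower estimate, exactly as the paper does; for (ii) the paper also identifies $\nabla f_{F,\Phi,\Psi}(c)=(\langle F'(c^T\Phi)\phi_\lambda,\psi_\gamma\rangle)$, bounds it in ${\mathcal J}_\beta(\Gamma,\Lambda)$ via the same convolution kernel estimate, and deduces continuity from $\|\tilde c^T\Phi-c^T\Phi\|_\infty\to0$. Your treatment of the continuity step (restricting to a compact range to get uniform continuity of $F'$) and your well-definedness argument via $|F(t)|\le\|F'\|_\infty|t|$ are slightly more explicit than the paper's terse version, but the underlying mechanism is the same.
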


\begin{proof} 
(i) \quad We follow the arguments in \cite{sunaicm08}.  By the polynomial decay property for $\Phi$ and relatively-separatedness of the index set $\Lambda$, we have that
\begin{equation} \label{frirecovery.lem1.pf.eq1}
\sup_{\|c\|_p=1} \|c^T\Phi\|_p<\infty. 
\end{equation}
By  Riesz basis property for $V_2(\Phi)$ and the polynomial decay property for $\Phi$,
\begin{equation}\label{frirecovery.lem1.pf.eq2} (A_{\Phi, \Phi})^{-1}\in {\mathcal B}(\ell^2(\Lambda)) \quad {\rm and} \quad A_{\Phi, \Phi}\in {\mathcal J}_\beta(\Lambda).
\end{equation}
Hence  $(A_{\Phi, \Phi})^{-1}$ belong to the same
Jaffard class ${\mathcal J}_\beta(\Lambda)$ by
 the inverse-closedness of the Jaffard class ${\mathcal J}_\beta(\Lambda)$ in ${\mathcal B}(\ell^2(\Lambda))$, which
 together with the polynomial decay property of $\Phi$ implies that the dual Riesz basis
  $(A_{\Phi, \Phi})^{-1} \Phi$ has the polynomial decay of same order $\beta$.
Hence  $c=\langle c^T\Phi, (A_{\Phi, \Phi})^{-1} \Phi\rangle$ for all $c\in \ell^p$, which yields
 \begin{equation} \label{frirecovery.lem1.pf.eq3}
\inf_{\|c\|_p=1} \|c^T\Phi\|_p>0.
\end{equation}
Combining \eqref{frirecovery.lem1.pf.eq1} and \eqref{frirecovery.lem1.pf.eq3} proves the Riesz property for $V_p(\Phi)$.

The Riesz property for $V_p(\Psi)$ can be proved by using the  same argument to establish
\eqref{frirecovery.lem1.pf.eq1} and \eqref{frirecovery.lem1.pf.eq3} except with  $\Phi$ replaced by $\Psi$, $\Lambda$ by $\Gamma$,
\eqref{frirecovery.lem1.pf.eq2}
by
 \begin{equation}\label{frirecovery.lem1.pf.eq4}
( A_{\Psi, \Psi})^{-1}\in {\mathcal B}(\ell^2(\Gamma)) \quad {\rm and} \quad   A_{\Psi, \Psi}\in {\mathcal J}_\beta(\Gamma),
\end{equation}
which follows from   Riesz basis  property for $V_2(\Psi)$  and the polynomial decay property for $\Psi$.

(ii)\quad For $c=(c(\lambda))_{\lambda\in \Lambda}\in \ell^p(\Lambda)$, write $f_{F, \Phi, \Psi}(c)= (d(\gamma))_{\gamma\in \Gamma}$.
Then 
\begin{eqnarray*}
|d(\gamma)| & \le &  \int_{\Rd} |F(c^T\Phi(t)) \psi_{\gamma}(t)| dt \\
& \le & \|F'\|_\infty
 \|\Phi\|_{\infty, \beta}
\|\Psi\|_{\infty, \beta} \sum_{\lambda\in \Lambda} |c(\lambda)| \int_{\Rd}  (1+|t-\lambda|)^{-\beta} (1+|t-\gamma|)^{-\beta} dt\\
& \le &  C \|F'\|_\infty \|\Phi\|_{\infty, \beta}
\|\Psi\|_{\infty, \beta}
\sum_{\lambda\in \Lambda} |c(\lambda)|  (1+|\lambda-\gamma|)^{-\beta}\quad {\rm for \ all} \ \gamma\in \Gamma,
\end{eqnarray*}
where $C$ is  a positive constant. Thus
 $ f_{F, \Phi, \Psi}(c)\in \ell^p(\Gamma)$ for all $c\in \ell^p(\Lambda)$, which proves that the function $f_{F, \Phi, \Psi}$ is well-defined on $\ell^p(\Lambda)$.

 By direct calculation, we have that
 \begin{equation} \label{frirecovery.lem1.pf.eq5} \nabla f_{F, \Phi, \Psi}(c)= (\langle F'(c^T\Phi) \phi_\lambda, \psi_\gamma\rangle)_{\gamma\in \Gamma, \lambda\in \Lambda}\quad  {\rm for \ all} \ c\in\ell^p,
 \end{equation}
and
    $\lim_{\tilde c\to c \ {\rm in} \ \ell^p} \|{\tilde c}^T\Phi- c^T\Phi\|_\infty=0$.
 Thus
   $\nabla f_{F, \Phi, \Psi}(c), c\in \ell^p$, is bounded and continuous
 in 
  ${\mathcal J}_\beta(\Gamma, \Lambda)$.
 \end{proof}

\begin{proof}[Proof of Theorem \ref{frirecovery.tm1}] By the polynomial decay property for $\Phi$ and $\Psi$,
$A_{\Phi, \Phi}\in {\mathcal J}_\beta(\Lambda, \Lambda), A_{\Psi, \Psi}\in {\mathcal J}_{\beta}(\Gamma, \Gamma),
A_{\Psi, \Phi}\in {\mathcal J}_\beta(\Gamma, \Lambda)$ and $A_{\Phi, \Psi}\in {\mathcal J}_\beta(\Lambda, \Gamma)$.
From the argument in Lemma \ref{frirecovery.lem1}, we have  that
$(A_{\Phi, \Phi})^{-1}\in {\mathcal J}_\beta(\Lambda, \Lambda)$ and
 $(A_{\Psi, \Psi})^{-1}\in {\mathcal J}_\beta(\Gamma, \Gamma)$.
Thus
\begin{equation}\label{frirecovery.tm.pf.eq1}
A_{\Phi, \Psi}  (A_{\Psi, \Psi})^{-1}A_{\Psi, \Phi}\in {\mathcal J}_\beta(\Lambda, \Lambda).\end{equation}
By the  definition of the gap $\delta (V_2(\Phi), V_2(\Psi))$  from $V_2(\Phi)$ to $V_2(\Psi)$,
\begin{equation} \label{frirecovery.tm.pf.eq2}
\big(1-\big( \delta(V_2(\Phi), V_2(\Psi))\big)^2\big)\ \!
c^T A_{\Phi, \Phi} c 
 \le c^T A_{\Phi, \Psi}  (A_{\Psi, \Psi})^{-1}A_{\Psi, \Phi} c 
\le c^T A_{\Phi, \Phi} c
\end{equation}
for  all  $c\in \ell^2(\Lambda)$. Combining
\eqref{frirecovery.lem1.pf.eq2}, \eqref{frirecovery.tm.pf.eq1} and \eqref{frirecovery.tm.pf.eq2}, and applying
 Wiener's lemma for infinite matrices in  the Jaffard class ${\mathcal J}_\beta(\Lambda, \Lambda)$ \cite{jaffard90,suncasp05, suntams07}, we obtain
that $(A_{\Phi, \Psi}  (A_{\Psi, \Psi})^{-1}A_{\Psi, \Phi})^{-1}\in {\mathcal J}_\beta(\Lambda, \Lambda)$.
Thus the reconstruction matrix
 \begin{equation}
\label{frirecovery.tm.pf.eq4}
 R_{\Phi, \Psi}:=A_{\Phi, \Phi} \big(A_{\Phi, \Psi} (A_{\Psi, \Psi})^{-1} A_{\Psi, \Phi}\big)^{-1} A_{\Phi, \Psi} (A_{\Psi, \Psi})^{-1} \in {\mathcal J}_\beta(\Lambda, \Gamma).\end{equation}
Define a function $g_{F, \Phi, \Psi}$ on $\ell^p(\Lambda)$ by
\begin{equation}\label{frirecovery.tm.pf.eq5} g_{F, \Phi, \Psi}(x):=R_{\Phi, \Psi} f_{F, \Phi, \Psi}(x), x\in \ell^p,\end{equation}
where $f_{F, \Phi, \Psi}$ is given in \eqref{ffphipsi.def}.
Then the function $g_{F, \Phi, \Psi}$ is well-defined on $\ell^p(\Lambda)$ and satisfies $g(0)=0$, and its gradient
\begin{equation}\label{frirecovery.tm.pf.eq5+}
\nabla g_{F, \Phi, \Psi}(x)=
 R_{\Phi, \Psi} \langle \Psi, F'(x^T\Phi) \Phi\rangle, x\in \ell^p,\end{equation} is continuous and bounded in the Jaffard class ${\mathcal J}_\beta(\Lambda, \Lambda)$ by Lemma \ref{frirecovery.lem1}.
 From \eqref{frirecovery.tm.pf.eq2} it follow that
\begin{eqnarray} \label{frirecovery.tm.pf.eq6}
 c^T A_{\Phi, \Phi}  c & \le &  c^T A_{\Phi, \Phi} \big(A_{\Phi, \Psi} (A_{\Psi, \Psi})^{-1}A_{\Psi, \Phi}\big)^{-1}  A_{\Phi, \Phi}  c\nonumber
 \\
 & = &  c^T R_{\Phi, \Psi} A_{\Psi, \Psi} (R_{\Phi, \Psi})^T c =  \|c^TR_{\Phi, \Psi}\Psi\|_2^2\nonumber\\
 & \le & \big(1-\big(\delta(V_2(\Phi), V_2(\Psi))\big)^{2}\big)^{-1}\ \!
c^T A_{\Phi, \Phi}  c 
\quad
  {\rm for \ all}\ c\in \ell^2(\Lambda).
  \end{eqnarray}
  This together with \eqref{frirecovery.tm.eq4} implies that
  \begin{eqnarray*} \label{frirecovery.tm.pf.eq6+}
   d^T \nabla g_{F, \Phi, \Psi} (x) c   & = &   \langle F'(x^T\Phi) (c^T\Phi), d^TR_{\Phi,\Psi}\Psi\rangle\nonumber\\
   & \le &  m^{-1}(1+\mu) \|c^T\Phi\|_2 \|d^TR_{\Phi, \Psi}\Psi\|_2\nonumber\\
& \le &
m^{-1} B_1 (1+\mu)  \big(1-\big(\delta(V_2(\Phi), V_2(\Psi))\big)^{2}\big)^{-1/2}
\ \! \|c\|_2\|d\|_2
\end{eqnarray*}
for  any $x\in \ell^p(\Lambda)$ and $c, d\in \ell^2(\Lambda)$, where $B_1=\sup_{\|c\|_2=1} \| c^T\Phi\|_2$.
This proves that
\begin{equation}\label{frirecovery.tm.pf.eq7}
  \|\nabla g_{F, \Phi, \Psi}(x)\|_{{\mathcal B}(\ell^2(\Lambda))} \le m^{-1} B_1 \frac{1+\mu}{\sqrt{1-(\delta(V_2(\Phi), V_2(\Psi)))^2}}
\quad {\rm for \ all} \ x\in \ell^p(\Lambda).
\end{equation}
Notice that
\begin{equation*} \label{frirecovery.tm.pf.eq8}
P_{V_2(\Phi)} (c^T R_{\Phi, \Psi}\Psi)=c^T R_{\Phi, \Psi} A_{\Psi, \Phi} A_{\Phi, \Phi}^{-1} \Phi=c^T\Phi
\end{equation*}
for all $c\in \ell^2$ by direct calculation.
%
Hence for any $x\in \ell^p$ and $c\in \ell^2$,
\begin{eqnarray}\label{frirecovery.tm.pf.eq9}
   c^T \nabla g_{F, \Phi, \Psi}(x) c   & = & \langle F'(x^T\Phi) (c^T\Phi), c^T\Phi\rangle +  \langle F'(x^T\Phi) (c^T\Phi), c^T R_{\Phi, \Psi}\Psi-c^T\Phi\rangle\nonumber\\
    & \ge &    m^{-1}\big (1-\mu) \|c^T\Phi\|_2^2-m^{-1}\mu \|c^T\Phi\|_2 \|c^T R_{\Phi, \Psi}\Psi-c^T\Phi\|_2\nonumber\\
    &\ge  & m^{-1} (1-\mu-\mu \tan\theta)\ \!
    c^TA_{\Phi, \Phi} c\nonumber\\
    &\ge  & m^{-1} A_1 (1-\mu-\mu \tan \theta)\ \!
    c^T  c
\end{eqnarray}
by \eqref{frirecovery.tm.eq4} and  \eqref{frirecovery.tm.pf.eq5}, where $A_1=\inf_{\|c\|_2=1} \|c^T\Phi\|_2$ and
$$\tan \theta=\delta(V_2(\Phi), V_2(\Psi))/ \sqrt{ 1-(\delta(V_2(\Phi), V_2(\Psi)))^2} .$$
By the above argument, we see that the function $g_{F, \Phi, \Psi}$ satisfies all requirements for the function $f$ in
Theorem \ref{pinvertibility.tm3}.
Thus the function  $g_{F, \Phi, \Psi}$  is invertible in $\ell^p$ and the gradient of its inverse is bounded and continuous in ${\mathcal J}_\beta(\Lambda)$. Then there exists a positive constant $C_0$ such that
\begin{equation}\label{frirecovery.tm.pf.eq10}
\|c_1-c_2\|_p\le C_0 \|g_{F, \Phi, \Psi}(c_1)-g_{F, \Phi, \Psi}(c_2)\|_p\quad  {\rm for \ all} \ c_1, c_2\in \ell^p.
\end{equation}
Hence for any $h_1=c_1^T\Phi$ and $h_2=c_2^T\Phi\in V_p(\Phi)$,
\begin{eqnarray*}
\|h_1-h_2\|_p & \le &  \big(\sup_{\|c\|_p=1} \|c^T\Phi\|_p\big)
\|c_1-c_2\|_p
 \le   C
\|R_{\Phi, \Psi} (S_{F, \Psi} (h_1)-S_{F, \Psi} (h_2))\|_p   \\
& \le &
C
\|S_{F, \Psi} (h_1)-S_{F, \Psi} (h_2)\|_p
\end{eqnarray*}
by  \eqref{frirecovery.tm.pf.eq4}, \eqref{frirecovery.tm.pf.eq10} and Lemma \ref{frirecovery.lem1}.
This proves the first inequality in \eqref{frirecovery.tm.stability}.

 The second inequality in \eqref{frirecovery.tm.stability} follows from
\eqref{frirecovery.tm.pf.eq4} and Lemma \ref{frirecovery.lem1}.
\end{proof}

\subsection{Reconstructing  signals  from
 averaging samples of their instantaneous companding}

\begin{thm}\label{frirecovery.tm2}
Let  $\Lambda, \Gamma$ be  two relatively-separated subsets of $\Rd$,
 $\Phi:=(\phi_\lambda)_{\lambda\in \Lambda}$
  and $\Psi:=(\psi_\gamma)_{\gamma\in \Gamma}$  have polynomial decay
of order $\beta>d$ and
generate Riesz bases for $V_2(\Phi)$
    and $V_2(\Psi)$ 
    respectively,  the gap $\delta(V_2(\Phi), V_2(\Psi))$ from $V_2(\Phi)$ to $V_2(\Psi)$ be strictly less than one,
  the companding function  $F$ be a continuously differential function on $\RR$ satisfying $F(0)=0$ and
 \eqref{frirecovery.tm.eq4}
for some nonzero constant $m$, and let $R_{\Phi, \Psi}$ be as in
 \eqref{frirecovery.tm.pf.eq4}. Set
$$ m_0= m^{-1} \big(\inf_{\|c\|_2=1} \|c^T\Phi\|_2\big)
 \Big(1-\mu-\mu \frac{\delta(V_2(\Phi), V_2(\Psi))}{\sqrt{ 1-(\delta(V_2(\Phi), V_2(\Psi)))^2}}\Big)$$ and
 $$L=m^{-1}
 \big(\sup_{\|c\|_2=1} \|c^T\Phi\|_2\big) \frac{1+\mu}{\sqrt{ 1- (\delta(V_2(\Phi), V_2(\Psi)))^2}}.$$
Then
\begin{itemize}

\item [{(i)}]
Given any sampling data $y=\langle F(x_\infty^T\Phi), \Psi\rangle $
of a signal $x_\infty\in \ell^p$ and any   initial guess $x_0\in \ell^p(\Lambda), 1\le p\le \infty$, the Van-Cittert iteration
 $x_n, n\ge 0$, defined by
\begin{equation}\label{frirecovery.tm2.eq1}
x_{n+1}=x_n-\alpha R_{\Phi, \Psi} (\langle F(x_n^T\Phi), \Psi\rangle -y)
\end{equation}
 converges exponentially  to $x_\infty$ in $\ell^p(\Lambda)$ when the relaxation  factor $\alpha$ satisfies
$0<\alpha<m_0/(L+L^2)$.

\item [{(ii)}]  If $F''$ is continuous and bounded,
then
the quasi-Newton iteration defined  by
\begin{equation} \label{frirecovery.tm2.eq2}
 x_{n+1}=x_{n}- ( R_{\Phi, \Psi} \langle \Psi,  F'(x_{n}^T\Phi)\Phi\rangle)^{-1} R_{\Phi, \Psi} (\langle F(x_{n}^T\Phi), \Psi\rangle-y),\quad  n\ge 1,
\end{equation}
 converges quadratically  if the initial guess $x_0$ is so chosen  that $\|\langle F(x_{0}^T\Phi), \Psi\rangle-y\|_p$
 is sufficiently small.

\end{itemize}

\end{thm}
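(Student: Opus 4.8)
The plan is to reduce both assertions to the abstract iteration theorems of Section~\ref{algorithm.section} applied to the auxiliary function $g_{F,\Phi,\Psi}$ that already appeared in the proof of Theorem~\ref{frirecovery.tm1}. Recall that $g_{F,\Phi,\Psi}(x)=R_{\Phi,\Psi} f_{F,\Phi,\Psi}(x)$ was shown there to be well-defined on $\ell^p(\Lambda)$ with $g_{F,\Phi,\Psi}(0)=0$, to have gradient $\nabla g_{F,\Phi,\Psi}(x)=R_{\Phi,\Psi}\langle\Psi,F'(x^T\Phi)\Phi\rangle$ bounded and continuous in the Jaffard class ${\mathcal J}_\beta(\Lambda,\Lambda)$, and to satisfy the two-sided estimate $m_0\,c^Tc\le c^T\nabla g_{F,\Phi,\Psi}(x) c$ together with $\|\nabla g_{F,\Phi,\Psi}(x)\|_{{\mathcal B}(\ell^2(\Lambda))}\le L$ for all $x\in\ell^p(\Lambda)$, where $m_0$ and $L$ are precisely the constants in the present statement; see \eqref{frirecovery.tm.pf.eq5+}, \eqref{frirecovery.tm.pf.eq7} and \eqref{frirecovery.tm.pf.eq9}. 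Since $\beta>d$, the Jaffard class ${\mathcal J}_\beta(\Lambda)$ contains the identity, is closed under the transpose operation, is a subalgebra of ${\mathcal B}(\ell^q(\Lambda))$ for all $1\le q\le\infty$, admits norm control, and satisfies both \eqref{pinvertibility.tm.eq1} and the differential norm inequality \eqref{paracompactcondition}; hence it meets every hypothesis imposed on the algebra ${\mathcal A}$ in Theorems~\ref{vancittertiteration.tm} and~\ref{newton.tm}.

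For part (i) I would observe that, writing $\tilde y:=R_{\Phi,\Psi}y$, the given sampling data satisfies $g_{F,\Phi,\Psi}(x_\infty)=R_{\Phi,\Psi}\langle F(x_\infty^T\Phi),\Psi\rangle=\tilde y$, so that $x_\infty=g_{F,\Phi,\Psi}^{-1}(\tilde y)$ is the unique solution of $g_{F,\Phi,\Psi}(x)=\tilde y$. Because $g_{F,\Phi,\Psi}(x_n)-\tilde y=R_{\Phi,\Psi}\big(\langle F(x_n^T\Phi),\Psi\rangle-y\big)$, the recursion \eqref{frirecovery.tm2.eq1} is verbatim the Van-Cittert iteration \eqref{vancittert.def} for the function $g_{F,\Phi,\Psi}$ with observed data $\tilde y$. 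Applying Theorem~\ref{vancittertiteration.tm} with ${\mathcal A}={\mathcal J}_\beta(\Lambda)$, monotonicity constant $m_0$ and $L=\sup_z\|\nabla g_{F,\Phi,\Psi}(z)\|_{{\mathcal B}(\ell^2)}$ then yields exponential convergence of $x_n$ to $x_\infty$ in $\ell^p(\Lambda)$ for every $0<\alpha<m_0/(L+L^2)$, which is exactly assertion (i).

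For part (ii) the same identification turns \eqref{frirecovery.tm2.eq2} into the quasi-Newton iteration \eqref{newtonalgorithm.def} for $g_{F,\Phi,\Psi}$ with data $\tilde y$, and the smallness of $\|\langle F(x_0^T\Phi),\Psi\rangle-y\|_p$ forces, via boundedness of $R_{\Phi,\Psi}$ and the Lipschitz continuity of $g_{F,\Phi,\Psi}^{-1}$ from Theorem~\ref{pinvertibility.tm3}, that $x_0$ lies close to $x_\infty$. The only new ingredient is the gradient-Lipschitz hypothesis \eqref{newton.tm.eq1}, and this is where the extra assumption that $F''$ is continuous and bounded enters. To verify it I would write $\nabla g_{F,\Phi,\Psi}(x)-\nabla g_{F,\Phi,\Psi}(x')=R_{\Phi,\Psi}\langle\Psi,(F'(x^T\Phi)-F'((x')^T\Phi))\Phi\rangle$ and, since ${\mathcal J}_\beta$ is a Banach algebra and $R_{\Phi,\Psi}\in{\mathcal J}_\beta(\Lambda,\Gamma)$, reduce to bounding the Jaffard norm of $\langle\Psi,(F'(x^T\Phi)-F'((x')^T\Phi))\Phi\rangle$. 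By the mean value theorem $|F'(x^T\Phi(t))-F'((x')^T\Phi(t))|\le\|F''\|_\infty\,\|(x-x')^T\Phi\|_\infty$, and $\|(x-x')^T\Phi\|_\infty\le C\|\Phi\|_{\infty,\beta}\|x-x'\|_{p_0}$ by the relative-separatedness of $\Lambda$; the remaining kernel $\int|\psi_\gamma(t)||\phi_\lambda(t)|\,dt$ decays like $(1+|\gamma-\lambda|)^{-\beta}$ by the polynomial decay of $\Phi$ and $\Psi$. Combining these estimates gives \eqref{newton.tm.eq1} with $C_1$ proportional to $\|F''\|_\infty$, after which Theorem~\ref{newton.tm} delivers the claimed local R-quadratic convergence in $\ell^p(\Lambda)$. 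The main obstacle I anticipate is this last verification: one must control the perturbation of the gradient in the stronger Jaffard norm rather than merely in the operator norm, which forces the careful entry-wise estimate using both the boundedness of $F''$ and the uniform off-diagonal decay of the intercorrelation kernel.
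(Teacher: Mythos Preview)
Your proposal is correct and follows essentially the same route as the paper: you reduce both parts to the abstract iteration theorems (Theorems~\ref{vancittertiteration.tm} and~\ref{newton.tm}) applied to $g_{F,\Phi,\Psi}=R_{\Phi,\Psi}f_{F,\Phi,\Psi}$, invoking the estimates \eqref{frirecovery.tm.pf.eq7} and \eqref{frirecovery.tm.pf.eq9} already established in the proof of Theorem~\ref{frirecovery.tm1}, and for part~(ii) you verify the Lipschitz condition \eqref{newton.tm.eq1} on $\nabla g_{F,\Phi,\Psi}$ in the Jaffard norm via the mean value theorem and the uniform off-diagonal decay of $\langle\Psi,\Phi\rangle$, exactly as the paper does. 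Your explicit identification $\tilde y=R_{\Phi,\Psi}y$ and the remark that smallness of $\|\langle F(x_0^T\Phi),\Psi\rangle-y\|_p$ forces $x_0$ near $x_\infty$ via the Lipschitz bound on $g_{F,\Phi,\Psi}^{-1}$ are useful clarifications that the paper leaves implicit.
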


\begin{rem} \label{vcl2.rem}
{\rm In the proof of Theorem \ref{frirecovery.tm2}, we show that the Van-Cittert iteration \eqref{frirecovery.tm2.eq1}
converges exponentially for any data $y\in \ell^p(\Gamma)$,  not necessarily the observed data $\langle F(x^T\Phi), \Psi\rangle$
of a signal $x\in \ell^p(\Lambda)$.
Unlike local R-quadratic convergence of the quasi-Newton iteration \eqref{frirecovery.tm2.eq2} established in Theorem \ref{frirecovery.tm2},  the quasi-Newton iteration
\begin{equation} \label{frirecovery.tm2.eq2****}
 x_{n+1}=x_{n}- \alpha_n ( R_{\Phi, \Psi} \langle \Psi,  F'(x_{n}^T\Phi)\Phi\rangle)^{-1} R_{\Phi, \Psi} (\langle F(x_{n}^T\Phi), \Psi\rangle-y),\quad  n\ge 1,
\end{equation}
with appropriate varying step size $\alpha_n$  could have global convergence when $p=2$, see \cite{eldar08}.
For $p=2$, we also remark that
 the following modified Van-Cittert iteration
 \begin{equation}\label{vcl2.rem.eq1}
 x_{n+1}=x_n- m \tilde R
\big (\langle \Psi, F(x_n^T\Phi)\rangle -y\big), n\ge 0,
 \end{equation}
has exponential convergence
in $\ell^2$  with the assumption \eqref{frirecovery.tm.eq4} replaced by a weak condition:
\begin{equation}\label{campandingstability.tm.eq6}
\mu<
\sqrt{1-(\delta(V_2(\Phi), V_2(\Psi)))^2},
\end{equation}
where $\tilde R=
\big(A_{\Phi, \Psi} (A_{\Psi, \Psi})^{-1} A_{\Psi, \Phi}\big)^{-1} A_{\Phi, \Psi} (A_{\Psi, \Psi})^{-1}$.
The above exponential convergence can be proved  by following the argument in \cite{Faktor10}.
Without loss of generality, we assume that
 $\Phi$ (resp. $\Psi$) is an orthonormal basis for $V_2(\Phi)$ (resp. $V_2(\Psi)$).
Let $x_n^1, x_n^2, n\ge 0$, be  sequences in the Van-Cittert algorithm \eqref{vcl2.rem.eq1} with the observed data $y$ replaced by
$y_1, y_2\in \ell^2$. Then
it follows from  \eqref{frirecovery.tm.pf.eq4} and \eqref{vcl2.rem.eq1}  that
\begin{eqnarray}\label{compandingstability.tm.eq2}
\|x_{n+1}^1-x_{n+1}^2\|_2
&  \le &  \|x_n^1-x_{n}^2-  m  \tilde R ( \langle  F((x_n^1)^T\Phi), \Psi\rangle -\langle F((x_n^2)^T\Phi),\Psi\rangle ))\|_2\nonumber\\
& &  \quad+ |m| \|y_1-y_2\|_2 \nonumber\\
& \le &  \| \tilde R \langle (x_n^1-x_n^2)^T\Phi-m F((x_n^1)^T\Phi)+ m F((x_n^2)^T\Phi), \Psi\rangle\|_2\nonumber\\
& &  \quad+ |m| \|y_1-y_2\|_2\nonumber\\
& \le &  |m| \|y_1-y_2\|_2 + \big(1-(\delta(V_2(\Phi), V_2(\Psi)))^2\big)^{-1/2}\nonumber\\
& &\quad \times \|(x_n^1-x_n^2)^T\Phi-m F((x_n^1)^T\Phi)+m F((x_n^2)^T\Phi)\|_2\nonumber\\
&  \le &  \big(1-(\delta(V_2(\Phi), V_2(\Psi)))^2\big)^{-1/2} \mu \| x_n^1-x_n^2\|_2+
|m| \|y_1-y_2\|_2
\end{eqnarray}
for all $n\ge 0$.
Taking $y_2=y_1=y\in \ell^2$ in
\eqref{compandingstability.tm.eq2}  and using \eqref{campandingstability.tm.eq6}
shows that the map $V(x):=x- m \tilde R
\big (\langle \Psi, F(x^T\Phi)\rangle -y\big)$ is contractive on $\ell^2$ and hence
has a  unique fixed point $V^{-1}(y)$. Applying
\eqref{compandingstability.tm.eq2} with $y_1=y_2=y$ and $x_0^2=V^{-1}(y)$
leads to  the exponential convergence of  the Van-Cittert algorithm \eqref{vcl2.rem.eq1},
\begin{equation*}
\|x_n-V^{-1}(y)\|_2\le \mu^n \big(1-(\delta(V_2(\Phi), V_2(\Psi)))^2\big)^{-n/2}
\|x_0-V^{-1}(y)\|_2, \ n\ge 0.
\end{equation*}
Also  taking limit in \eqref{compandingstability.tm.eq2} shows that
 the  Van-Cittert algorithm \eqref{vcl2.rem.eq1}
has $\ell^2$-stability:
\begin{equation*}
\|V^{-1}(y_1)-V^{-1}(y_2)\|_2\le  \frac{\sqrt{1-(\delta(V_2(\Phi), V_2(\Psi)))^2}}{\sqrt{1-(\delta(V_2(\Phi), V_2(\Psi)))^2}- \mu}
\|y_1-y_2\|_2
\end{equation*}
for any $y_1, y_2\in \ell^2$.
From the above argument, we see that the limit  $V^{-1}(y)$ of the Van-Cittert algorithm
\eqref{vcl2.rem.eq1} satisfies
$\tilde R (\langle \Psi, F( (T^{-1}(y))^T\Phi)\rangle-y)=0$.
The  above consistence condition  is established in \cite{Faktor10} under additional assumption that $\tilde R$ is invertible.
}
\end{rem}

\begin{proof}[Proof of Theorem \ref{frirecovery.tm2}]
(i) \quad Let $g_{F, \Phi, \Psi}$ be the function on $\ell^p(\Lambda), 1\le p\le \infty$, defined in \eqref{frirecovery.tm.pf.eq5}.
By the argument in the proof of Theorem \ref{frirecovery.tm1},
 the function $g_{F, \Phi, \Psi}$  satisfies all requirements
 in Theorem \ref{vancittertiteration.tm}. Then applying Theorem \ref{vancittertiteration.tm} gives the exponential convergence
 of the sequence $x_n, n\ge 0$, in the Van-Cittert iteration  \eqref{frirecovery.tm2.eq1}, and
 the limit  follows from the invertibility of the function $g_{F, \Phi, \Psi}$ in $\ell^p$.

(ii)\quad
By \eqref{frirecovery.tm.pf.eq4} and  \eqref{frirecovery.tm.pf.eq5+}, we obtain
\begin{eqnarray}
& & \|\nabla g_{F, \Phi, \Psi}(x)-\nabla g_{F, \Phi, \Psi}(\tilde x)\|_{{\mathcal J}_\beta(\Lambda)}\nonumber\\
&\le  &  C \big\|\big\langle \Psi, \big(F'(x^T\Phi)-F'(\tilde x^T\Phi)\big) \Phi\big\rangle \big\|_{{\mathcal J}_\beta(\Gamma, \Lambda)}\nonumber\\
 & \le & C \|F^{\prime\prime}\|_\infty \|x^T\Phi-{\tilde x}^T\Phi\|_\infty \le C \|x-\tilde x\|_\infty
 \le C \|x-\tilde x\|_p 
\end{eqnarray}
for  all $x, \tilde x\in \ell^p$. 
Then the local R-quadratic convergence of the quasi-Newton iteration \eqref{frirecovery.tm2.eq2} follows from Theorem \ref{newton.tm}.
\end{proof}

\subsection{Numerical simulation}\label{fri3.subsection}
In this subsection, we  present some numerical simulations for  demonstration of the theoretical  results in Theorem \ref{frirecovery.tm2}.
Let
 $\Lambda=\{t_i\}_{i=1}^{40}$ contain
 randomly selected knots $t_i\in [-2, 2], 1\le i\le 40$, satisfying
$0.05\le \min_{1\le i\le 39} t_{i+1}-t_i \le \max_{1\le i\le 39} t_{i+1}-t_i \le 0.15$,
$\Gamma=\{-2+i/20\}_{i=1}^{80}$
be the set of uniform knots on $[-2, 2]$,
$\Phi=(\phi_{t_i})_{t_i\in \Lambda}$ be the column vector of interpolating cubic splines with knots $\Lambda$ satisfying
$\phi_{t_i}(t_j)=\delta_{ij}$ for all $1\le i, j\le 40$  where $\delta_{ij}$ is the Kronecker symbol,
  $\Psi=\{10\chi_{[-2+(i-1)/20, -2+i/20)}\}_{i=1}^{80}$ be the column vector of uniform sampling functionals on $\Gamma$,
  and $F(t)=\sin (\pi t/2)$ be the companding function. In the simulation
  we use
    $x_\infty=(c_1, \ldots, c_{40})^T/\max_{1\le i\le 40} |c_i|$
    as the original sequence to be recovered from its nonlinear samples $\langle F(x_\infty^T\Phi), \Psi\rangle$
    (plotted in Figure \ref{fig1.fig1}(b)),
where  $c_i\in [-1/2, 1/2], 1\le i\le 40$. 
The nonuniform cubic spline $x_\infty^T\Phi$ 
is plotted in Figure \ref{fig1.fig1}(a) in  a continuous line. We add piecewise random  noise $\epsilon=
(r(1) a(1), \ldots, r(80) a(80))^T \|\langle F(x_\infty^T\Phi), \Psi\rangle\|_\infty$ of noise level from 0\% to $2.5\%$, plotted in Figure \ref{fig1.fig1}(b)
in a dashed line,
where $r(i)\in [-0.05, 0.05], 1\le i\le 80$, are randomly selected and
$a(i), 1\le i\le 80$, take
value one when $i\in [1, 16)\cup [72, 80]$, two when $i\in [24, 40)\cup [48, 64)$,
and zero otherwise. 
We reconstruct the signal $x_\infty^\epsilon$ from the noisy  nonlinear samples $\langle F(x_\infty^T\Phi), \Psi\rangle+\epsilon$
by applying the Van-Cittert  iterative method \eqref{frirecovery.tm2.eq1}
with $x_0=0$ and $y=\langle F(x_\infty^T\Phi), \Psi\rangle+\epsilon$,
and plot the difference between  cubic splines $(x_\infty^\epsilon)^T \Phi$ and $x_\infty^T\Phi$ in Figure \ref{fig1.fig1}(a)
in a dashed line.
The numerical result shows that $\|x_\infty^\epsilon-x_\infty\|_\infty=0.003$, which illustrates
 the $\ell^\infty$-stability of the reconstruction procedure
in Theorem \ref{frirecovery.tm2}.
The nonlinear sampling procedure $S:x\longmapsto \langle F(x^T\Phi), \Psi\rangle$ is locally behaved as  entries in the column vectors $\Phi$ and $\Psi$ are
locally supported, which can be observed by comparing the shape
of the original cubic spline  $x_\infty^T\Phi$ in Figure \ref{fig1.fig1}(a) and the nonlinear samples $\langle F(x^T\Phi), \Psi\rangle$ in Figure \ref{fig1.fig1}(b).
For the sampling procedure without instantaneous companding (i.e. $F(t)=t$),
it is shown in \cite{bns09, sunsiam06} that the reconstruction procedure is also locally behaved, that is,
the amplitude of a signal  at any position is
essentially  determined by the adjacent  sampling values. We notice that
 the cubic splines $(x_\infty^\epsilon)^T\Phi$ and $x_\infty^T\Phi$ are almost perfectly matched
in the interval $[0, 0.4]$  where no noise is added to adjacent nonlinear samples, which
indicates that  the reconstruction procedure from the  nonlinear samples
is  locally behaved. 

\begin{figure}[hbt]
\centering
\begin{tabular}{ccc}
   \includegraphics[width=80mm]{./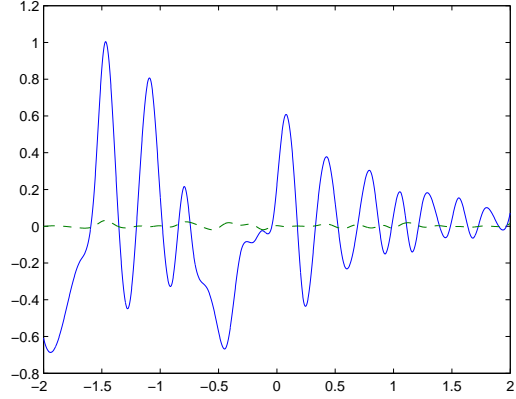}\\
   (a) \\
         \includegraphics[width=80mm]{./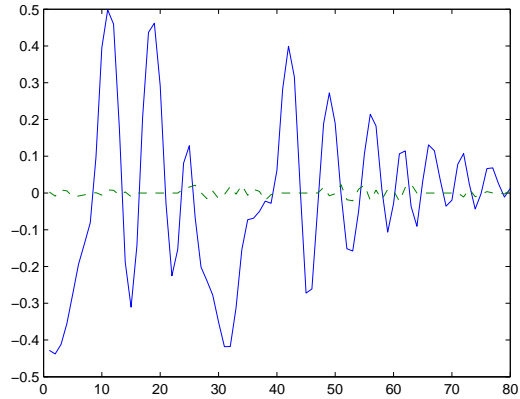} \\
 (b) 
   \end{tabular}
\caption{\small (a) The cubic spline $x_\infty^T\Phi$ is  plotted in a  continuous line, while it is
plotted in   a dashed line
 the difference between the original cubic spline  $x_\infty^T\Phi$
and the reconstructed cubic spline $x_\infty^\epsilon \Phi$ from noisy nonlinear sampled data
$\langle F(x_\infty^T\Phi), \Psi\rangle+\epsilon$.
(b) Nonlinear  sampled data $\langle F(x_\infty^T\Phi), \Psi\rangle$  is plotted in a continuous line, and
 the piecewise random noise $\epsilon$ with noise level between 0\% and 2.5\% is in a dashed line.}
\label{fig1.fig1}
\end{figure}

We apply the Van-Cittert iterative method \eqref{frirecovery.tm2.eq1} with zero initial guess $x_0=0$ and relaxation factor $\alpha=0.3$
 to recover the signal $x_\infty$
 from its nonlinear samples $\langle F(x_\infty^T\Phi), \Psi\rangle$.
The corresponding numerical error
is presented in  Table
\ref{cubicspline1.table}, where listed in the first column is the number of iteration, listed
in the second, third and fourth columns are the $\ell^\infty$-error $\|x_n-x_\infty\|_\infty$,
 the mean squared error $\|x_n-x_\infty\|_2$,
and the $\ell^1$-error $\|x_n-x_\infty\|_1$ respectively,
and listed in the last column is the $\ell^\infty$-error $\|\langle F(x_n^T\Phi), \Psi\rangle-
\langle F(x_\infty^T\Phi), \Psi\rangle\|_\infty$ of nonlinear sample data.

\begin{table}[ht]
\caption{Reconstruction error via the Van-Cittert iteration}
\begin{tabular}{ccccc}
\hline\hline
 Iteration& $\ell^\infty$-error&
$\frac{\ell^2-{\rm error}} {\small \sqrt{40}}$ & $\frac{ \ell^1-{\rm error}}{40}$
 & $\ell^\infty$-data  \\
\hline
5 &    1.9893 &     0.5049  &    0.3334    &  0.1168 \\
10 &     0.5230 &     0.1832 &     0.1533 &     0.0425\\
15 &     0.2102 &     0.0931 &    0.0846  &   0.0222\\
20 &    0.1053 &     0.0535  &   0.0503  &    0.0128\\
30 &     0.0343  &   0.0201 &   0.0194 &    0.0048\\
40 &    0.0128  &   0.0081 &    0.0078 &   0.0019\\
50 &   0.0050 &   0.0033 &    0.0032 &    0.0008
\\
\hline \hline
\end{tabular}
\label{cubicspline1.table}

\end{table}

A difficult problem is how to select the  relaxation factor $\alpha$. We tried to
change the relaxation factor in every iteration, c.f. \eqref{frirecovery.tm2.eq2****}. The ideas are to use smaller  relaxation factor if
the $\ell^\infty$-error
$\|\langle F(x_n^T\Phi), \Psi\rangle-
y\|_\infty$ between
the nonlinear sample data of the $n$-th iteration $x_n$ and  the given nonlinear sample data $y$
increases, and to use  larger relaxation factor when the ratio between
$\|\langle F(x_n^T\Phi), \Psi\rangle-
y \|_\infty$  and  $\|y\|_\infty$ gets smaller. The above modification of the Van-Cittert iteration method
 does provide faster
convergence, but no significant improvement is observed. We also apply the  combination of the Van-Cittert iteration method
and the quasi-Newton  iteration method.
As  shown in  Theorem  \ref{frirecovery.tm2},
the Van-Cittert iteration method  has slow global convergence and
the quasi-Newton iteration method has fast local convergence.
So  we may use the Van-Cittert iteration at the beginning
and the quasi-Newton iteration when the ratio between
$\|\langle F(x_n^T\Phi), \Psi\rangle-
y \|_\infty$  and  $\|y\|_\infty$ is below certain threshold, say $10\%$. The
numerical  error is
 shown in Table \ref{cubicspline2.table}, where
  the quasi-Newton iteration  method \eqref{frirecovery.tm2.eq2} is used from the 6-th iteration.
Comparing  the numerical results in
Table \ref{cubicspline1.table} and Table \ref{cubicspline2.table},
we may conclude  that, in order to recover a signal from its nonlinear sample data,
it is much better to use the combination of the
Van-Cittert iteration method  and the quasi-Newton iteration method
than to use the
Van-Cittert iteration method solely. 

  \begin{table}[ht]
\caption{Reconstruction error via a combination of the Van-Cittert iteration  and the quasi-Newton iteration}
\begin{tabular}{ccccc}
\hline\hline
 Iteration& $\ell^\infty$-error& $\frac{\ell^2-{\rm error}} {\small \sqrt{40}}$ & $\frac{ \ell^1-{\rm error}}{40}$ & $\ell^\infty$-data  \\
\hline
5 & 1.9893 &   0.5049  &  0.3334 &   0.1168 \\
6 &   0.2372  &  0.1175 &   0.1073 &   0.0896\\
7 &   0.2034 &    0.1033 &     0.0954 &     0.0250\\
8 &    0.0232 &     0.0170 &     0.0168 &     0.0222\\
9 &   0.0010 &    0.0008 &     0.0007 &     0.0037\\
10&    0.0000 &     0.0000 &     0.0000  &    0.0002
\\
\hline \hline
\end{tabular}
\label{cubicspline2.table}
\end{table}

\section{Local identification of innovation positions and qualification of amplitudes}\label{friinnovation.section}

In this section, we apply the theory established for the nonlinear functional equation \eqref{nfe.def} with $p=\infty$ in the first two parts of this paper indirectly
to  local identification of innovation positions  and qualification of amplitudes  of a signal
$h$ having the parametric representation \eqref{secondmodel.def}. In the first subsection, we show
that innovation positions and amplitudes of
a signal $h$ having the parametric representation \eqref{secondmodel.def}
can be {\bf precisely} identified and qualified provided that the linearization at approximate innovation positions and amplitudes is stable, see Theorem \ref{friidentification.tm}.
In the second subsection, we show that any signal $h$  in a perturbed shift-invariant space  with unknown (but small) perturbations
can be recovered exactly from its average samples $\langle h, \psi_m(\cdot-k)\rangle, 1\le m \le M, k\in \Zd$, provided that the generator  $\varphi$ of the perturbed
shift-invariant space and the average samplers $\psi_m, 1\le m\le M$, satisfy \eqref{blindcondition}.
In the last subsection, we  present some numerical simulations to demonstrate the  precise identification and  accurate qualification.

\subsection{Precise identification of innovation positions  and accurate qualification of amplitudes}
\begin{thm}\label{friidentification.tm}
Let    $\Lambda_0$ and $\Gamma$ be  relatively-separated subsets of
$\Rd$,  $c_0:=(c_0(\lambda))_{\lambda\in \Lambda_0}\in \ell^\infty(\Lambda_0)$, $\Psi=(\psi_\gamma)_{\gamma\in \Gamma}$ have polynomial decay of order $\beta>d$,
the impulse response $\varphi\in C^2(\RR^d)$ satisfy
\begin{equation}\label{friidentification.tm.eq2}
\|\varphi(\cdot) (1+|\cdot|)^\beta\|_\infty +\|\nabla \varphi(\cdot) (1+|\cdot|)^\beta\|_\infty+\|\nabla^2 \varphi (\cdot) (1+|\cdot|)^\beta\|_\infty<\infty,
\end{equation}
and let
 the companding function $F\in C^2(\RR)$  satisfy
\begin{equation} \label{friidentification.tm.eq3}
\|F'\|_\infty+\|F^{\prime\prime}\|_\infty<\infty.
\end{equation}
If 
  the nonlinear sampling process \eqref{localnonlinearsampling.def}
has its linearization  at $(\Lambda_0, c_0)$ being stable (i.e.,
the matrix $S_{ \Lambda_0, c_0}$ in \eqref{slambda0.def} satisfies \eqref{slambda0.stability}),
then there exists a positive constant $\delta_0$ such that any signal $h$, that has the parametric representation $h(t)=\sum_{\lambda\in \Lambda_0}
(c_{0}(\lambda)+c(\lambda))\varphi(t-\lambda-\sigma(\lambda))$
for some vectors $c:=(c(\lambda))_{\lambda\in \Lambda_0}$ 
 and $\sigma:=(\sigma(\lambda))_{\lambda\in \Lambda_0}$ 
 with $\max(\|c\|_\infty, \|\sigma\|_\infty)\le \delta_0$,
 can be stably recovered from
its 
sampling data $\langle F(h), \Psi\rangle$.
\end{thm}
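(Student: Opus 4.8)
The plan is to recover the perturbation pair $(\sigma, c)$, hence the signal $h$, by inverting the auxiliary function $f_{\Lambda_0, c_0, \Psi}$ of \eqref{flambda0.def} on a small ball about the origin. First I would record that this reduction is legitimate: by the stability assumption \eqref{slambda0.stability} the matrix $S_{\Lambda_0, c_0}$ of \eqref{slambda0.def} satisfies $A^2 I \le S_{\Lambda_0, c_0}^T S_{\Lambda_0, c_0} \le B^2 I$ on $(\ell^2(\Lambda_0))^{d+1}$, so its left inverse $(S_{\Lambda_0, c_0}^T S_{\Lambda_0, c_0})^{-1} S_{\Lambda_0, c_0}^T$ is bounded. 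Since $f_{\Lambda_0, c_0, \Psi}(\sigma, c) = (S_{\Lambda_0, c_0}^T S_{\Lambda_0, c_0})^{-1} S_{\Lambda_0, c_0}^T(\langle F(h), \Psi\rangle - \langle F(h_0), \Psi\rangle)$ is a bounded linear image of the known-shifted sampling data, a Lipschitz inverse of $f_{\Lambda_0, c_0, \Psi}$ will yield Lipschitz-stable recovery of $(\sigma, c)$ from $\langle F(h), \Psi\rangle$. The three facts I would then extract about $f := f_{\Lambda_0, c_0, \Psi}$ are that $f(0) = 0$, that $\nabla f(0) = I$, and that $\nabla f$ is Lipschitz in a Jaffard norm.

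The gradient computation is the technical heart, mirroring Lemma \ref{frirecovery.lem1}(ii) and the Lipschitz estimate in the proof of Theorem \ref{frirecovery.tm2}(ii). Differentiating $G(\sigma, c) := \langle F(h) - F(h_0), \Psi\rangle$, with $h = \sum_{\lambda \in \Lambda_0}(c_0(\lambda) + c(\lambda))\varphi(\cdot - \lambda - \sigma(\lambda))$, produces block entries $\langle F'(h)\varphi(\cdot - \lambda - \sigma(\lambda)), \psi_\gamma\rangle$ and $-(c_0(\lambda) + c(\lambda))\langle F'(h)\nabla\varphi(\cdot - \lambda - \sigma(\lambda)), \psi_\gamma\rangle$; at $(\sigma, c) = 0$ these are exactly the entries of $S_{\Lambda_0, c_0}$, so $\nabla G(0) = S_{\Lambda_0, c_0}$ and $\nabla f(0) = (S_{\Lambda_0, c_0}^T S_{\Lambda_0, c_0})^{-1} S_{\Lambda_0, c_0}^T S_{\Lambda_0, c_0} = I$. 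Using $\|F'\|_\infty < \infty$ and the order-$\beta$ decay of $\varphi, \nabla\varphi, \Psi$, the inter-correlation entries decay like $(1 + |\gamma - \lambda - \sigma(\lambda)|)^{-\beta} \le C(1 + |\gamma - \lambda|)^{-\beta}$ uniformly for $\|\sigma\|_\infty \le \delta_0$, so $\nabla G(\sigma, c) \in {\mathcal J}_\beta(\Gamma, \Lambda_0)$ with uniform bound; invoking $\|F''\|_\infty < \infty$ and the decay of $\nabla^2\varphi$ from \eqref{friidentification.tm.eq2} gives $\|\nabla G(\sigma, c) - \nabla G(\sigma', c')\|_{{\mathcal J}_\beta} \le C(\|\sigma - \sigma'\|_\infty + \|c - c'\|_\infty)$. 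Since $S_{\Lambda_0, c_0}^T S_{\Lambda_0, c_0} \in {\mathcal J}_\beta$ is invertible on $\ell^2$, inverse-closedness (Wiener's lemma) puts $(S_{\Lambda_0, c_0}^T S_{\Lambda_0, c_0})^{-1}$ back in ${\mathcal J}_\beta$, so $\nabla f = (S_{\Lambda_0, c_0}^T S_{\Lambda_0, c_0})^{-1} S_{\Lambda_0, c_0}^T \nabla G$ is bounded, continuous and Lipschitz in ${\mathcal J}_\beta$ as a product of Jaffard-class factors.

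With these in hand I would close by quantitative local inversion. Using the continuous embedding of ${\mathcal J}_\beta$ into ${\mathcal B}(\ell^\infty)$ valid for $\beta > d$, the Lipschitz bound gives $\|\nabla f(x) - I\|_{{\mathcal B}(\ell^\infty)} \le C\|x\|_\infty$, so I fix $\delta_0 > 0$ so small that this is at most $1/2$ on $\{\|x\|_\infty \le \delta_0\}$. For data $y$ with $\|y\|_\infty \le \delta_0/2$, the map $T_y(x) = y - (f(x) - x)$ then carries this ball into itself and is a $1/2$-contraction (estimate $T_y(x) - T_y(x')$ by integrating $\nabla f - I$ along the segment, which stays in the convex ball); Banach's fixed point theorem supplies the unique solution of $f(x) = y$, and the two-sided bound $\tfrac12\|x - x'\|_\infty \le \|f(x) - f(x')\|_\infty \le \tfrac32\|x - x'\|_\infty$ yields the stable, Lipschitz-invertible recovery of $(\sigma, c)$, hence of $h$. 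I expect the Jaffard bookkeeping of the middle paragraph — obtaining $(S_{\Lambda_0, c_0}^T S_{\Lambda_0, c_0})^{-1} \in {\mathcal J}_\beta$ from \eqref{slambda0.stability} via inverse-closedness, and controlling the shifted generators $\varphi(\cdot - \lambda - \sigma(\lambda))$ uniformly in $\lambda$ — to be the main obstacle; the contraction step is then routine. An alternative, closer to a direct use of Theorem \ref{pinvertibility.tm3}, would be to extend $f_{\Lambda_0, c_0, \Psi}$ to a globally strictly monotone function on $\ell^\infty$ agreeing with it near the origin and invoke that theorem at $p = \infty$, but the local contraction above avoids the extension construction.
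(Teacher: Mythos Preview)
Your proposal is correct and follows essentially the same route as the paper: both compute $\nabla f_{\Lambda_0,c_0,\Psi}$, check that it equals $I$ at the origin, invoke Wiener's lemma in the Jaffard class to place $(S_{\Lambda_0,c_0}^T S_{\Lambda_0,c_0})^{-1}$ in ${\mathcal J}_\beta$, bound $\|\nabla f(x)-I\|$ by $C(\|\sigma\|_\infty+\|c\|_\infty)$ on a small ball, and read off the local bi-Lipschitz inequality. The only cosmetic differences are that the paper uses the threshold $1/3$ instead of $1/2$ and obtains the two-sided bound on $f$ directly from the mean value inequality rather than an explicit Banach contraction, and it records the easy upper bound $\|\langle F(h_1)-F(h_2),\Psi\rangle\|_\infty\le C(\|c_1-c_2\|_\infty+\|\sigma_1-\sigma_2\|_\infty)$ separately (which in your write-up is implicit in the uniform Jaffard bound on $\nabla G$).
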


\begin{proof} 
To prove the desired stable recovery, we need to find a small number $\delta_0$ and two positive constants $C_1$ and $C_2$  such that
\begin{eqnarray}\label{friidentification.tm.eq5}
C_1 (\|c_1- c_2\|_\infty+\|\sigma_1-\sigma_2\|_\infty)
& \le &
\| \langle F(h_1)-F(h_2), \Psi\rangle \|_\infty\nonumber\\
 & \le &
C_2 (\|c_1- c_2\|_\infty+\|\sigma_1-\sigma_2\|_\infty)
\end{eqnarray}
for all  vectors  $c_i:=(c_{i}(\lambda))_{\lambda\in \Lambda_0}$ and $\sigma_i:=(\sigma_{i}(\lambda))_{\lambda\in \Lambda_0}$
with $\max(\|c_i\|_\infty,\|\sigma_i\|_\infty)\le \delta_0$,
where $h_i=\sum_{\lambda\in \Lambda_0}
(c_{0}(\lambda)+c_{i}(\lambda)) \varphi (\cdot-\lambda-\sigma_{i}(\lambda)), i=1, 2$.

By  \eqref{friidentification.tm.eq3} and the
polynomial decay property for the average sampler $\Psi$, we obtain  
\begin{eqnarray*} & &
\| \langle F(h_1)-F(h_2), \Psi\rangle\|_\infty\nonumber\\
& \le &  C \|F'\|_\infty \|\Psi\|_{\infty, \beta}
\|c_0+c_2\|_\infty
\Big\|\sum_{\lambda\in \Lambda_0}   \big|\varphi\big(\cdot-\lambda-\sigma_1(\lambda)\big)-\varphi\big(\cdot-\lambda-\sigma_2(\lambda)\big)\big|\Big\|_\infty
\nonumber\\
& & + C \|F'\|_\infty \|\Psi\|_{\infty, \beta} \|c_1-c_2\|_\infty\Big\|\sum_{\lambda\in \Lambda_0}  |\varphi(\cdot-\lambda-\sigma_1(\lambda))|\Big\|_\infty\nonumber\\
&\le  &   C (1+ \delta_0)^\beta (\|c_0\|_\infty+\delta_0)  \|F'\|_\infty  \|\Psi\|_{\infty, \beta} \|\nabla \varphi(\cdot) (1+|\cdot|)^\beta \|_\infty
  \|\sigma_1-\sigma_2\|_\infty
\nonumber\\
& &   +  C (1+ \delta_0)^\beta \|F'\|_\infty \|\Psi\|_{\infty, \beta}  \|\varphi(\cdot)(1+|\cdot|)^\beta\|_\infty  \|c_1-c_2\|_\infty.
%
%
\end{eqnarray*}
This proves the second inequality in
 \eqref{friidentification.tm.eq5}.


Now we prove the first inequality in
 \eqref{friidentification.tm.eq5}.
 From   \eqref{friidentification.tm.eq2}, \eqref{friidentification.tm.eq3} and the polynomial decay property for the average sampler $\Psi$, it follows that
   \begin{eqnarray}\label{friidentification.tm.pf.eq1+}
  & & |\langle F'(h) \varphi(\cdot-\lambda-\sigma(\lambda)), \psi_\gamma\rangle|+|\langle F'(h)  \nabla \varphi(\cdot-\lambda-\sigma(\lambda)), \psi_\gamma\rangle|\nonumber\\
  & \le & C \|F'\|_\infty\big( \|\varphi(\cdot)(1+|\cdot|)^\beta\|_\infty +\|\nabla \varphi(\cdot) (1+|\cdot|)^\beta\|_\infty\big)
  \|\Psi\|_{\infty, \beta} (1+\|\sigma\|_\infty)^\beta\nonumber\\
& & \times
(1+|\gamma-\lambda|)^{-\beta} \quad {\rm for \ all} \ \gamma\in \Gamma \ {\rm and} \ \lambda\in \Lambda_0.
 \end{eqnarray}
Hence
 the matrix\begin{equation*} \label{friidentification.tm.pf.eq1}
A\Big(\begin{matrix} \sigma\\ c\end{matrix}\Big):=\left(\begin{array} {c}
 - (c_0(\lambda)+c(\lambda))\langle F'(h)  \nabla \varphi(\cdot-\lambda-\sigma(\lambda)), \psi_\gamma\rangle\\
 \langle F'(h) \varphi(\cdot-\lambda-\sigma(\lambda)), \psi_\gamma\rangle\\\end{array}\right)_{\gamma\in \Gamma, \lambda\in \Lambda_0}
\end{equation*}
is well-defined for any $c:=(c(\lambda))_{\lambda\in \Lambda_0}\in \ell^\infty(\Lambda_0)$ and $\sigma:=(\sigma(\lambda))_{\lambda\in \Lambda_0}\in (\ell^\infty(\Lambda_0))^d$,
   where  $h=\sum_{\lambda\in \Lambda_0} (c_{0}(\lambda)+c(\lambda)) \varphi (\cdot-\lambda-\sigma(\lambda))$.
  Moreover
  $\big (A\big(\begin{matrix} \sigma\\ c\end{matrix}\big)\big)^T A\big(\begin{matrix} \sigma\\ c\end{matrix}\big)$ belongs to the Jaffard class
 $({\mathcal J}_\beta(\Lambda_0))^{d+1}$,
 and
  \begin{equation} \label{friidentification.tm.pf.eq4}
 \Big \| \Big (A\Big(\begin{matrix} \sigma\\ c\end{matrix}\Big)\Big)^T A\Big(\begin{matrix} \sigma\\ c\end{matrix}\Big)\Big \|_{({\mathcal J}_\beta(\Lambda_0))^{d+1}}  \le  C
 (1+\|\sigma\|_\infty)^{2\beta} (1+\|c\|_\infty)^2
 \end{equation}
   by
 \eqref{friidentification.tm.pf.eq1+} and the relatively-separatedness of the sets $\Lambda_0$ and $\Gamma$.
Notice that $A\big(\begin{matrix} \sigma\\ c\end{matrix}\big)$ becomes the matrix $S_{\Lambda_0, c_0}$ in \eqref{slambda0.def}
when $\sigma=0$ and $c=0$.
Then
$ (S_{\Lambda_0, c_0})^T S_{\Lambda_0, c_0}$ belongs to
the Jaffard class $({\mathcal J}_\beta(\Lambda_0))^{d+1}$ by  \eqref{friidentification.tm.pf.eq4}. This, together with
the stability condition
\eqref{slambda0.stability} and  the Wiener's lemma   for infinite matrices in  the Jaffard class \cite{jaffard90, suncasp05},
proves that $( (S_{\Lambda_0, c_0})^TS_{\Lambda_0, c_0})^{-1} $
 belongs to the Jaffard class
$({\mathcal J}_\beta(\Lambda_0))^{d+1}$; i.e.,
\begin{equation} \label{friidentification.tm.pf.eq6}
\|( (S_{\Lambda_0, c_0})^T S_{\Lambda_0, c_0})^{-1}\|_{({\mathcal J}_\beta(\Lambda_0))^{d+1}}\le D_1
\end{equation}
for some positive constant $D_1$. 
Using similar argument to prove
\eqref{friidentification.tm.pf.eq1+}, we get
\begin{eqnarray} \label{friidentification.tm.pf.eq8}
& &  |\langle F'(h) \varphi(\cdot-\lambda-\sigma(\lambda)), \psi_\gamma\rangle-\langle F'(h_0) \varphi(\cdot-\lambda), \psi_\gamma\rangle|
\nonumber\\
& & + |\langle F'(h) \nabla \varphi(\cdot-\lambda-\sigma(\lambda)), \psi_\gamma\rangle-\langle F'(h_0) \nabla \varphi(\cdot-\lambda), \psi_\gamma\rangle|\nonumber\\
& \le &  C
 (1+\|\sigma\|_\infty)^\beta (\|\sigma\|_\infty+\|c\|_\infty)
(1+|\lambda-\gamma|)^{-\beta}
\end{eqnarray}
for all $\lambda\in \Lambda_0$ and $\gamma\in \Gamma$.
Therefore
\begin{eqnarray} \label{friidentification.tm.pf.eq11}
  & & \big\| ( (S_{\Lambda_0, c_0})^T S_{\Lambda_0, c_0})^{-1} S_{\Lambda_0, c_0}^T A\big(\begin{matrix} \sigma\\ c\end{matrix}\big)-I\big\|_{({\mathcal J}_\beta(\Lambda_0))^{d+1}}\nonumber\\
& \le &   C  (1+\|\sigma\|_\infty)^{\beta}  (1+\|c\|_\infty) (\|c\|_\infty + \|\sigma\|_\infty),
\end{eqnarray}
where $I$ is the identity matrix of appropriate size.
For the function $f_{\Lambda_0, c_0, \Psi}$ in \eqref{flambda0.def}, we notice that
$$\nabla f_{\Lambda_0, c_0,\Psi} \big(\begin{matrix} \sigma\\ c\end{matrix}\big)= ( (S_{\Lambda_0, c_0})^T S_{\Lambda_0, c_0})^{-1} S_{\Lambda_0, c_0}^T A\big(\begin{matrix} \sigma\\ c\end{matrix}\big).$$
Then
 $\nabla f_{\Lambda_0, c_0, \Psi} \big(\begin{matrix} \sigma\\ c\end{matrix}\big)$ belongs to the Jaffard class $({\mathcal J}_\beta(\Lambda_0))^{d+1}$
  and
\begin{equation}\label{friidentification.tm.pf.eq14}
 \big\|\nabla f_{\Lambda_0, c_0, \Psi}\big(\begin{matrix} \sigma\\ c\end{matrix}\big)-I\big\|_{({\mathcal J}_\beta(\Lambda_0))^{d+1}}\le  C (1+\|\sigma\|)^\beta (1+\|c\|_\infty)
(\|c\|_\infty+\|\sigma\|_\infty)
\end{equation}
  by   \eqref{friidentification.tm.pf.eq11}.
Recall that any infinite matrix in the
 Jaffard class $({\mathcal J}_\beta(\Lambda_0))^{d+1}$ is a bounded operator on $(\ell^\infty(\Lambda))^{d+1}$.
Thus there exists a positive number $\delta_0$ such that
\begin{equation}\label{friidentification.tm.pf.eq15-}
 \big\|\nabla f_{\Lambda_0, c_0, \Psi} \big(\begin{matrix} \sigma\\ c\end{matrix}\big)-I\big\|_{{\mathcal B}((\ell^\infty(\Lambda_0))^{d+1})}\le \frac{1}{3}
\end{equation}
for vectors $\sigma\in (\ell^\infty(\Lambda_0))^d$ and $c\in \ell^\infty(\Lambda_0)$   with
 $\max(\|c\|_\infty, \| \sigma\|_\infty)\le \delta_0$.
This implies that  there exists a positive constant  $\delta_0$ such that
\begin{eqnarray} \label{friidentification.tm.pf.eq15}
\frac{2}{3} \max\big(\|\sigma_1-\sigma_2\|_\infty, \|c_1-c_2\|_\infty\big)  & \le &
\big\|f_{\Lambda_0, c_0, \Psi}\big(\begin{matrix} \sigma_1\\ c_1\end{matrix}\big)-f_{\Lambda_0, c_0, \Psi}
\big(\begin{matrix} \sigma_2\\ c_2\end{matrix}\big) \big\|_\infty\nonumber\\
& \le &  \frac{4}{3} \max\big(\|\sigma_1-\sigma_2\|_\infty, \|c_1-c_2\|_\infty\big) \end{eqnarray}
for vectors $\sigma_i\in (\ell^\infty(\Lambda_0))^d$ and $c_i\in \ell^\infty(\Lambda_0)$   with
 $\max(\|c_i\|_\infty, \| \sigma_i\|_\infty)\le \delta_0, i=1,2$.
 The first inequality in \eqref{friidentification.tm.eq5} then follows from
\eqref{friidentification.tm.pf.eq1+}, \eqref{friidentification.tm.pf.eq6} and
 \eqref{friidentification.tm.pf.eq15}.
\end{proof}

\begin{thm}\label{friidentification.vancittert.tm} Let $\varphi, \Psi, F, \Lambda_0, c_0,  \delta_0$ be as in  Theorem \ref{friidentification.tm}, and $f_{\Lambda_0, c_0, \Psi}$  as in \eqref{flambda0.def}.
 Given  the  sampling data   $y=\langle F(h), \Psi\rangle$
 of a signal $h=\sum_{\lambda\in \Lambda_0} (c_0(\lambda)+c(\lambda))    \varphi(\cdot-\lambda-\sigma(\lambda))$ with
$c=(c(\lambda))_{\lambda\in \Lambda_0}$ and $\sigma=(\sigma(\lambda))_{\lambda\in \Lambda_0}$ satisfying
$\max\big(\|c\|_\infty, \|\sigma\|_\infty)\le \delta_0/2$,
we define the Van-Cittert iteration by
\begin{equation}\label{friidentification.vancittert}
\Big(\begin{matrix} \sigma_{n+1}\\
d_{n+1}\end{matrix}\Big)=
\Big(\begin{matrix} \sigma_{n}\\
d_{n}\end{matrix}\Big)
 -\alpha \Big(f_{\Lambda_0, c_0, \Psi} \big(\begin{matrix} \sigma_n\\ d_n\end{matrix}\big)
-  z_0 \Big),\quad  n\ge 0,
\end{equation}
with  $\big(\begin{matrix} \sigma_{0}\\
d_{0}\end{matrix}\big)=0, \alpha\in (0,1)$, and $z_0=(S_{\Lambda_0, c_0})^T S_{\Lambda_0, c_0})^{-1} S_{\Lambda_0, c_0}^T (y-\langle F(h_0),\Psi\rangle)$. 
Then  $\big(\begin{matrix}\sigma_n \\ d_n\end{matrix}\big), n\ge 0$,  converges  to
the solution $\big(\begin{matrix}\sigma \\ c\end{matrix}\big)$   exponentially.
\end{thm}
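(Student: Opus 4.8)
The plan is to read the Van--Cittert iteration \eqref{friidentification.vancittert} as a Picard iteration $w_{n+1}=Tw_n$ for a map $T$ that is a contraction on a small $\ell^\infty$-ball, and then to push the standard fixed-point argument through while keeping the entire trajectory inside the region where the contraction estimate is valid. Write $g:=f_{\Lambda_0,c_0,\Psi}$, let $w_n:=\big(\begin{matrix}\sigma_n\\ d_n\end{matrix}\big)$ denote the $n$-th iterate, and let $w^*:=\big(\begin{matrix}\sigma\\ c\end{matrix}\big)$ be the true parameter vector of the signal $h$. First I would check that $w^*$ is a fixed point of
$$T\Big(\begin{matrix}\sigma\\ d\end{matrix}\Big):=\Big(\begin{matrix}\sigma\\ d\end{matrix}\Big)-\alpha\Big(g\Big(\begin{matrix}\sigma\\ d\end{matrix}\Big)-z_0\Big).$$
Since $y=\langle F(h),\Psi\rangle$, the definition of $z_0$ together with \eqref{flambda0.def} gives $z_0=(S_{\Lambda_0,c_0}^TS_{\Lambda_0,c_0})^{-1}S_{\Lambda_0,c_0}^T\langle F(h)-F(h_0),\Psi\rangle=g(w^*)$, whence $Tw^*=w^*$; moreover \eqref{friidentification.vancittert} is exactly $w_{n+1}=Tw_n$ with $w_0=0$.

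Next I would estimate the Lipschitz constant of $T$ on the ball ${\mathcal O}:=\{w:\max(\|\sigma\|_\infty,\|d\|_\infty)\le\delta_0\}$. Differentiating gives $\nabla T(w)=(1-\alpha)I-\alpha(\nabla g(w)-I)$, and the estimate \eqref{friidentification.tm.pf.eq15-} obtained in the proof of Theorem \ref{friidentification.tm} yields $\|\nabla g(w)-I\|_{{\mathcal B}((\ell^\infty(\Lambda_0))^{d+1})}\le 1/3$ for $w\in{\mathcal O}$. Hence, for $\alpha\in(0,1)$,
$$\|\nabla T(w)\|_{{\mathcal B}((\ell^\infty(\Lambda_0))^{d+1})}\le(1-\alpha)+\frac{\alpha}{3}=1-\frac{2\alpha}{3}=:\rho<1,\qquad w\in{\mathcal O}.$$
Integrating $\nabla T$ along the segment joining any two points of ${\mathcal O}$ then shows $\|Tw-Tw'\|_\infty\le\rho\|w-w'\|_\infty$ whenever that segment lies in ${\mathcal O}$.

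Finally I would run an induction guaranteeing that the iterates never leave ${\mathcal O}$. By hypothesis $\max(\|\sigma\|_\infty,\|c\|_\infty)\le\delta_0/2$, so $\|w_0-w^*\|_\infty=\|w^*\|_\infty\le\delta_0/2$. Assuming $\|w_n-w^*\|_\infty\le\delta_0/2$, every point $w^*+s(w_n-w^*)$, $s\in[0,1]$, satisfies $\|w^*+s(w_n-w^*)\|_\infty\le\|w^*\|_\infty+\|w_n-w^*\|_\infty\le\delta_0$, so the segment from $w^*$ to $w_n$ lies in ${\mathcal O}$ and the contraction estimate applies:
$$\|w_{n+1}-w^*\|_\infty=\|Tw_n-Tw^*\|_\infty\le\rho\|w_n-w^*\|_\infty\le\frac{\delta_0}{2},$$
closing the induction. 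Iterating yields $\|w_n-w^*\|_\infty\le\rho^n\|w_0-w^*\|_\infty$, i.e.\ exponential convergence of $\big(\begin{matrix}\sigma_n\\ d_n\end{matrix}\big)$ to $\big(\begin{matrix}\sigma\\ c\end{matrix}\big)$.

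The main obstacle is this last bookkeeping step. Unlike Theorem \ref{vancittertiteration.tm}, whose global hypotheses (strict monotonicity on all of $\ell^{p_0}$ and $f(0)=0$) fail here, the near-identity property of $g$---and hence the contraction---holds only on the ball ${\mathcal O}$; one must therefore ensure that both the trajectory $\{w_n\}$ and every segment used in the mean-value estimate remain inside ${\mathcal O}$. This is precisely why the admissible parameter size is halved from $\delta_0$ to $\delta_0/2$: the extra slack lets the contraction absorb the initial displacement $\|w^*\|_\infty$ without the iterates escaping the region where \eqref{friidentification.tm.pf.eq15-} is available.
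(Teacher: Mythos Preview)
Your proof is correct and follows essentially the same approach as the paper: both arguments derive the contraction factor $\rho=1-2\alpha/3$ from the gradient estimate \eqref{friidentification.tm.pf.eq15-} and check inductively that the iterates remain in the $\delta_0$-ball. The only cosmetic difference is that you track $\|w_n-w^*\|_\infty$ directly (after observing $z_0=g(w^*)$, hence $Tw^*=w^*$), whereas the paper tracks consecutive increments $\|w_{n+1}-w_n\|_\infty$ and sums the resulting geometric series to bound $\|w_n\|_\infty\le\delta_0$; your version is arguably cleaner.
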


\begin{proof}
By \eqref{friidentification.tm.pf.eq15-}, \eqref{friidentification.tm.pf.eq15} and
\eqref{friidentification.vancittert}, we have
\begin{equation*}
\max (\|\sigma_1\|_\infty, \|d_1\|_\infty)\le \frac{4\alpha}{3} \max (\|\sigma\|_\infty, \|c\|_\infty)
\end{equation*}
and
\begin{equation*}
\max (\|\sigma_{n+1}-\sigma_n\|_\infty, \|d_{n+1}-d_n\|_\infty)\le \frac{3-2\alpha}{3}
 \max (\|\sigma_{n}-\sigma_{n-1}\|_\infty, \|d_n-d_{n-1}\|_\infty)
\end{equation*}
for all $n\ge 1$. Hence
$\max(\|\sigma_n\|_\infty, \|d_n\|_\infty)\le \delta_0$ and
the sequence $\big(\begin{matrix}\sigma_n \\ d_n\end{matrix}\big), n\ge 0$, in the
Van-Cittert iteration \eqref{friidentification.vancittert} converges  to
the solution $\big(\begin{matrix}\sigma \\ c\end{matrix}\big)$   exponentially.
\end{proof}

\subsection{Sampling in a perturbed shift-invariant space with unknown perturbation}

\begin{thm} \label{blindsampling.tm}
Let $\varphi$  satisfy \eqref{friidentification.tm.eq2} and $\psi_1, \ldots, \psi_M$ satisfy
\begin{equation}\label{blindsampling.tm.eq1}
 \sum_{1\le m\le M} \|\psi_m(\cdot) (1+|\cdot|)^\beta\|_\infty
<\infty.
\end{equation}
If \eqref{blindcondition} holds,
then  for any $L\ge 1$ there exists  a positive number $\delta_1\in (0,1/2)$ such that
any signal $h(t)=\sum_{k\in \Zd} c(k) \varphi(t-k-\sigma(k))$
with  $\|(\sigma(k))_{k\in \Zd}\|_\infty\le \delta_1$ and $\|(c(k))_{k\in \Zd}\|_{\ell^\infty_\oslash}:=\sup_{c(k)\ne 0} |c(k)|+|c(k)|^{-1}
\le L$
 can be recovered
 from its average sample data
 $\langle  h , \psi_m(\cdot-k)\rangle, 1\le m\le M,  k\in \Zd$, in a stable way.
\end{thm}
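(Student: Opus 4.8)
The plan is to recognize Theorem \ref{blindsampling.tm} as a concrete instance of the local identification result, Theorem \ref{friidentification.tm}, specialized to the uniform grid, the shift-generated sampler $\Psi=(\psi_m(\cdot-k))_{1\le m\le M,\,k\in\Zd}$, and the trivial companding $F(t)=t$ (so that $F'\equiv 1,\ F''\equiv 0$). Under this identification the index set $\Gamma=\{1,\dots,M\}\times\Zd$ is relatively-separated, $\Psi$ has polynomial decay of order $\beta$ by \eqref{blindsampling.tm.eq1}, and the hypotheses \eqref{friidentification.tm.eq2} and \eqref{friidentification.tm.eq3} on $\varphi$ and $F$ hold. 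Thus the only hypothesis of Theorem \ref{friidentification.tm} left to verify is the stability \eqref{slambda0.stability} of the linearization $S_{\Lambda_0,c_0}$ in \eqref{slambda0.def}, and the entire difficulty is to extract it from the rank condition \eqref{blindcondition}, \emph{uniformly} over all admissible amplitude vectors $c_0$ with $\|c_0\|_{\ell^\infty_\oslash}\le L$. Here I would take $\Lambda_0=\{k\in\Zd:\ c_0(k)\neq 0\}$ to be the active support, so that $S_{\Lambda_0,c_0}$ acts on $(\ell^2(\Lambda_0))^{d+1}$.

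The key step is a Fourier computation exploiting the shift-invariant structure. Writing $e=(e_1,e_0)$ for the position and amplitude directions and extending $e\in(\ell^2(\Lambda_0))^{d+1}$ by zero to $\Zd$, the inner products $\langle\varphi(\cdot-n),\psi_m\rangle$ and $\langle\nabla\varphi(\cdot-n),\psi_m\rangle$ are convolution kernels, and a Poisson-summation identity gives the Fourier symbols $\sum_{n\in\Zd}\langle\varphi(\cdot-n),\psi_m\rangle\,e^{in\xi}=[\hat\varphi,\widehat\psi_m](\xi)$ and $\sum_{n\in\Zd}\langle\nabla\varphi(\cdot-n),\psi_m\rangle\,e^{in\xi}=[\widehat{\nabla\varphi},\widehat\psi_m](\xi)$. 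Hence the $(d+1)\times M$ symbol matrix of the operators underlying $S_{\Lambda_0,c_0}$ is exactly the matrix $T(\xi)$ appearing in \eqref{blindcondition}. After the substitution $\tilde e_1(k)=c_0(k)e_1(k)$, which makes $S_{\Lambda_0,c_0}$ genuinely shift-invariant in the pair $(\tilde e_1,e_0)$, Parseval's identity yields
\[
\|S_{\Lambda_0,c_0}e\|_2^2=\frac{1}{(2\pi)^d}\int_{[-\pi,\pi]^d}\hat v(\xi)^{*}\,G(\xi)\,\hat v(\xi)\,d\xi,\qquad G(\xi)=T(\xi)T(\xi)^{*},\ \ \hat v=(\widehat{\tilde e_1},\widehat{e_0}).
\]
The rank condition \eqref{blindcondition} says $T(\xi)$ has full row rank $d+1$ for every $\xi$, so $G(\xi)$ is positive definite; continuity of the bracket products (from the decay of $\varphi,\nabla\varphi,\psi_m$) together with compactness of the torus $[-\pi,\pi]^d$ upgrades this to a uniform bound $G(\xi)\ge cI$ with $c>0$. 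Therefore $\|S_{\Lambda_0,c_0}e\|_2^2\ge c\,(\|\tilde e_1\|_2^2+\|e_0\|_2^2)$.

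To finish I would transfer the bound from $\tilde e_1$ back to $e_1$. The constraint $\|c_0\|_{\ell^\infty_\oslash}\le L$ forces $|c_0(k)|\ge 1/L$ at every active node (if $t+t^{-1}\le L$ then the two admissible values of $t$ are reciprocal and the larger is $\le L$, so the smaller is $\ge 1/L$), while $e_1$ is supported on $\Lambda_0$; hence $\|\tilde e_1\|_2\ge L^{-1}\|e_1\|_2$ and the lower bound in \eqref{slambda0.stability} holds with $A=\sqrt c\,/L$. The matching upper bound $B$ follows from the boundedness of the bracket products and $|c_0(k)|\le L$. Crucially $A,B$ depend only on $L,\varphi$ and the $\psi_m$, not on the particular $c_0$, so \eqref{slambda0.stability} holds uniformly over the admissible class. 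Feeding this uniform stability into Theorem \ref{friidentification.tm} produces a single threshold $\delta_0$; choosing $\delta_1\in(0,\min(1/2,\delta_0))$ then places every signal with $\|(\sigma(k))_{k}\|_\infty\le\delta_1$ and $\|(c(k))_{k}\|_{\ell^\infty_\oslash}\le L$ in the stable-recovery regime, which is the assertion.

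The step I expect to be the main obstacle is the degeneracy at inactive nodes. When $c_0(k)=0$ the column of $S_{\Lambda_0,c_0}$ carrying the position perturbation $\sigma(k)$ vanishes, so the symbol bound controls only $\tilde e_1=c_0e_1$ and loses all information about $e_1$ on the zero set of $c_0$. This is precisely why the Strang--Fix condition is insufficient---a constant-amplitude signal $\sum_k\varphi(\cdot-k-\sigma_0)$ cannot pin down $\sigma_0$---and it is the reason the theorem is phrased with the $\ell^\infty_\oslash$ norm rather than the ordinary $\ell^\infty$ norm: restricting to the active support, where $|c_0(k)|$ is bounded away from zero, is exactly what neutralizes this degeneracy and lets the rank condition \eqref{blindcondition} deliver a uniform lower frame bound.
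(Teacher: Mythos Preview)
Your Fourier/Parseval verification of the stability bound \eqref{slambda0.stability}, together with the uniform control of the constants $A,B$ over all $c_0$ with $\|c_0\|_{\ell^\infty_\oslash}\le L$, is correct and is essentially how the paper handles the final step of its proof. But there is a genuine gap earlier: you apply Theorem~\ref{friidentification.tm} with $(\Lambda_0,c_0)$ equal to the \emph{true} support and amplitude vector of $h$, which are precisely the unknowns to be recovered. Theorem~\ref{friidentification.tm} only yields a bi-Lipschitz bound on the $\delta_0$-ball around a \emph{given} reference $(\Lambda_0,c_0)$; two admissible signals whose amplitude vectors differ by more than $\delta_0$ (say $c^{(1)}(k)=1$ and $c^{(2)}(k)=-1$, both with $\|\cdot\|_{\ell^\infty_\oslash}\le 2$) lie in no common ball, so your uniform \emph{local} estimate does not upgrade to global injectivity of the sampling map on the full admissible class, and the recovery procedure implicit in Theorem~\ref{friidentification.tm} cannot be started without already knowing $(\Lambda_0,c_0)$.

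The paper closes exactly this gap by first constructing a reference point \emph{from the data}. It uses only the bottom row of the rank condition \eqref{blindcondition} to see that $R(\xi)=\sum_m|[\hat\varphi,\widehat\psi_m](\xi)|^2>0$, inverts it in the Jaffard class via Wiener's lemma, and obtains reconstruction sequences $(r_m(k))$ with $\sum_k|r_m(k)|<\infty$. The linear estimate $\tilde c(k)=\sum_m\sum_{k'}r_m(k-k')\langle h,\psi_m(\cdot-k')\rangle$ then satisfies $\|\tilde c-c\|_\infty\le C_0\|c\|_\infty\|\sigma\|_\infty$; thresholding $\tilde c$ at level $1/(2L)$ yields a vector $\tilde c_0$ which, for $\delta_1$ small enough, has \emph{exactly} the correct support $\Lambda_0$ and satisfies $\|\tilde c_0-c\|_\infty\le C_0L\|\sigma\|_\infty$. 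Only then does the paper verify \eqref{slambda0.stability} for $S_{\Lambda_0,\tilde c_0}$ (your Parseval argument, applied at this data-derived $\tilde c_0$) and invoke Theorem~\ref{friidentification.tm}. Because $(\Lambda_0,\tilde c_0)$ is determined by the samples alone, any two signals with identical data share the same reference ball, and the local result becomes a genuine recovery procedure. This data-driven identification of $(\Lambda_0,c_0)$ is the missing ingredient in your argument.
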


\begin{rem}\label{blindsampling.averagesamplingremark}
{\rm Let $\varphi$   and $\psi_1, \ldots, \psi_M$  be as in Theorem \ref{blindsampling.tm},  and let
\begin{equation*}
V_2(\varphi, \nabla\varphi)=\Big\{  \sum_{k\in \Zd} \big( e_o(k) \varphi(\cdot-k)+ (e_g(k))^T\nabla\varphi (\cdot-k)\big) \Big|
\sum_{k\in \Zd} |e_0(k)|^2+|e_g(k)|^2<\infty\Big\}
\end{equation*}
be the shift-invariant space generated by $\varphi$ and its first-order partial derivatives.
Then an equivalent formulation of the condition
\eqref{blindcondition}  is that
the integer-shifts of $\varphi$ and its first-order partial derivatives
 form a Riesz basis for the  space  $V_2(\varphi, \nabla \varphi)$ and
 signals $h$ living in  $V_2(\varphi, \nabla \varphi)$
can be recovered stably from their samples $\langle h, \psi_m(\cdot-k)\rangle, 1\le m\le M, k\in \Zd$.
%
The readers may refer to \cite{akramgrochenigsiam, akramst05, sunaicm10, unsersurvey} 
and references therein for sampling and reconstruction of signals in  a shift-invariant space.
}\end{rem}

\begin{proof}[Proof of Theorem \ref{blindsampling.tm}]
By \eqref{blindcondition}, the vector  $A(\xi):=\big([\widehat {\varphi}, \widehat \psi_1](\xi), \cdots, ([\widehat {\varphi}, \widehat \psi_M](\xi)\big)$
is  nonzero for all $\xi\in \Rd$.  Here the bracket product is defined as follows: $[f, g](\xi)=\sum_{l\in \Zd}
f(\xi+2l\pi) \overline{g(\xi+2l\pi)}$.
Hence \begin{equation}\label{blindsampling.tm.pf.eq1}
R(\xi):=\sum_{m=1}^M |[\widehat {\varphi}, \widehat \psi_m](\xi)|^2>0\quad  {\rm  for\  all} \ \xi\in \Rd.\end{equation}
By \eqref{friidentification.tm.eq2} and  \eqref{blindsampling.tm.eq1}, the Fourier coefficients of
$[\widehat {\varphi}, \widehat \psi_m](\xi), 1\le m\le M$,
have  polynomial decay of order $\beta>d$. Here  a periodic function  $f(\xi)=\sum_{k\in \Zd} c_f(k) e^{- i k\xi}$ is said to have Fourier coefficients with polynomial decay of order $\beta\ge 0$ if
$\sup_{k\in \Zd} | c_f(k) (1+|k|)^\beta<\infty$. This, together with
\eqref{blindsampling.tm.pf.eq1} and Wiener's lemma for infinite matrices in the Jaffard class \cite{jaffard90, suncasp05}, implies that
$(R(\xi))^{-1}$  has Fourier coefficients  with polynomial decay of order $\beta>d$.
So does
\begin{equation}\label{blindsampling.tm.pf.eq1+}(R(\xi))^{-1} A(\xi):=\Big(\sum_{k\in \Zd} r_1(k)e^{-ik\xi}, \cdots, \sum_{k\in \Zd} r_M(k)e^{-ik\xi}\Big).\end{equation}
Thus
\begin{equation}\label{blindsampling.tm.pf.eq2}
D_2:=\sup_{1\le m\le M} \sum_{k\in \Zd} |r_m(k)|<\infty.
\end{equation}

Given the sampling data
 $y_m(k):=\langle h, \psi_m(\cdot-k)\rangle, {1\le m\le M, k\in \Zd}$, of a signal  $h(t)=
\sum_{k\in \Zd} c(k) \varphi(\cdot-k-\sigma(k))$   satisfying
 $\|(c(k))_{k\in \Zd}\|_{\ell^\infty_\oslash}\le L$ and $\|(\sigma(k))_{k\in \Zd}\|_\infty\le \delta_1$.
Define  $\tilde c:=(\tilde c(k))_{k\in \Zd}$ by
$\tilde c(k)=\sum_{m=1}^M\sum_{k'\in\Zd} r_{m}(k-k') y_m(k') ,  k\in\Zd$.
From the construction of vectors $(r_{m}(k))_{k\in \Zd}$, we see that
$c(k)=\sum_{m=1}^M \sum_{k'\in \Zd}r_{m}(k-k') \langle \tilde h, \psi_m(\cdot-k')\rangle$ for all $k\in \Zd$, where
 $\tilde h=
\sum_{k\in \Zd} c(k) \varphi(\cdot-k)$. This together with
\eqref{blindsampling.tm.eq1}  gives that
$$ \|\tilde c-c\|_\infty 
 \le   D_2 \Big( \sum_{m=1}^M  \|\psi_m\|_1\Big)
\|c\|_\infty \Big\|  \sum_{k\in \Zd}   |\varphi(\cdot-k-\sigma(k))-\varphi(\cdot-k)|\Big\|_\infty\nonumber\\
  \le  C_0\|c\|_\infty \|\sigma\|_\infty $$
for some positive constant $C_0$. Therefore, if  $\delta_1\le 1/(3 C_0L^2)$, the vector
 $\tilde c_0:=(\tilde c_0(k))_{k\in \Zd}$ defined  by
\begin{equation}\label{blindsampling.co.def}
\tilde c_0(k)=\left\{\begin{array}{ll}  \tilde c(k) & {\rm if}\  |\tilde c(k)| \ge 1/(2L)\\
0 & {\rm otherwise}\end{array}\right.\end{equation}
has the same support, say $\Lambda_0$, as the one of the amplitude vector $c$ of the signal $h$ to be recovered,   and
satisfies
\begin{equation}\label{blindsampling.tm.pf.eq4} \|\tilde c_0-c\|_\infty\le C_0L \|\sigma\|_\infty.\end{equation}
So we may consider
$\sum_{k\in \Lambda_0} \tilde c_0(k) \varphi(\cdot-k)$  as an approximation of the signal  $h=\sum_{k\in \Lambda_0}
c(k) \varphi(\cdot-k-\sigma(k))=\sum_{k\in \Zd} c(k) \varphi(\cdot-k-\sigma(k)) $ to be recovered.

Let $c_0=(\tilde c_0(\lambda))_{\lambda\in \Lambda_0}$,  $\Psi=\{\psi_m(\cdot-k)\}_{1\le m\le M, k\in \Zd}$,
and $S_{\Lambda_0, c_0}$ be the
 the linearization matrix  in \eqref{slambda0.def}.
Then
for any $e=(e(\lambda))_{\lambda\in \Lambda_0}\in (\ell^2(\Lambda_0))^{d+1}$,
\begin{eqnarray*} \|S_{\Lambda_0, c_0} e\|_2^2 & = & \sum_{m=1}^M \sum_{k\in\Zd}
\Big|\big\langle \psi_m(\cdot-k), \sum_{\lambda\in \Lambda_0} \big(-\tilde c_0(\lambda) e_g(\lambda)^T \nabla \varphi(\cdot-\lambda)+ e_o(\lambda)\varphi(\cdot-\lambda)\big)\big\rangle\Big|^2\nonumber\\
   & \ge &    C\Big( \sum_{\lambda\in \Lambda_0} |\tilde c_0(\lambda)|^2 |e_g(\lambda)|^2 + |e_o(\lambda)|^2\Big) \ge  C L^{-2} \|e\|_{(\ell^2(\Lambda_0))^{d+1}}^2
\end{eqnarray*}
and
\begin{eqnarray*} \|S_{\Lambda_0, c_0} e\|_2^2  & \le  &
C \Big\|\sum_{\lambda\in \Lambda_0} \big(-\tilde c_0(\lambda) e_g(\lambda)^T \nabla \varphi(\cdot-\lambda)+ e_o(\lambda)\varphi(\cdot-\lambda)\big)\Big\|_2^2\nonumber\\
& \le &  C \|e\|_{(\ell^2(\Lambda_0))^{d+1}}^2  (1+\|\tilde c_0\|_{\ell^\infty(\Lambda_0)})^2
\le C L^2 \|e\|_{(\ell^2(\Lambda_0))^{d+1}}^2,
\end{eqnarray*}
by \eqref{blindcondition}, \eqref{friidentification.tm.eq2}, \eqref{blindsampling.tm.eq1} and \eqref{blindsampling.tm.pf.eq4}, where $e(\lambda)=\big(\begin{matrix} e_g(\lambda)\\ e_o(\lambda)\end{matrix}\big), \lambda\in \Lambda_0$.
The above estimates lead to the conclusion that the linearization matrix $S_{\Lambda_0, c_0}$ satisfies the stability condition \eqref{slambda0.stability}.
This together with \eqref{blindsampling.tm.pf.eq4} and Theorem  \ref{friidentification.tm} completes the proof.
\end{proof}

\subsection{Numerical simulation}
In the following simulation,  the signal $h$ to be recovered has the  following parametric representation
\begin{equation}\label{fri.signal.example}
h(t)=\sum_{i=1}^{20} a_i \varphi_0(t-t_i),\end{equation}
 where  $\varphi_0(t)=\exp(-4(\pi/2)^{2/3}t^2)$,
%
the innovation positions $t_i, 1\le i\le 20$,
are randomly selected numbers in $[0.5, 19.5]$ satisfying $t_1=0.5, t_{20}=19.5$ and  $0.5\le \min_{1\le i\le 19} t_{i+1}-t_i\le
\max_{1\le i\le 19} t_{i+1}-t_i
\le 1.5$, and the amplitudes
$a_i, 1\le i\le 20$,  are random selected numbers in $[-1,-0.1]\cap [0.1, 1]$.
We consider the recovery of  such a signal $h$ as it can be used to model diffusion of  point sources of pollution
and  the generating signal $\varphi_0$ does not satisfy the Strang-Fix condition. 

\begin{figure}[hbt]
\centering
\begin{tabular} {c}
   \includegraphics[width=90mm]{./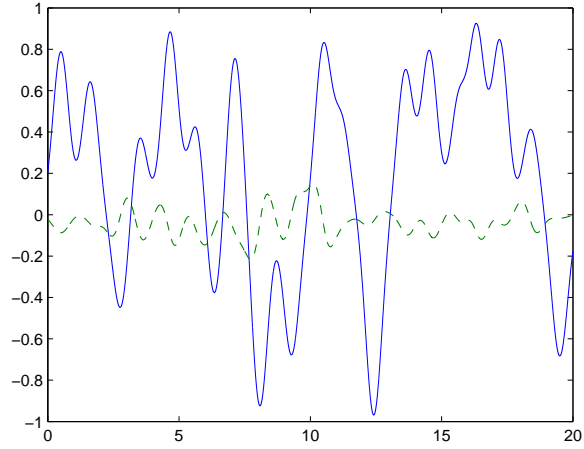} 
   \end{tabular}
\caption{\small The original signal $h(t)=\sum_{i=1}^{20} a_i \varphi_0(t-t_i)$ to be recovered is plotted in a continuous line, while
the difference between the original signal $h(t)$ and the approximate signal
$h_{\rm app}(t)=\sum_{i=1}^{20} a_{0, i} \varphi_0(t-t_{0,i})$
is plotted in a dashed line.}
\label{frisignal.fig}
\end{figure}
The given samples
$y_j, 1\le j\le 41$, of the signal $h$ are obtained by average sampling the companded  signal $\sin (\pi h(t)/2)$
by  the shifted square-root sampler $\psi_{0}$,
\begin{equation} \label{noisedata.table2} y_j=\langle \sin(\pi h(\cdot)/2)\chi_{[0, 20]}, \psi_{0}(\cdot+1-j/2)\rangle, \quad 1\le j\le 41,\end{equation}
where 
\begin{equation}\label{squareroorsampler.def}
\psi_0(t)=\left\{\begin{array}{ll} \max(1/4-|2t-1/4|, 0) & {\rm if} \ 0\le t\le 1/4\\
4t-1 & {\rm if} \ 1/4\le t\le 1/2\\
1  &  {\rm if} \ 1/2\le t\le 1\\
0 & {\rm otherwise}\end{array}\right.
\end{equation}
is the square-root sampling kernel, see Figure \ref{squrerootsampler.fig}.
The
 approximate innovation positions  $t_{0,i}$  and amplitudes  $c_{0, i}, 1\le i\le 20$,
 are given by
$$a_{0,i}=\lfloor 10 a_i\rfloor /10\quad {\rm  and} \quad t_{0,i}=\lfloor 10 t_i\rfloor /10, \ 1\le i\le 20,$$
where $\lfloor a\rfloor $ is the largest integer less than or equal to $a\in \RR$.
\begin{figure}[hbt]
\centering
\begin{tabular}{c}
         \includegraphics[width=90mm]{./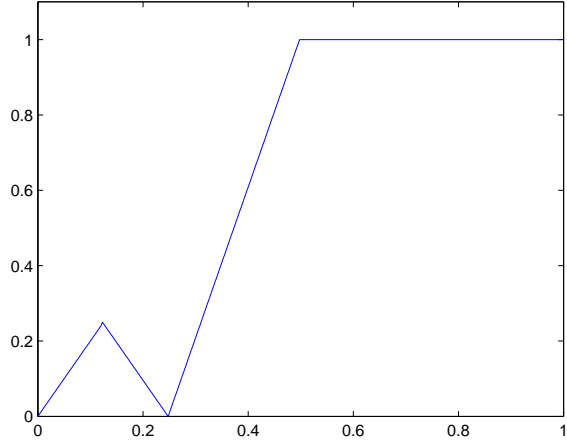}
   \end{tabular}
\caption{\small The square-root sample $\psi_0$ in \eqref{squareroorsampler.def}.}
\label{squrerootsampler.fig}
\end{figure}
We use  the  Van-Cittert iteration  \eqref{friidentification.vancittert} in Theorem \ref{friidentification.vancittert.tm} to recover the exact innovation positions and amplitudes
with $c_0=\sigma_0=0$ as initials  and  $\alpha=0.5$ as the relaxation factor.
  In Tables \ref{identification.tab} below, we list  the amplitude errors $\|c_n-c_\infty\|_\infty$
in the second column,
 the innovation position errors $\|\sigma_n-\sigma_\infty\|_\infty$  in the third column, and
the sampling errors  $\|\langle F(h_n)-F(h), \Psi\rangle\|_\infty$ in the forth column,
 where $F(t)=\sin (\pi t/2)$ and $\Psi=(\psi_0(\cdot+1-j/2)\chi_{[0,20]})_{1\le j\le 41}$.
 \begin{table}[h]
\caption{Local identification of innovation  positions and qualification of amplitudes via the Van-Cittert iteration method}
\begin{tabular}{ccccc}
\hline\hline
 n 
 &  $\|c_n-c_\infty\|_\infty$ 
& $\|\sigma_n-\sigma_\infty\|_\infty$ 
 &  $\|\langle F(h_n)-F(h), \Psi\rangle\|_\infty$ 
 \\
\hline
1 &   0.1465   &   0.4085 &     0.2805\\
5 &     0.1459 &     0.0292   &    0.1590\\
10 &     0.0088 &     0.0045 &     0.0140\\
15 &     0.0005  &    0.0004  &    0.0007\\
$\ge 20$ &    0.0000   &   0.0000    &  0.0000
\\
\hline \hline
\end{tabular}
\label{identification.tab}
\end{table}
We observe that the sequences $\sigma_n$ and $c_n, n\ge 0$,
in the Van-Cittert iteration  \eqref{friidentification.vancittert} 
converges and  their limits,  denoted by $\sigma_\infty$ and $c_\infty$ respectively,
satisfy
$\sigma_\infty=(t_i-t_{0,i})_{1\le i\le 20}$ and
$c_\infty=(a_i-a_{0,i})_{1\le i\le 20}$. Hence we reach to the desired conclusion that  the signal $h$
in \eqref{fri.signal.example}
 can be recovered {\bf accurately}  from its samples via
the Van-Cittert iteration \eqref{friidentification.vancittert} when its innovation positions and amplitudes  are given approximately.
Moreover, the above recovery process is stable in the presence of bounded noise in Theorem \ref{errorestimate.tm}.
In  Table \ref{identification.tab2} below, we list  numerical results when we apply the Van-Citert iteration \eqref{friidentification.vancittert}
with  exact samples $y_j, 1\le j\le 41$, in \eqref{noisedata.table2} replaced by the  samples $\tilde y_j, 1\le j\le 41$, corrupted by $5\%$ random additive noise,
i.e., $$\tilde y_j= y_j+0.05*\epsilon_j*(\max_{1\le k\le 41}|y_k|), \ 1\le j\le 41,$$
where $\epsilon_j\in [-1, 1], 1\le j\le 41$, are random noise.

  \begin{table}[h]
\caption{Local identification of innovation  positions and qualification of amplitudes via the Van-Cittert iteration method in  the presence of
$5\%$ bounded random noise}
\begin{tabular}{ccccc}
\hline\hline
 n 
 &  $\|c_n-c_\infty\|_\infty$ 
& $\|\sigma_n-\sigma_\infty\|_\infty$ 
 &  $\|\langle F(h_n)-F(h), \Psi\rangle\|_\infty$ 
 \\
\hline
1 & 0.1725 &   0.4593 &   0.2805\\
5&    0.3059  &  0.0818 &    0.3769\\
10 &    0.1110  &   0.0716 &    0.0820\\
15&    0.1110 &   0.0847 &    0.0648\\
 $\ge$ 19&   0.1110  &   0.0853 &    0.0648\\
\hline \hline
\end{tabular}
\label{identification.tab2}
\end{table}

{\bf Acknowledgement} \  The author would like to thank  anonymous reviewers for their valuable comments for the improvement of the manuscript.

\begin{thebibliography}{999}

\bibitem{akramjfa09}
A. Aldroubi, A. Baskakov and I. Krishtal, Slanted matrices, Banach frames,
and sampling, {\em J. Funct. Anal.}, {\bf  255}(2008),
1667--1691.

\bibitem{akramgrochenigsiam}  A. Aldroubi and K. Gr\"ochenig,
Nonuniform sampling and reconstruction in shift-invariant space,
{\em SIAM Review}, {\bf 43}(2001), 585--620.

\bibitem{akramst05}  A. Aldroubi, Q. Sun, and W.-S. Tang,
Convolution, average sampling and a Calderon resolution of the
identity for shift-invariant spaces, {\em J. Fourier Anal. Appl.},
{\bf 11}(2005),  215--244.

\bibitem{balanchl04}
R. Balan, P. G. Casazza, C. Heil, and Z. Landau, Density,
overcompleteness and localization of frames I. Theory;  II. Gabor
system, {\em J. Fourier Anal. Appl.},  {\bf  12}(2006), pp.
105--143 and pp. 309--344.

\bibitem{balan} R. Balan,  The  noncommutative Wiener lemma,
linear independence, and special properties of the algebra of
time-frequency shift operators,  {\em Trans. Amer. Math. Soc.},
{\bf 360}(2008), 3921-3941.

\bibitem{baskakov90} A. G. Baskakov,  Wiener's theorem and
asymptotic estimates for elements of inverse matrices, {\em
Funktsional. Anal. i Prilozhen},  {\bf 24}(1990),  64--65;
translation in {\em Funct. Anal. Appl.},  {\bf 24}(1990),
222--224.

 \bibitem{bns09} N. Bi, M. Z. Nashed and Q. Sun, Reconstructing signals with finite rate of innovation from noisy samples, {\em  Acta Appl. Math.}, {\bf 107}(2009), 339--372.

\bibitem{blackadarcuntz91} B. Blackadar and J. Cuntz. Differential Banach algebra norms and smooth subalgebras of $C^*$-algebras,
 {\em J. Operator Theory}, {\bf 26}(1991),  255--282.

\bibitem{browder67} F. E. Browder, Nonlinear mappings of nonexpansive and accretive type in Banach spaces, {\em
Bull. Amer. Math. Soc.}, {\bf 73}(1967), 875--882.

 \bibitem{christensen05}   O. Christensen and T. Strohmer, The finite section method and problems in frame theory, {\em  J. Approx.
Theory},  {\bf  133}(2005), 221--237.

\bibitem{coombes05} K. R. Coombes, J. M. Koomen, K. A. Baggerly, J. S.
Morris, and R. Kobayashi, Understanding the characteristics of
mass spectrometry data through the use of simulation, {\em Cancer
Informatics},  {\bf 1}(2005), 41--52.

\bibitem{win02} R. J.-M. Cramer, R. A. Scholtz, and M. Z. Win,
Evaluation of an ultra wide-band propagation channel, {\em IEEE
Trans. Antennas and Propagation}, {\bf 50}(2002),  561--569.

\bibitem{dahlkejat10}
S. Dahlke, M. Fornasier,  and K. Gr\"ochenig, Optimal adaptive computations in the Jaffard algebra and localized frames, {\em J. Approx. Th.}, {\bf 162}(2010), 153--185.

\bibitem{devore93} R. A. DeVore and G. G. Lorentz, {\em  Constructive Approximation},  Springer-Verlag,
Berlin, 1993.

    \bibitem{donoho06} D. Donoho, Compressive sampling, {\em IEEE Trans. Inform. Theory}, {\bf 52}(2006), 1289--1306.

\bibitem{dvb07}
P. L. Dragotti, M. Vetterli and T. Blu,  Sampling moments and reconstructing signals of finite rate of innovation: Shannon meets Strang-Fix, {\em IEEE Trans.  Signal Processing}, {\bf 55}(2007),  1741--1757.

\bibitem{eldar08} T. G. Dvorkind, Y. C. Eldar and E. Matusiak, Nonlinear and nonideal sampling: Theory and Methods, {\em IEEE  Trans. Signal Process.}, {\bf 56}(2008), 5874--5890.

  \bibitem{Faris01} H. Farid, Blind inverse gamma correction, {\em IEEE Trans. Image Process.}, {\bf 10}(2001), 1428--1433.

\bibitem{Faktor10} T. Faktor, T. Michaeli, and Y. C. Eldar, Nonlinear and nonideal sampling revisited,
{\em IEEE Signal Processing Letter}, {\bf 17}(2010), 205--208.

\bibitem{gkwieot89} I. Gohberg, M. A. Kaashoek, and H. J.
Woerdeman, The band method for positive and strictly contractive
extension problems: an alternative version and new applications,
{\em Integral Equ.  Oper. Theory}, {\bf 12}(1989),
343--382.

\bibitem{grochenigsurvey} K. Gr\"ochenig,  Wiener's lemma: theme and variations, an introduction to spectral invariance and its applications,
    In {\em  Four Short Courses on Harmonic Analysis:
Wavelets, Frames, Time-Frequency Methods, and Applications to Signal and Image Analysis},  edited by P. Massopust and B. Forster,
Birkh\"auser, Boston, 2010.

\bibitem{grochenig06} K. Gr\"ochenig,  Time-frequency analysis of Sj\"ostrand's class, {\em Rev. Mat. Iberoam.}, {\bf 22}(2006), 703--724.

\bibitem{grochenigklotz10} K. Gr\"ochenig and A. Klotz,
Noncommutative approximation: inverse-closed subalgebras and off-diagonal decay of matrices, {\em Constr. Approx.}, {\bf 32}(2010),
 429--466.

\bibitem{grochenigklotz12} K. Gr\"ochenig and A. Klotz,
Norm-controlled inversion in smooth Banach algebras I, {\em J. London Math. Soc.},  to appear.

\bibitem{grochenigl03} K. Gr\"ochenig and  M. Leinert,
Wiener's lemma for twisted convolution and Gabor frames, {\em J.
Amer. Math. Soc.}, {\bf 17}(2003), 1--18.

\bibitem{gltams06} K. Gr\"ochenig and M. Leinert, Symmetry of matrix
algebras and symbolic calculus for infinite matrices, {\em Trans,
Amer. Math. Soc.},  {\bf 358}(2006),  2695--2711.

\bibitem{graif08} K. Gr\"ochenig and Z. Rzeszotnik, Almost diagonlization of pseudodifferntial operators, {\em Ann. Inst. Fourier}, {\bf 58}(2008), 2279--2314.

    \bibitem{grochenigr10} K. Gr\"ochenig, Z. Rzeszotnik, and T.
Strohmer,  Convergence analysis of the finite section method and
Banach algebras of matrices, {\em  Integral Equ. Oper. Theory}, {\bf 67}(2010), 183–-202.

\bibitem{grochenigs07} K. Gr\"ochenig, and T. Strohmer,  Pseudodifferential operators on locally compact Abelian groups and
Sj\"ostrand's symbol class, {\em J. Reine Angew. Math.}, {\bf  613}(2007), 121--146.

\bibitem{halljin10} P. Hall and J. Jin, Innovated higher criticism for detecting sparse signals in correlated noise
{\em  Ann. Statist.},  {\bf  38}(2010), 1686--1732.

\bibitem{Holst96} G. C. Holst, {\em CCD Arrays Camera and Displays}, Bellingham, WA, SPIE Press, 1996.

    \bibitem{jaffard90} S. Jaffard, Properi\'et\'es des matrices bien
localis\'ees pr\'es de leur diagonale et quelques applications,
{\em Ann. Inst. Henri Poincar\'e}, {\bf 7}(1990), 461--476.

\bibitem{kato67} T. Kato,  Nonlinear semigroups and evolution equations, {\em J. Math. Soc. Japan}, {\bf 19}(1967), 508--520.

\bibitem{Kawai95} S. Kawai, M. Marimoto, N. Mutoh, and N. Teranishi, Photo response analysis in CCD image  sensors with a VOD structure, {\em
IEEE Trans. Electron Devices}, {\bf 42}(1995), 652--655.

\bibitem{kissin94} E. Kissin and V. S. Shulman. Differential properties of some dense subalgebras of $C^*$-algebras, {\em  Proc.
Edinburgh Math. Soc.} {\bf 37}(1994),  399--422.

\bibitem{Kose90} K. Kose, K. Endoh, and T. Inouye, Nonlinear amplitude compression in magnetic resonance imaging: Quantization noise reduction and data memory saving, {\em IEEE Aerosp. Electron. Syst,. Mag.}, {\bf 5}(1990), 27--30.

\bibitem{Krishtal11} I. Krishtal, Wiener's lemma: pictures at exhibition,
{\em Revista Union Matematica Argentina}, {\bf 52}(2011), 61--79.

\bibitem{krishtal08} I.    Krishtal, and K. A. Okoudjou,  Invertibility of the Gabor frame operator on the Wiener amalgam space,
{\em J. Approx. Theory}, {\bf  153}(2008), 212--224.

\bibitem{kusuma03} J. Kusuma, I. Maravic and M. Vetterli,
Sampling with finite rate of innovation: channel and timing
estimatation for UWB and GPS, \emph{IEEE Conference on
Communication 2003}, Achorage, AK.

\bibitem{landau61}
H. J. Landau and W. L. Miranker,  The recovery of distorted band-limited signals, {\em J. Math. Anal. Appl.}, {\bf 2}(1961), 97--104.

\bibitem{dolu08} Y. M. Lu and M. N. Do, A theory for sampling signals from a union of subspaces, {\em  IEEE Transactions on  Signal Processing}, {\bf 56}(2008), 2334--2345.

\bibitem{mv05} I. Maravic and M. Vetterli,
Sampling and reconstruction of signals with finite rate of innovation in the presence of noise,
{\em IEEE Trans. Signal Processing}, {\bf 53}(2005), 2788--2805.

\bibitem{mvb06} P. Marziliano, M. Vetterli, and T. Blu, Sampling and exact reconstruction of bandlimited
signals with shot noise, {\em IEEE Trans. Inform. Theory}, {\bf 52}(2006),  2230--2233.

\bibitem{mannos74} J. L. Mannos and D. J. Sakrison, The effects of a visual fidelity criterion on the encoding of images, {\em IEEE Trans. Inform. Theory}, {\bf 20}(1974), 525--536.

\bibitem{michaeli11} T. Michaeli and Y. C. Eldar, Xampling at the rate of innovation, {\em IEEE Trans. Signal Processing},
{\bf 60}(2012), 1121--1133.

\bibitem{minty62} G. J. Minty, Monotone (nonlinear) operators in Hilbert space, {\em Duke Math. J.},
{\bf 29}(1962), 341--346.

\bibitem{mjieee08}  N. Motee and A. Jadbabaie,
Optimal control of spatially distributed systems, {\em  IEEE Trans.
 Automatic Control}, {\bf 53}(7)(2008), 1616--1629.

\bibitem{mjieee09} N. Motee and A. Jadbabaie, Distributed multi-parametric quadratic
programming, {\em  IEEE Trans. Automatic Control}, {\bf 54}(10)(2009),  2279--2289.

\bibitem{nashedsun10} M. Z. Nashed and Q. Sun,  Sampling and reconstruction of signals in a reproducing kernel subspace of $L^p(\Rd)$, {\em J. Funct. Anal.}, {\bf 258}(2010), 2422--2452.

\bibitem{nikolski99} N. Nikolski,  In search of the invisible spectrum, {\em  Ann. Inst. Fourier (Grenoble)},  {\bf 49}(1999), 1925--1998.

\bibitem{Pejovic95} F. Pejovic and D. Maksimovic, A new algorithm for simulation of power electronic systems using piecewise-linear device models,
{\em IEEE Trans. Power Electron.}, {\bf 10}(1995), 340--348.

\bibitem{rieffel10}  M. A. Rieffel, Leibniz seminorms for ``matrix algebras converge to the sphere". In {\em Quanta of maths,
volume 11 of Clay Math. Proc.},  Amer. Math. Soc., Providence, RI, 2010. pages 543--578.

\bibitem{sandberg94} I. W. Sandberg, Notes on PQ theorems, {\em IEEE Trans. Circuit and Systems-I: Fundamental Theory and Applications}, {\bf 41}(1994),  303--307.

\bibitem{Shaf04} K. Shafique and M. Shah, Estimation of radiometric response function of a color camera from differently illuminated images, In {\em Proc. Int. Conf. Image Process.}, 2004, 2339--2342.

\bibitem{shincjfa09} C. E. Shin and Q. Sun,
 Stability of localized operators, {\em J. Funct. Anal.}, {\bf 256}(2009), 2417--2439.

\bibitem{sd07} P. Shukla and P. L.  Dragotti,
Sampling schemes for multidimensional signals with finite rate of innovation,
{\em IEEE Trans. Signal Processing}, {\bf 55}(2007), 3670--3686.

\bibitem{sjostrand94} J. Sj\"ostrand, Wiener type algebra of
pseudodifferential operators, Centre de Mathematiques, Ecole
Polytechnique, Palaiseau France, Seminaire 1994--1995, December
1994.

\bibitem{suncasp05} Q. Sun, Wiener's lemma for infinite matrices with polynomial off-diagonal decay, {\em C. Acad. Sci. Paris Ser I}, {\bf 340}(2005), 567--570.

\bibitem{sunsiam06} Q. Sun,  Non-uniform sampling and reconstruction  for signals with  finite rate of
innovations,  {\em SIAM J. Math. Anal.}, {\bf 38}(2006),
1389--1422.

\bibitem{suntams07} Q. Sun, Wiener's lemma for infinite matrices,
{\em Trans. Amer. Math. Soc.},  {\bf 359}(2007), 3099--3123.

    \bibitem{sunaicm08} Q. Sun,  Frames in spaces with finite rate of
innovations, {\em Adv. Comput. Math.},  {\bf 28}(2008), 301--329.

\bibitem{sunaicm10} Q. Sun, Local reconstruction for sampling in shift-invariant spaces, {\em  Adv. Comput. Math.}, {\bf 32}(2010), 335--352.

\bibitem{sunca11} Q. Sun, Wiener's lemma for infinite matrices II, {\em Constr. Approx.},  {\bf 34}(2011), 209--235.

\bibitem{tao05} T. Tao, A quantitative proof of Wiener's theorem, http://www.math.ucla.edu/\~{}tao/preprints/harmonic.html, 2005

\bibitem{unsersurvey} M. Unser, Sampling -- 50
years after Shannon, {\em Proceedings of the IEEE},  {\bf
88}(2000), 569--587.

\bibitem{vetterli02} M. Vetterli, P. Marziliano, and T. Blu, Sampling signals
with finite rate of innovation, \emph{IEEE Trans. Signal Proc.},
{\bf 50}(2002), 1417--1428.

\bibitem{wang09} Z. Wang and A. C. Bovik, Mean squared error: love it or leave it?- A new look at signal fidelity measures,
{\em  IEEE Signal Processing Magazine}, {\bf 98}(2009), 98--117.

\bibitem{wiener} N. Wiener,  Tauberian Theorem, {\em Ann. Math.}, {\bf 33}(1932), 1--100.

\bibitem{nonlinearbook} E. Zeidler,  {\em
  Nonlinear Functional Analysis and its Applications}, Vol. 1--5, Springer-Verlag.

\end {thebibliography}
\end{document}